\documentclass[11pt]{article}
\title{\textsc{Notions of maximality for integral lattice-free polyhedra:\\ the case of dimension three}}
\author{Gennadiy Averkov\footnote{Faculty of Mathematics, University of Magdeburg, Universit\"atsplatz 2, 39106 Magdeburg, Germany.
Emails: averkov@ovgu.de, jan.kruempelmann@ovgu.de, weltge@ovgu.de} \qquad Jan Kr\"umpelmann$^{\ast}$ \qquad Stefan Weltge$^{\ast}$}

\oddsidemargin=0pt
\headheight=0pt
\textheight=620pt
\marginparsep=10pt
\footskip=30pt
\hoffset=20pt
\paperwidth=614pt
\topmargin=0pt
\headsep=0pt
\textwidth=419pt
\marginparwidth=49pt
\marginparpush=5pt
\voffset=0pt
\paperheight=794pt

\usepackage{etex}
\usepackage{graphicx,subfigure}
\usepackage{pstricks}
\usepackage{pst-all,pst-eps}
\usepackage{tikz}
\usepackage{todonotes}
\usepackage{diagbox}
\usepackage{array}
\usepackage{url}
\usepackage{paralist}
\usepackage[boxed,linesnumbered]{algorithm2e}

\DeclareFixedFont{\ttb}{T1}{txtt}{bx}{n}{9} 
\DeclareFixedFont{\ttm}{T1}{txtt}{m}{n}{9}  
\DeclareFixedFont{\tti}{T1}{txtt}{m}{it}{9}  

\usepackage[procnames]{listings}

\usepackage{pifont}

\usepackage{amsmath,amsthm,amssymb,enumerate}
\usepackage{enumitem}
\setlength\parindent{20pt} 
\setlength{\parskip}{0pt} 

\newcommand{\rmcmd}[1]{\mathop{\mathrm{#1}}\nolimits}
\newcommand{\aff}{\rmcmd{aff}}

\newcommand{\R}{\mathbb{R}}
\newcommand{\N}{\mathbb{N}}
\newcommand{\Z}{\mathbb{Z}}

\newcommand{\cP}{\mathcal{P}}

\newcommand{\setcond}[2]{\left\{#1 \, : \, #2 \right\}}
\newcommand{\setcondsep}{:}
\newcommand{\sprod}[2]{\left< #1 \, , \, #2 \right>}

\newcommand{\textand}{\text{and}}

\newcommand{\vol}{\rmcmd{vol}}

\newcommand{\ld}[1]{\, \operatorname{ld}(#1)\,}
\newcommand{\lw}[1]{\, \operatorname{lw}(#1)\,}
\newcommand{\width}[1]{\, \operatorname{w}(#1)\,}

\newcommand{\vertset}[1]{\, \operatorname{vert}(#1)}
\newcommand{\conv}{\operatorname{conv}}
\newcommand{\cone}{\operatorname{cone}}
\newcommand{\intrbig}[1]{\operatorname{int}\big(#1\big)}
\newcommand{\intr}[1]{\operatorname{int}(#1)}
\newcommand{\relintrbig}[1]{\operatorname{relint}\big(#1\big)}
\newcommand{\relintr}[1]{\operatorname{relint}(#1)}
\newcommand{\bd}[1]{\operatorname{bd}(#1)}
\newcommand{\relbd}[1]{\operatorname{relbd}(#1)}

\newcommand{\lspan}{\operatorname{lin}}

\newcommand{\oq}[1]{\overline{P}_{#1}}

\newcommand{\cL}{\mathcal{L}}

\newcommand{\tq}{\widetilde{Q}}
\newcommand{\tp}{\widetilde{P_0}}
\newcommand{\fixedLatticeDiameter}{\ell}
\newcommand{\fixedBase}{B}
\newcommand{\fixedApex}{a}
\newcommand{\fixedPyramid}{T}
\newcommand{\candidates}[1]{\operatorname{cand}(#1)}
\newcommand{\safeRegion}[1]{\operatorname{s}(#1)}

\newcommand{\computerFinalSet}{S}
\newcommand{\maximizer}{M}

\newcommand{\computerIntermediateSet}[1]{S_{#1}}

\newtheoremstyle{ourtheoremstyle} 
    {\topsep}                    
    {\topsep}                    
    {\itshape}                   
    {}                           
    {\scshape}                   
    {.}                          
    {.5em}                       
    {}  
\newtheoremstyle{ourremarkstyle} 
    {\topsep}                    
    {\topsep}                    
    {}                   
    {}                           
    {\scshape}                   
    {.}                          
    {.5em}                       
    {}  

\theoremstyle{ourtheoremstyle}
\newtheorem{theorem}{Theorem}

\newtheorem{proposition}[theorem]{Proposition}
\newtheorem{lemma}[theorem]{Lemma}
\newtheorem{corollary}[theorem]{Corollary}

\newtheorem*{question*}{Question}
\theoremstyle{ourremarkstyle}
\newtheorem{remark}[theorem]{Remark}

\usetikzlibrary{calc,3d}
\usetikzlibrary{decorations.pathreplacing}

\AtBeginDocument{%
   \def\MR#1{}
}

\begin{document}
\maketitle

\begin{abstract}
Lattice-free sets (convex subsets of $\R^d$ without interior integer points) and their applications for cutting-plane methods
in mixed-integer optimization have been studied in recent literature. Notably, the family of all integral lattice-free polyhedra
which are not properly contained in another integral lattice-free polyhedron has been of particular interest. We call
these polyhedra $\Z^d$-maximal.

It is known that, for fixed $d$, the family $\Z^d$-maximal integral lattice-free polyhedra is finite up to unimodular equivalence.
In view of possible applications in cutting-plane theory, one would like to have a classification of this family. However, this turns out
to be a challenging task already for small dimensions.

In contrast, the subfamily of all integral lattice-free polyhedra which are not properly contained in any other lattice-free set,
which we call $\R^d$-maximal lattice-free polyhedra, allow a rather simple geometric characterization. 
Hence, the question was raised for which dimensions the notions of $\Z^d$-maximality and $\R^d$-maximality are equivalent. This was 
known to be the case for dimensions one and two.
On the other hand, Nill and Ziegler (2011) showed that for dimension $d \ge 4$, there exist polyhedra which are $\Z^d$-maximal but not $\R^d$-maximal.
In this article, we consider the remaining case $d = 3$ and prove that for integral polyhedra the notions of $\R^3$-maximality and $\Z^3$-maximality are equivalent.
As a consequence, the classification of all $\R^3$-maximal integral polyhedra by Averkov, Wagner and Weismantel (2011)
contains all $\Z^3$-maximal integral polyhedra.
\end{abstract}

\newtheoremstyle{itsemicolon}{}{}{\mdseries\rmfamily}{}{\itshape}{:}{ }{}
\newtheoremstyle{itdot}{}{}{\mdseries\rmfamily}{}{\itshape}{:}{ }{}
\theoremstyle{itdot}
\newtheorem*{msc*}{2010 Mathematics Subject Classification} 

\begin{msc*}
	52B10, 52B20, 52C07, 90C11
\end{msc*}

\newtheorem*{keywords*}{Keywords}

\begin{keywords*}
classification; cutting planes; integral polyhedra; lattice-free sets;
mixed-integer optimization
\end{keywords*}

\section{Introduction}

We call a $ d $-dimensional convex subset $C$ of $\R^d$ \emph{lattice-free} if the interior of $C$ contains no points of
$\Z^d$.
For a subset $X$ of $\R^d$, we call a lattice-free convex set $C$ {\em $X$-maximal} if for every $x \in X \setminus C$, the
set $\conv(C \cup \{x\})$ is not lattice-free.
For $Y \subseteq \R^d$, we say that $P$ is a $Y$-polyhedron if $P = \conv(P \cap Y)$.
A $\Z^d$-polyhedron is also called an {\em integral} polyhedron.
We denote the family of all $Y$-polyhedra by $\cP(Y)$.
In particular, $\cP(\R^d)$ is the family of all polyhedra in $\R^d$ and $\cP(\Z^d)$ is the family of all integral
polyhedra in $\R^d$.

Recently, the family of $\Z^d$-maximal integral lattice-free polyhedra has attracted attention of experts in algebraic
geometry and optimization; see~\cite{Treutlein08,Treutlein10}, \cite{MR2832401} and \cite{MR2855866}.
With a view towards applications in the mentioned research areas, having a better geometric understanding of such
polyhedra is desirable.
Currently, we have a rather good geometric description of $\R^d$-maximality (which is a stronger property than $ \Z^d
$-maximality) for general lattice-free sets; see \cite{MR1114315} and \cite{MR3027668}.
In contrast, so far no simple description of $\Z^d$-maximality is available, even in the case of integral polyhedra.
This has motivated the following question, asked in \cite{MR2832401} and \cite{MR2855866}: is every $\Z^d$-maximal
integral lattice-free polyhedron also $\R^d$-maximal?
This can also be formulated as follows: is it true for every integral lattice-free polyhedron $P$, that if $\conv(P\cup
\{x\})$ is lattice-free for some $x \in \R^d \setminus P$, then $\conv(P \cup \{x\})$ is lattice-free for some $x \in
\Z^d \setminus P$?
The answer to this question is trivially `yes' for dimensions $d=1,2$, while it was shown in~\cite{MR2832401} that the answer is `no' for
each $d \ge 4$.
The purpose of this paper is to settle the case $d=3$, which remained open; see~\cite[Question~1.5]{MR2832401} and \cite[Section 7.4]{wagner_diss}.

\begin{theorem}\label{thm:main_three}
Every $\Z^3$-maximal integral lattice-free polyhedron is also $\R^3$-maximal.
\end{theorem}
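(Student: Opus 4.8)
The plan is to argue the contrapositive and to translate the whole statement into the language of maximal lattice-free sets. Recall that an integral lattice-free polyhedron $P$ is $\R^3$-maximal exactly when it is maximal in the family of lattice-free convex sets ordered by inclusion, and that, by the Lovász and Basu--Conforti--Cornuéjols--Zambelli characterization, a maximal lattice-free set is a polyhedron carrying a point of $\Z^3$ in the relative interior of each of its facets, with at most $2^3=8$ facets. The key observation I would record first is that $\{x : \conv(P\cup\{x\})\text{ is lattice-free}\}$ is precisely the union of all maximal lattice-free polyhedra $M\supseteq P$: if $\conv(P\cup\{x\})$ is lattice-free it extends to some maximal $M\ni x$, and conversely any $x\in M$ gives $\conv(P\cup\{x\})\subseteq M$ lattice-free. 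Consequently $P$ is $\R^3$-maximal iff $P$ itself is maximal, while $P$ is $\Z^3$-maximal iff no maximal $M\supseteq P$ contains a point of $\Z^3\setminus P$. So it suffices to prove: \emph{if the $3$-dimensional (lattice-freeness forces full dimension) integral lattice-free polyhedron $P$ is not maximal, then some maximal lattice-free $M\supseteq P$ contains a lattice point outside $P$}; equivalently, the integral hull $\conv(M\cap\Z^3)$ of a non-integral $\R^3$-maximal lattice-free polyhedron is never $\Z^3$-maximal.

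First I would extract a concrete place to grow. Since $P$ is a lattice-free polyhedron that is not maximal, the characterization fails, so $P$ has a facet $F$ with $\relintr{F}\cap\Z^3=\emptyset$. After a unimodular transformation I assume $\aff(F)=\{x_3=0\}$ and $P\subseteq\{x_3\le 0\}$. The natural candidate extension is a \emph{tent}: choose an integer point $z$ with $z_3=1$ and test whether $\conv(P\cup\{z\})$ is lattice-free. The lattice points that such a tent can enclose beyond those already in $P$ all lie in the thin region $\conv(P\cup\{z\})\setminus P$ adjacent to $F$ and to the apex; since there is no lattice layer at heights strictly between $0$ and $1$, the dominant contribution is the shadow of $P$ cast from the apex onto $\aff(F)$, which is contained in $\conv(\{\pi(z)\}\cup\pi(P))$ for the projection $\pi$ forgetting $x_3$. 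Thus the task reduces to placing $z$ so that this shadow, together with the sideways contributions, meets $\Z^3$ only inside $F$, where by construction $\relintr{F}$ is lattice-free.

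The main obstacle is controlling this shadow. For a badly placed apex the shadow strictly exceeds $F$ and may lift a lattice point of $\pi(P)$ into the interior of the tent, so a single naive choice of $z$ need not succeed, and one cannot expect every non-maximal $P$ to grow straight across $F$. This is exactly where dimension three is essential: the analogous construction must fail for $d\ge 4$, in accordance with the Nill--Ziegler examples of $\Z^d$-maximal but not $\R^d$-maximal polyhedra, and our claim is precisely that such proper integral hulls do not occur when $d=3$. I would resolve the obstacle by exploiting the finiteness forced by flatness in $\R^3$: lattice-free bodies have bounded lattice width, so the relevant maximal lattice-free polyhedra $M$ realize only finitely many combinatorial types --- bounded polyhedra with at most eight facets, together with cylinders and prisms over $2$-dimensional maximal lattice-free sets --- after which the continuous metric parameters can be reduced to a finite range by monotonicity of the relevant widths. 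Splitting according to the recession cone of $M$ (full lineality, a single ray, or bounded), the unbounded cases descend to the two-dimensional situation, where $\Z^2$-maximality and $\R^2$-maximality coincide, and the bounded cases reduce to a finite list of candidate pyramids and simplices.

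Finally, for each remaining candidate I would verify directly --- reducing to a finite, in places computer-assisted, enumeration over the bounded parameters and over admissible integer apices $z$ in the lattice layers adjacent to the facets of $M$ --- that whenever $\conv(M\cap\Z^3)$ is a proper subset of $M$, some admissible apex yields a lattice-free tent; equivalently, that $M$ itself, or a sibling maximal extension sharing the facet $F$, already contains a lattice point outside its integral hull. Matching the resulting proper integral hulls against the Averkov--Wagner--Weismantel classification of $\R^3$-maximal integral polyhedra then shows that none of them is $\Z^3$-maximal, which is the desired contrapositive and completes the proof.
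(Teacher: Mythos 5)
Your reformulation is sound: the set of points $x$ with $\conv(P\cup\{x\})$ lattice-free is indeed the union of the maximal lattice-free sets containing $P$, so the theorem is equivalent to the assertion that for every integral lattice-free $P$ that is not $\R^3$-maximal some maximal $M\supseteq P$ carries a lattice point outside $P$. But this is only a restatement of the problem, and the plan you give for actually proving it has a genuine gap at its core. You propose to enumerate the containing maximal lattice-free sets $M$, claiming that they ``realize only finitely many combinatorial types'' and that ``the continuous metric parameters can be reduced to a finite range by monotonicity of the relevant widths.'' No argument is given for this reduction, and none is easy: already in dimension two the maximal lattice-free triangles and quadrilaterals form genuinely continuous families (the classes $\cL_2,\dots,\cL_5$ of Theorem~\ref{thm:dey_wolsey}), and in dimension three no usable explicit classification of all maximal lattice-free convex bodies is available. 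The paper deliberately avoids this route: finiteness is obtained by bounding the \emph{integral polytope $P$ itself} via geometry of numbers --- for $\lw{P}\ge 3$ one combines Minkowski's theorems with the Kannan--Lov\'asz and Hurkens inequalities on covering minima to get $\lambda_1(P-P)>\tfrac14$, $\vol(P)\le 27$ and $\ld{P}\le 3$, which makes a direct computer enumeration of the candidates $P$ possible --- while the case $\lw{P}=2$ is settled by a structural analysis of the cross-sections $P_0,P_{\pm1}$ through the half-integral two-dimensional classification. Your split by the recession cone of $M$ does not substitute for these bounds, and your ``tent'' construction is, as you yourself note, not guaranteed to succeed for any single apex, so it cannot carry the argument without the missing finiteness.

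A second, smaller problem is the concluding step. Matching the proper integral hulls $\conv(M\cap\Z^3)$ against the Averkov--Wagner--Weismantel list of $\R^3$-maximal integral polyhedra cannot show that they fail to be $\Z^3$-maximal: those hulls are by construction not $\R^3$-maximal, so their absence from that list is automatic, and inferring non-$\Z^3$-maximality from it presupposes exactly the theorem you are trying to prove. To rule out a candidate you must exhibit a concrete integral point whose addition preserves lattice-freeness (or, as in the paper, a larger integral lattice-free polyhedron containing it), which is what the paper's case analysis and computer search actually do.
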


The interest in the family of lattice-free integral polyhedra, and especially those that are $\Z^d$-maximal, was raised
by potential applications in mixed-integer optimization, more precisely in cutting-plane theory.
It is known that lattice-free sets can be used in mixed-integer programming for describing various families of cutting planes. 
Consider the set $P \cap (\Z^d \times \R^n)$ of feasible solutions of an arbitrary mixed-integer linear program, where $P \subseteq \R^{d+n}$ is a 
rational polyhedron, $d \in \N$ is the number of integer variables and $n \in \N \cup \{0\}$ is the number of continuous variables. 
Given a lattice-free set $L \subseteq \R^d$,
we call a closed halfspace $H$ of $\R^{d+n}$ an \emph{$L$-cut} for the polyhedron $P$ if $P \setminus \intr{L} \times \R^n \subseteq H$. 
Furthermore, given a family $\cL$ of lattice-free subsets of $\R^d$, we say that $H$ is an {\em $\cL$-cut} if $H$ is an $L$-cut for some $L \in \cL$. 

\newcommand{\MI}{{\operatorname{MI}}}

In the theory of general mixed-integer linear programs, one is interested in explicit descriptions of families $\cL$ being possibly simple and small on the one hand, and generating strong cuts on the other hand. The strength of $\cL$-cuts can be represented in terms of the sequence of $\cL$-closures, defined as follows. For a rational polyhedron $P \subseteq \R^{d+n}$, we define the \emph{$\cL$-closure} of $P$ by $c_\cL(P) = \bigcap \setcond {P \cap H}{H \ \text{is an $\cL$-cut}}$. For a nonnegative integer $k$, the \emph{$k$-th $\cL$-closure} $c_\cL^k(P)$ of $P$ is defined recursively by  $c_\cL^{k}(P)=c_\cL(c_\cL^{k-1}(P))$ for $k \in \N$ and $c_\cL^0(P)=P$. Clearly, $c_\cL^k(P)$ contains the \emph{mixed-integer hull} $P_\MI:= \conv(P \cap (\Z^d \times \R^n))$ of $P$ as a subset. 

The strength of $\cL$-cuts can be formalized by describing how well $P_{\MI}$ is approximated by the sequence $\bigl(c_\cL^k(P)\bigr)_{k=0}^{\infty}$. The strongest form of approximation is expressed as finite convergence to the mixed-integer hull. In the case of finite convergence, $c_\cL^k(P)$ coincides with $P_{\MI}$ if $k$ is large enough (where the choice of $k$ depends on $P$ in general). If the latter holds for every rational polyhedron $P \subseteq \R^{d+n}$, we say that $\cL$ has finite \emph{convergence property}. Note that, a priori, the finite convergence property depends not only on $\cL$ but also on the number of continuous variables $n$. 

As an example, the family of split sets that corresponds to the classical split cuts has finite convergence property for $n=0$ (this follows from
the fact that the well-known Gomory cuts are a subclass of split cuts and one has finite convergence for Gomory cuts; see e.g. \cite[\S 23]{MR874114}) 
but does not have finite convergence property for every $n>0$ (see \cite{MR1059391}). 
Thus, more complicated lattice-free sets have to be used for achieving finite convergence in the mixed-integer setting. Del Pia and Weismantel showed in \cite{MR2968262} that the family of all integral lattice-free polyhedra has finite convergence property for every $n$. Since, for a general $d$, the family of all integral lattice-free polyhedra is complicated and hard to describe explicitly, having a smaller family of lattice-free polyhedra with finite convergence property for every $n$ is desirable. It turns out that every integral lattice-free polyhedron is a subset of a $\Z^d$-maximal integral lattice-free polyhedron. This is stated without proof in \cite{MR2968262} and can be proven using results in \cite{MR2832401} or arguments from \cite{MR2855866}. Consequently, the family of all $\Z^d$-maximal integral lattice-free polyhedra has finite convergence property for every $n$. Furthermore, no proper subfamily of the latter family has finite convergence when $n > 0$, 
as follows from results of \cite{MR2931286}. In view of the above comments, it would be interesting to have an explicit description of all $\Z^d$-maximal integral lattice-free polyhedra. 

In \cite{MR2855866} and \cite{MR2832401} it was shown that, there exist finitely many polyhedra $L_1,\ldots,L_N$ with $N$ depending on $d$ such that every $\Z^d$-maximal integral lattice-free polyhedron $L$ is an image $\phi(L_i)$ of some $L_i$, $i \in \{1,\ldots,N\}$, under an affine transformation $\phi$ satisfying $\phi(\Z^d)=\Z^d$. Affine transformations $\phi$ of $\R^d$ satisfying $\phi(\Z^d) = \Z^d$ are called \emph{unimodular}, while subsets $A, B$ of $\R^d$ satisfying $\phi(A) = B$ for some unimodular transformation $\phi$ are said to be \emph{unimodularly equivalent} (we write $A \simeq B$ for sets $A,B$ which are unimodularly equivalent). 
Thus, the family of $\Z^d$-maximal integral lattice-free polyhedra is finite up to unimodular equivalence. The problem of enumerating this (essentially finite) family for small dimensions was raised in \cite{MR2855866}. For dimensions $d=1$ and $d=2$ enumeration can be carried out easily. 
Already, for dimension $d=3$, this is a challenging problem which was only partially addressed in \cite{MR2855866}.
Using properties of $\R^d$-maximal lattice-free sets, the authors of \cite{MR2855866} were able to enumerate the family of all $\R^3$-maximal integral 
lattice-free polyhedra. While, a priori, this is only a subfamily of the family of $\Z^3$-maximal integral lattice-free polyhedra,
our Theorem~\ref{thm:main_three}, shows that the classification given in \cite{MR2855866} also provides an enumeration of $\Z^3$-maximal integral lattice-free polyhedra.

\begin{corollary}
 Up to unimodular equivalence, there are exactly twelve bounded $\Z^3$-maximal integral lattice-free polyhedra 
(see Figures~\ref{fig:s1d3_width_two} and \ref{fig:s1d3_width_three})
and two unbounded ones, namely $[0,1] \times \R^2$ and $\conv((0,0), (2,0), (0,2)) \times \R$.
\end{corollary}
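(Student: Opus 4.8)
The plan is to deduce the corollary from Theorem~\ref{thm:main_three} together with the enumeration of $\R^3$-maximal integral lattice-free polyhedra obtained in \cite{MR2855866}. The bridge between the two is the observation that, for integral lattice-free polyhedra in dimension three, $\Z^3$-maximality and $\R^3$-maximality describe the \emph{same} family.

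First I would record the easy inclusion: every $\R^3$-maximal lattice-free set is $\Z^3$-maximal. This is immediate from the definition of $X$-maximality, since $\Z^3 \setminus C \subseteq \R^3 \setminus C$ for any set $C$; if $\conv(C \cup \{x\})$ fails to be lattice-free for all $x \in \R^3 \setminus C$, then in particular it fails for all $x \in \Z^3 \setminus C$. (This is exactly the assertion, already used above, that $\R^3$-maximality is the stronger property.) Theorem~\ref{thm:main_three} supplies the reverse implication for integral lattice-free polyhedra, so an integral lattice-free polyhedron is $\Z^3$-maximal if and only if it is $\R^3$-maximal. Consequently, the family of $\Z^3$-maximal integral lattice-free polyhedra and the family of $\R^3$-maximal integral lattice-free polyhedra coincide.

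It then remains only to quote the classification of the latter family. In \cite{MR2855866} it is shown that, up to unimodular equivalence, the $\R^3$-maximal integral lattice-free polyhedra are precisely the twelve bounded ones depicted in Figures~\ref{fig:s1d3_width_two} and~\ref{fig:s1d3_width_three} and the two unbounded ones $[0,1] \times \R^2$ and $\conv((0,0), (2,0), (0,2)) \times \R$. Transporting this list through the equivalence established above gives the stated enumeration. I do not expect a genuine obstacle here: the substantive work is entirely contained in Theorem~\ref{thm:main_three} and in the cited enumeration, and the only point demanding care is that the word ``exactly'' requires both completeness and nonredundancy of the list, both of which are guaranteed by \cite{MR2855866}.
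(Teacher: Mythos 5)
Your argument is correct and coincides with the paper's (implicit) justification: the paper likewise derives the corollary by combining Theorem~\ref{thm:main_three} with the trivial implication that $\R^3$-maximality is stronger than $\Z^3$-maximality, so that the two families of integral lattice-free polyhedra coincide, and then invoking the enumeration of $\R^3$-maximal integral lattice-free polyhedra from \cite{MR2855866}. No gaps.
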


\begin{figure}[h!]
    \begin{center}
        \begin{tabular}{ccc}
            $M_{4,6}$ &  $M_{4,4}$ & $M_{4,2}$ \\
            \qquad \includegraphics[scale=0.30]{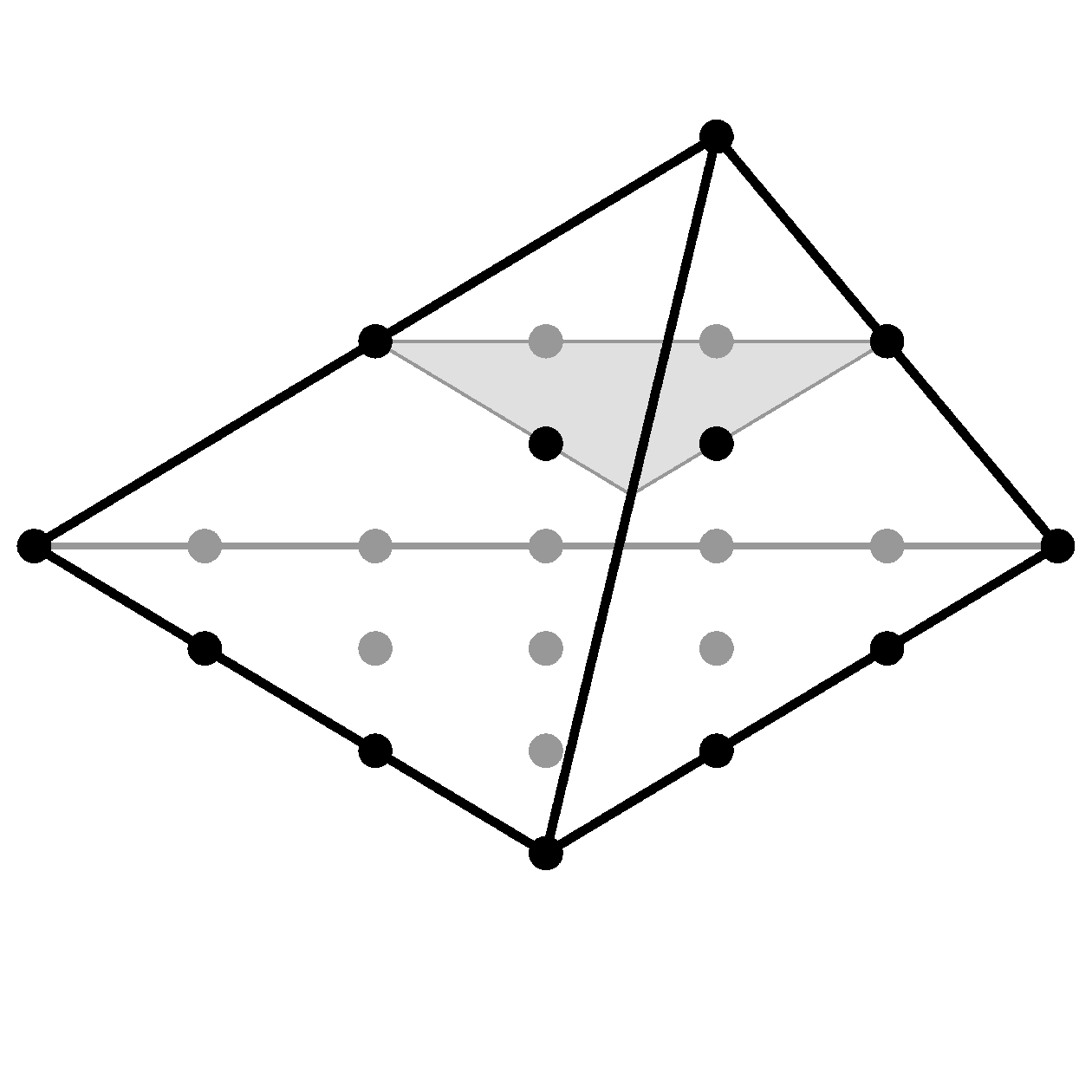} \qquad
            & \qquad \includegraphics[scale=0.30]{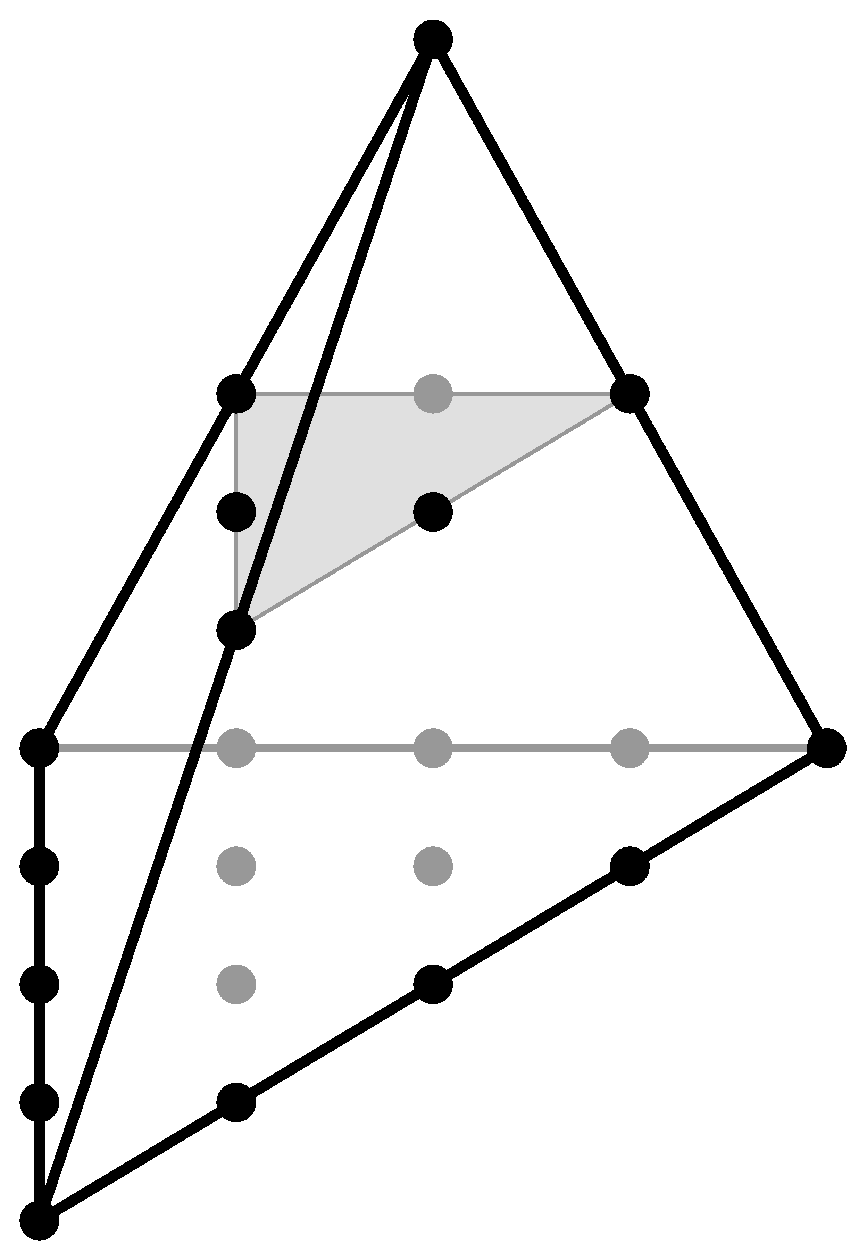} \qquad
            & \qquad \includegraphics[scale=0.30]{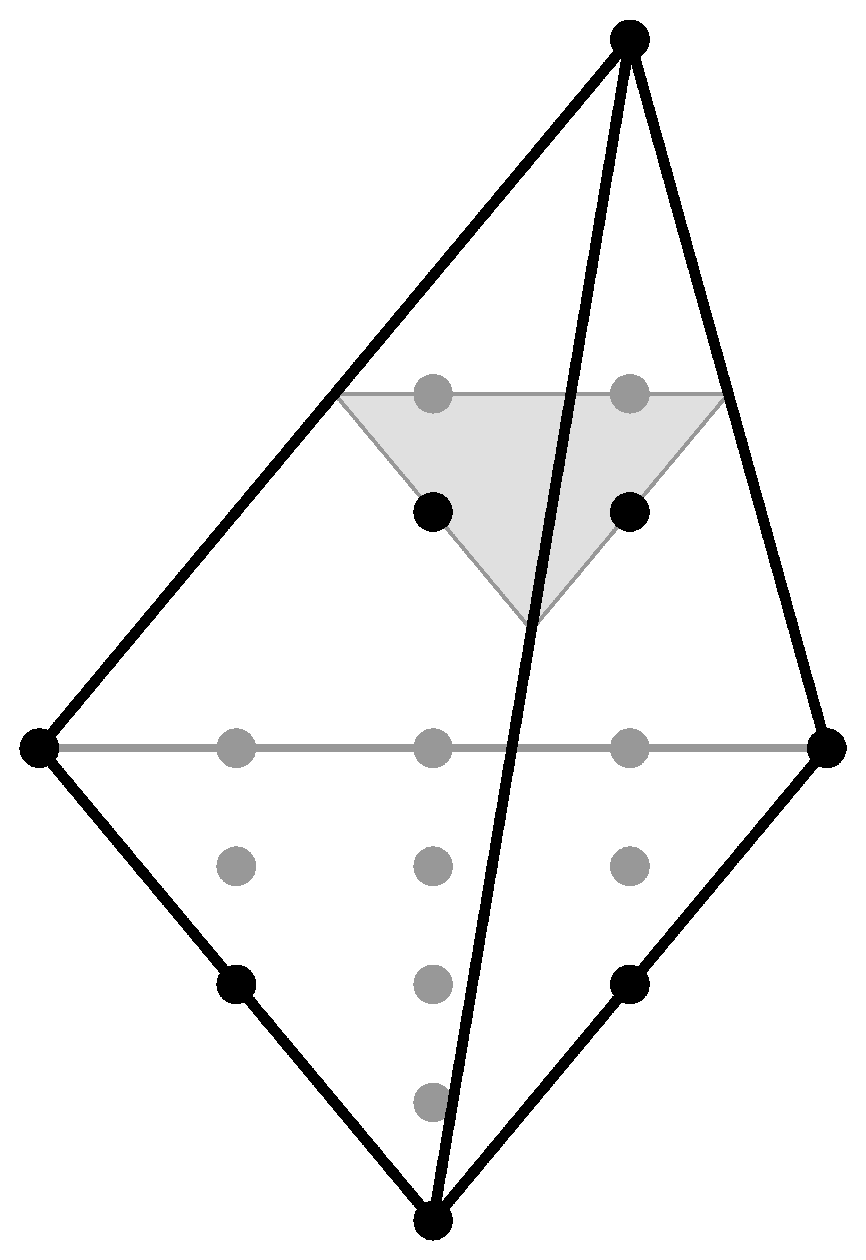} \qquad \\
        \end{tabular}
        \begin{tabular}{cccc}
            $M'_{4,4}$ &  $M_{5,4}$ & $M_{5,2}$ & $M_{6,2}$ \\
            \qquad \includegraphics[scale=0.30]{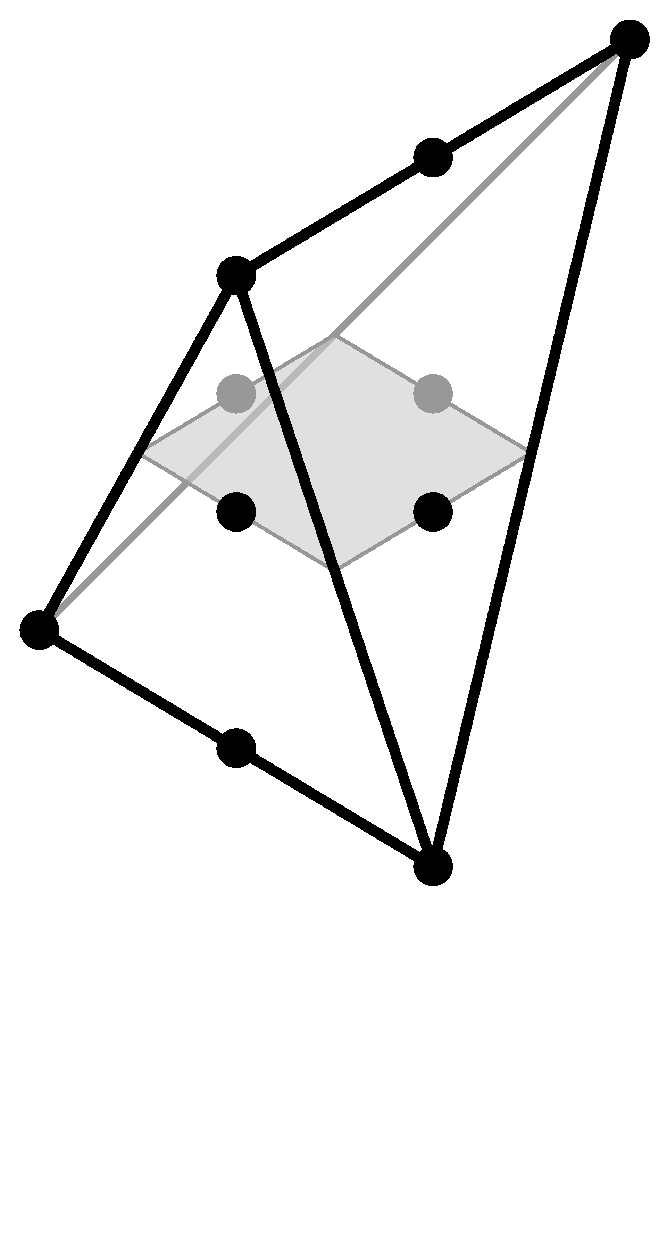} \qquad
            & \qquad \includegraphics[scale=0.30]{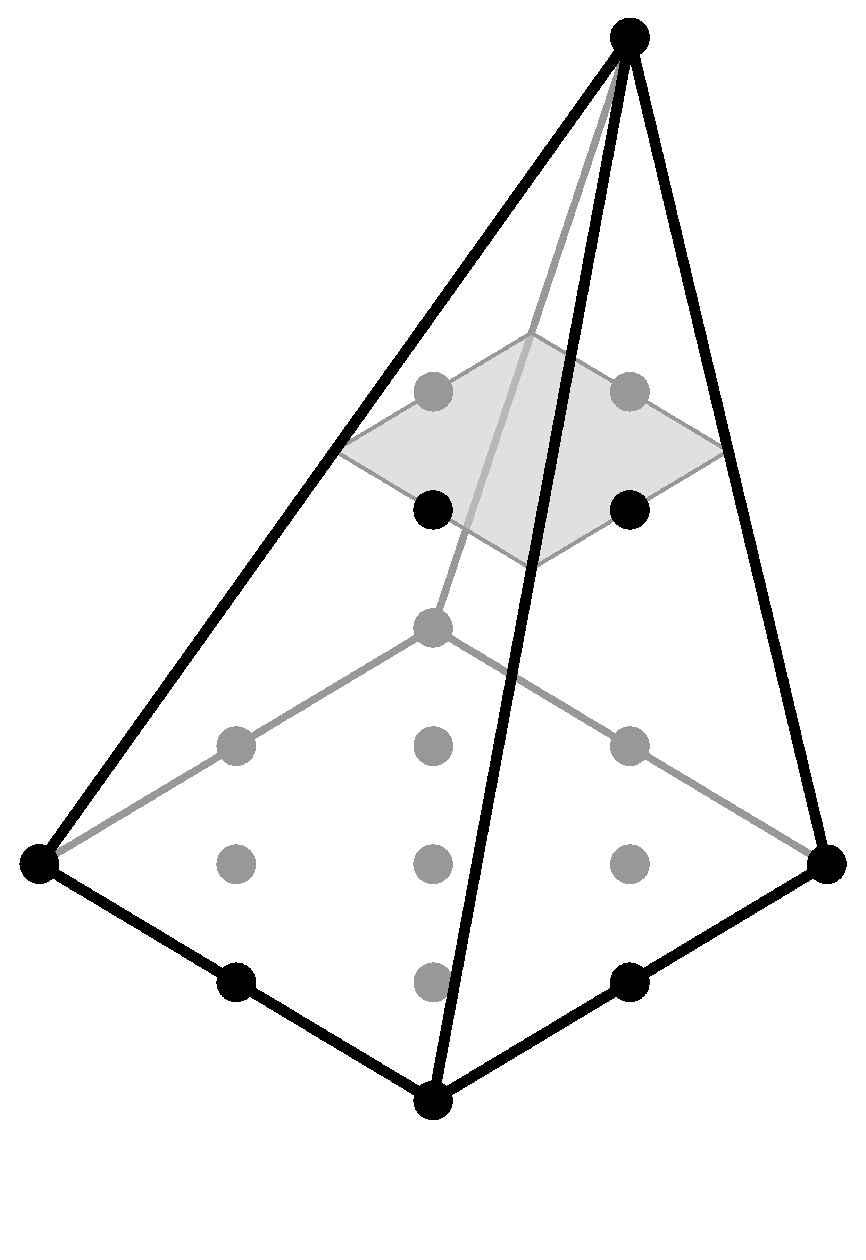} \qquad
            & \qquad \includegraphics[scale=0.30]{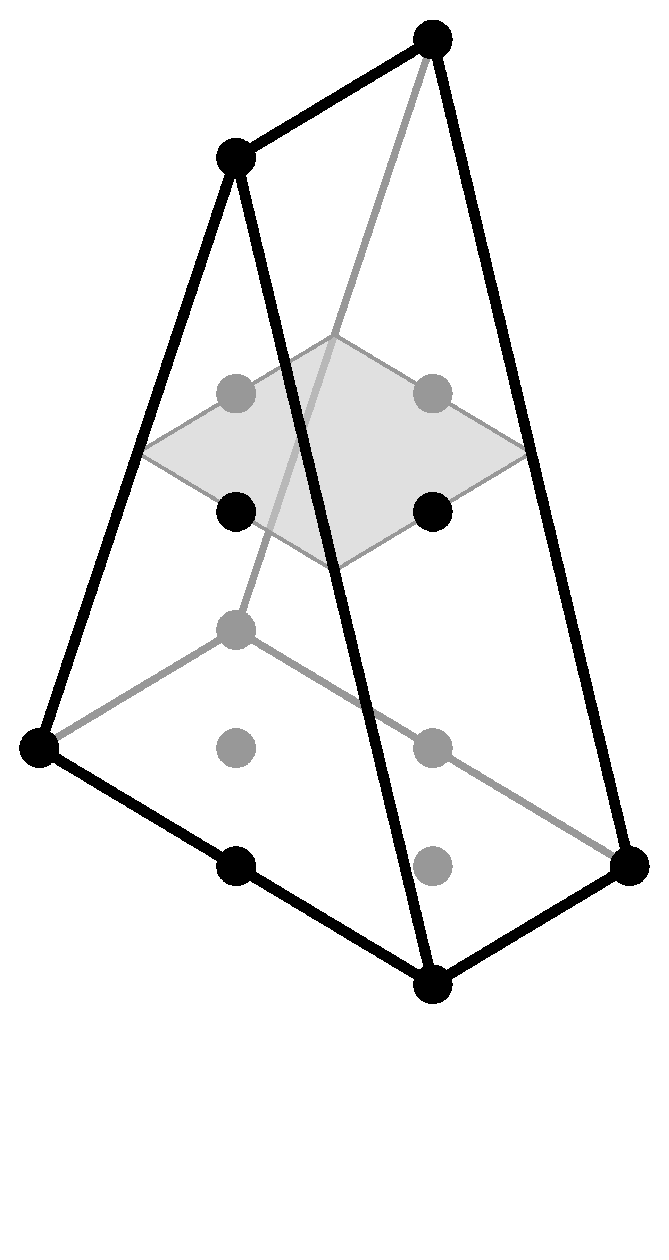} \qquad
            & \qquad \includegraphics[scale=0.30]{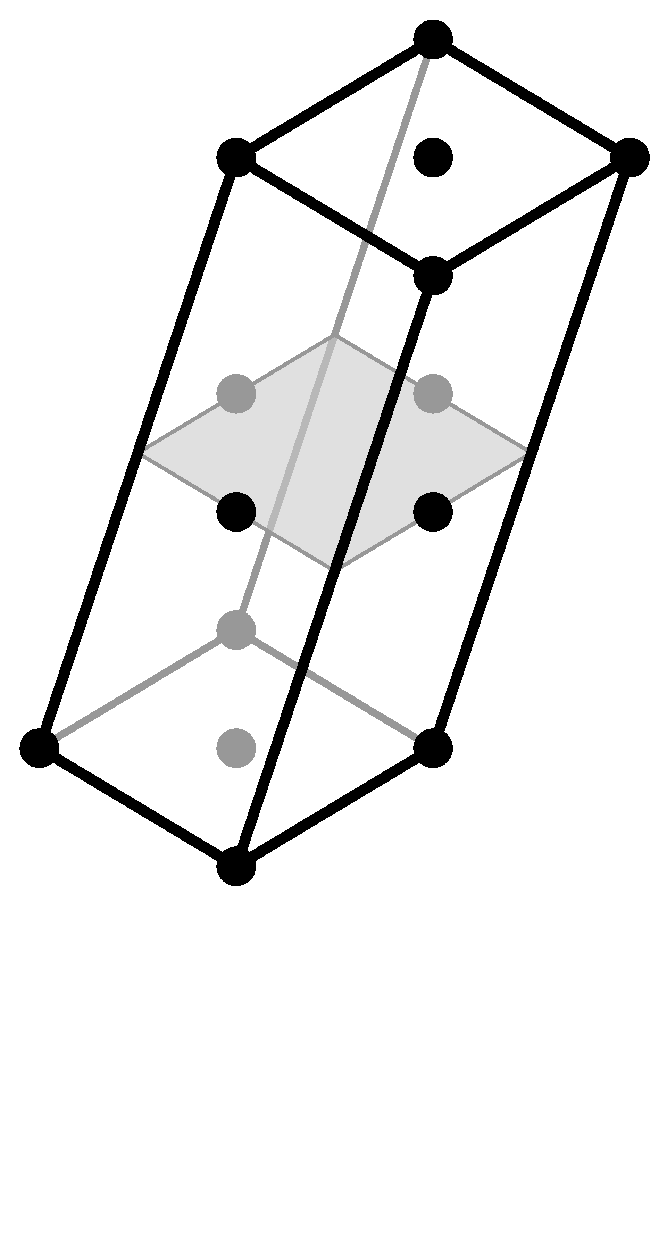} \qquad
        \end{tabular}
    \end{center}
    \caption{The $\Z^3$-maximal integral lattice-free polytopes with lattice width two. For further reference, the
    polytopes are labeled by a pair of indices $(i,j)$, where $i$ is the number of facets and $j$ the lattice diameter
    (defined at the end of the introduction).}
    \label{fig:s1d3_width_two}
\end{figure}

\begin{figure}[h!]
    \begin{center}
        \includegraphics[scale=0.30]{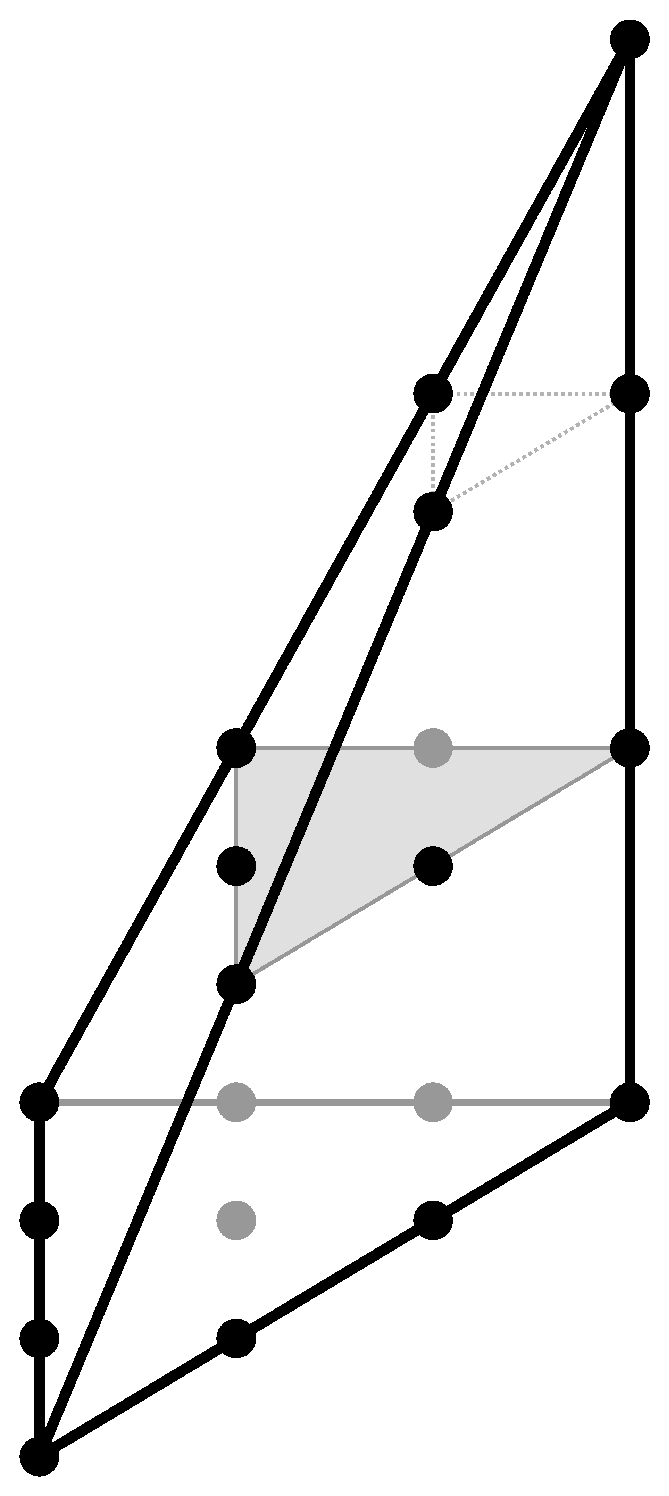}
        \quad \includegraphics[scale=0.30]{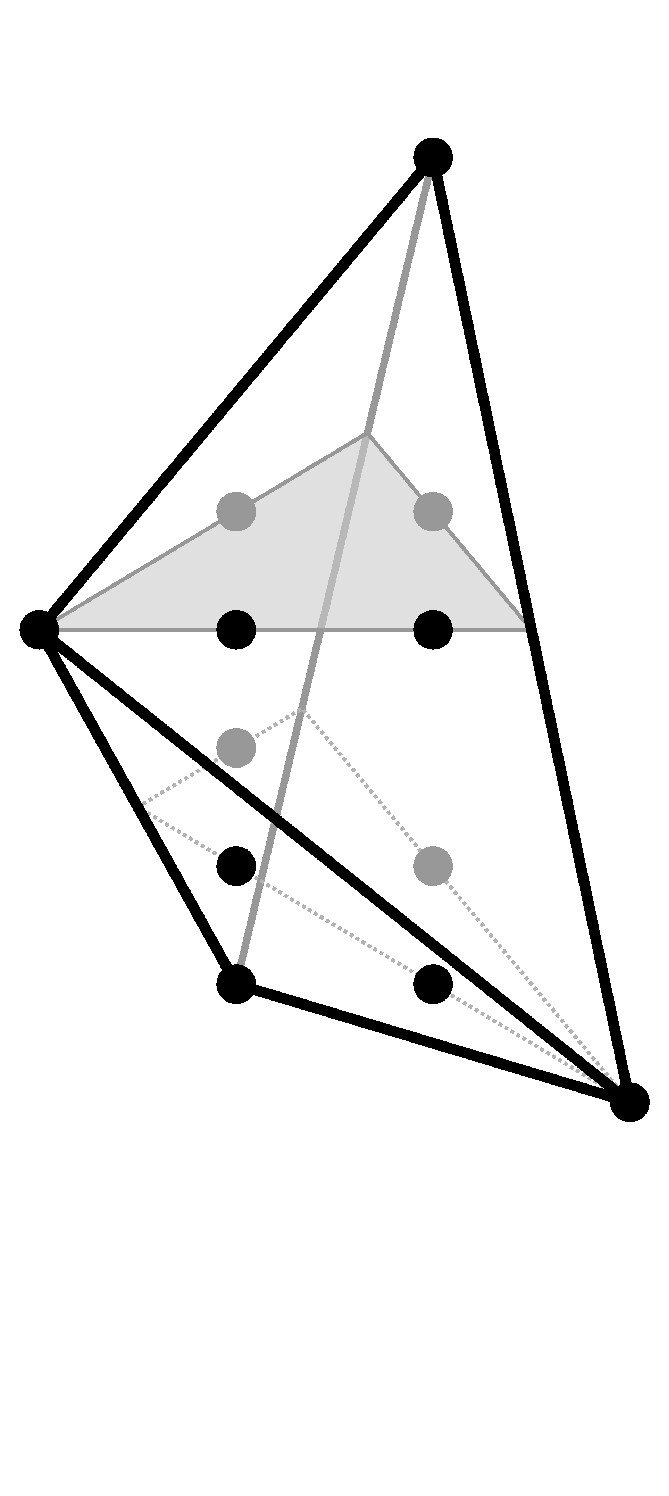}
        \quad \includegraphics[scale=0.30]{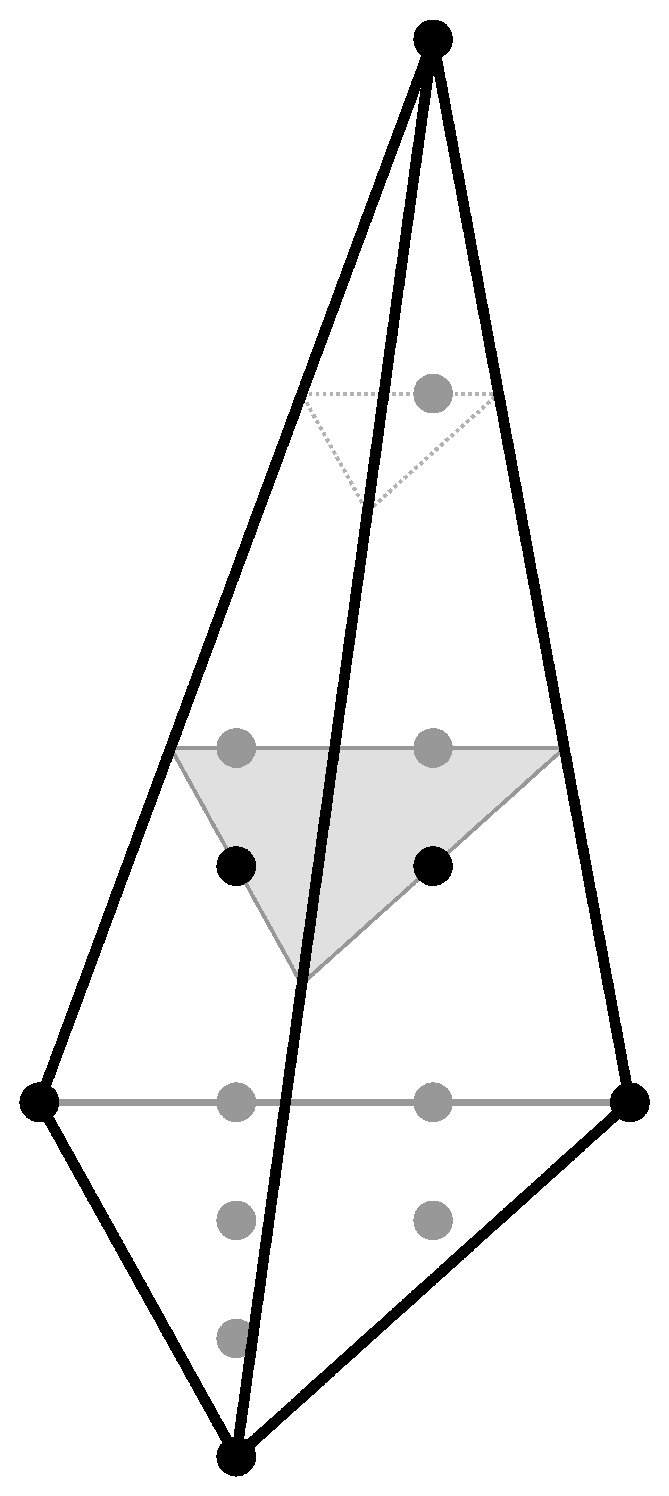}
        \quad \includegraphics[scale=0.30]{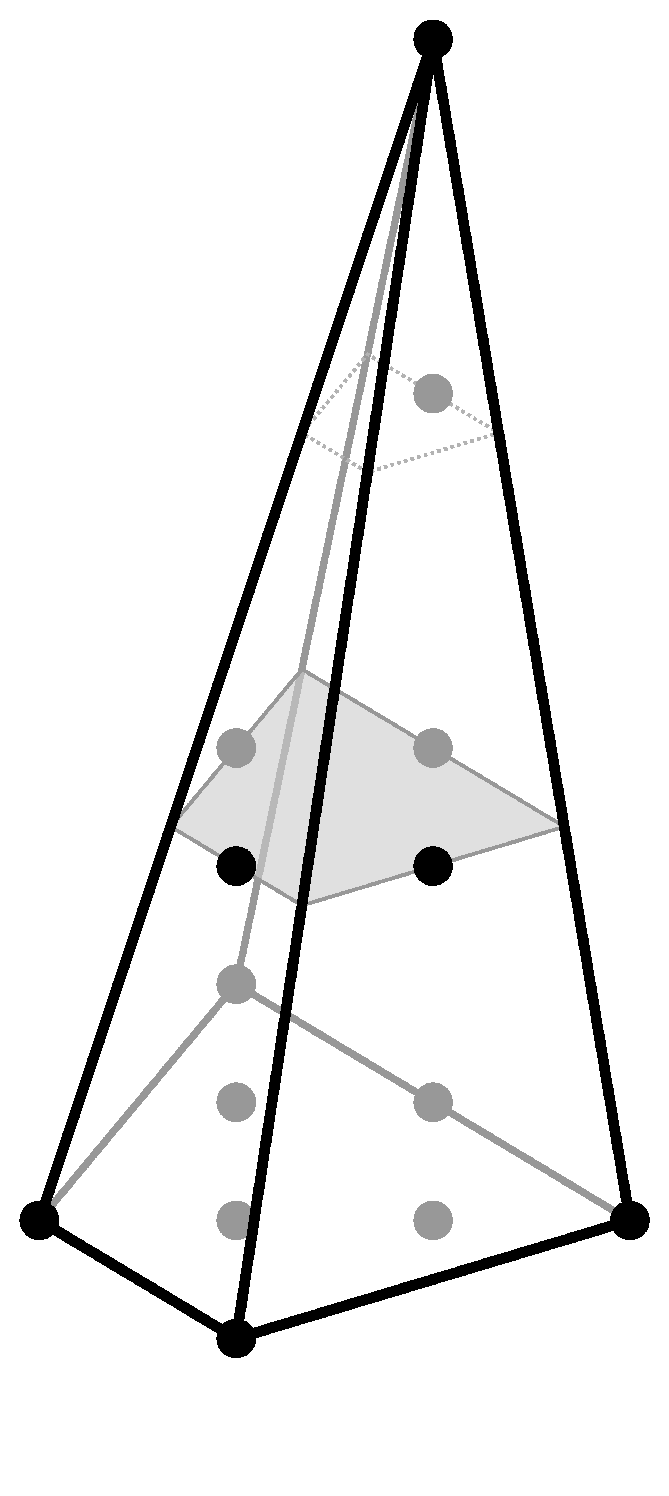}
        \quad \includegraphics[scale=0.30]{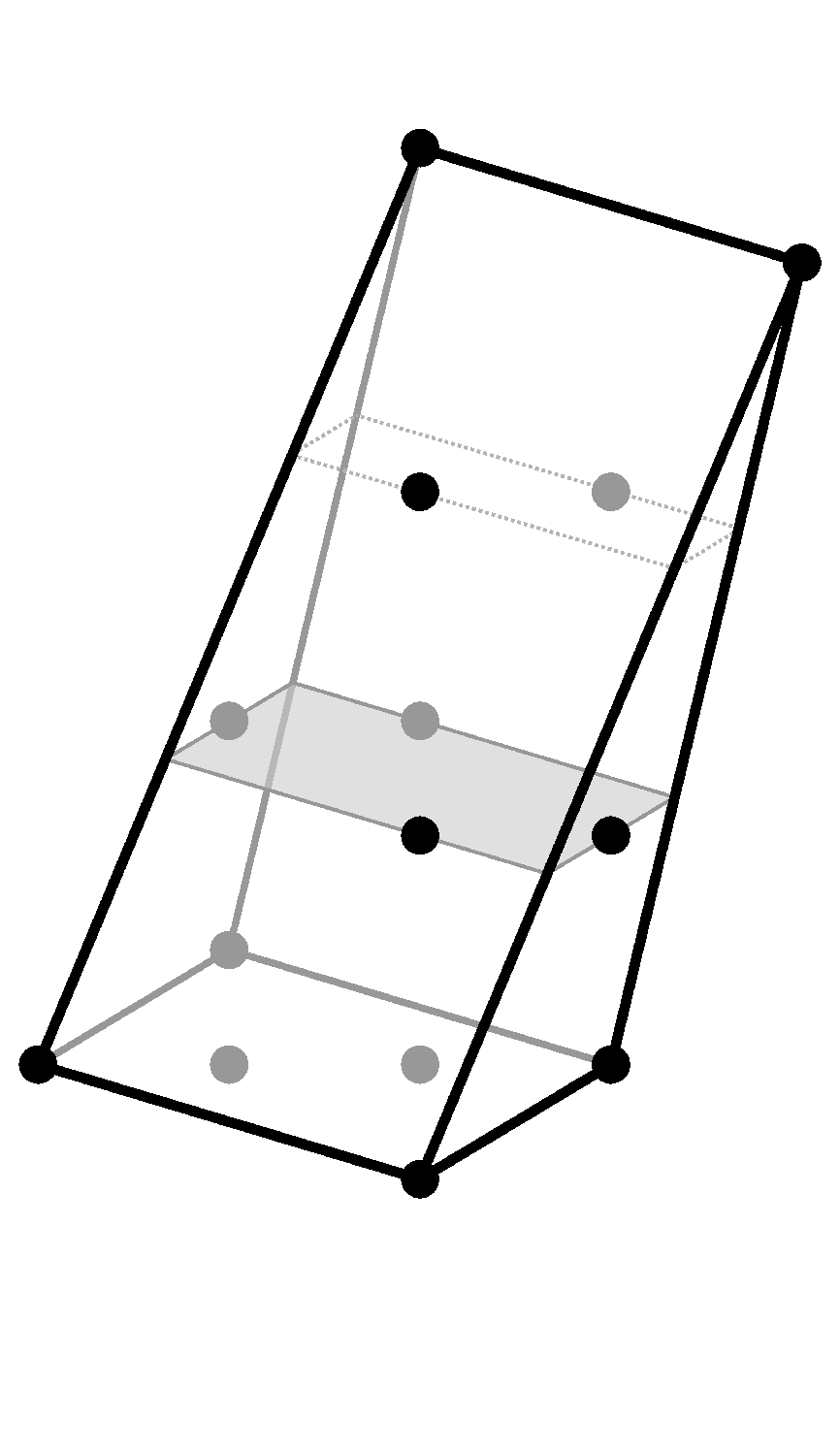}
    \end{center}
    \caption{The $\Z^3$-maximal integral lattice-free polytopes with lattice width three.}
    \label{fig:s1d3_width_three}
\end{figure}

We remark that the main property of $\R^d$-maximal lattice-free polyhedra which was exploited in the classification provided in
\cite{MR2855866} is the following: a lattice-free polyhedron $P$ is $\R^d$-maximal if and only if each of its facets contains an integral point in its
relative interior; see \cite{MR1114315}. We call a facet with the latter property {\em blocked} and we will use this characterization of $\R^d$-maximality repeatedly throughout
this article.
 
\subsection*{Proof strategy}

In the proof of Theorem~\ref{thm:main_three}, we use a classification of all $\Z^2$-maximal polytopes in $\cP(\frac{1}{2}\Z^d)$. This is provided in Section~\ref{sec:s2d2}. Every such polytope is contained in an $\R^2$-maximal lattice-free convex set $L$ in the
plane and its vertices then have to be contained in $L \cap \frac{1}{2}\Z^2$. We give a slightly extended version of the well-known classification of 
$\R^2$-maximal lattice-free convex sets $L$
which allows us to enumerate all $\Z^2$-maximal lattice-free $\frac{1}{2}\Z^2$-polyhedra.

We then turn to integral $\Z^3$-maximal lattice-free polyhedra in dimension three. 
We can restrict ourselves to polytopes here since every unbounded $\Z^3$-maximal lattice-free polyhedron
in $\cP(\Z^3)$ is unimodularly equivalent to $[0,1] \times \R^2$ or $\conv(o, 2e_1,2e_2) \times \R$ in view of \cite[Proposition 3.1]{MR2855866}.
The $\R^3$-maximality of those two sets is easy to see.
Furthermore, one can easily observe that a lattice-free polytope in $\cP(\Z^3)$ with lattice width one (see page~\pageref{page:lw}) cannot be $\Z^3$-maximal, since 
any such polytope is properly contained in an unbounded lattice-free integral polyhedron formed by two adjacent parallel lattice hyperplanes of $\Z^3$. 
Therefore,
it suffices to consider only the polytopes with lattice width at least two, since the lattice width of an integral polytope is integral. 
We distinguish two cases: polytopes with lattice width two and polytopes
with lattice width at least three. 
In the case of lattice width two, 
we first analyze $\Z^d$-maximal lattice-free integral polytopes with this lattice width in general dimension. 
Assuming a $\Z^d$-maximal lattice-free polytope $P$ to be contained in $\R^{d-1} \times [-1,1]$,
we deduce properties of $P_0 = \setcond{x \in \R^{d-1}}{(x,0) \in P}$ and on the faces of $P$ contained in $\R^2 \times \{-1\}$
and $\R^2 \times \{1\}$, respectively. In particular, we obtain that $P_0$ is a $\Z^{d-1}$-maximal
$\frac{1}{2}\Z^{d-1}$-polytope.
Narrowing the focus to the case of dimension three, we then make use of the fact that
$P_0$ has to be one of the four $\Z^2$-maximal lattice-free $\frac{1}{2}\Z^2$-polytopes which we enumerated in Section~\ref{sec:s2d2}. 
Based on that, we classify all $\Z^3$-maximal polytopes in $\cP(\Z^3)$ which have lattice width two. It turns out 
that every such polytope is also $\R^3$-maximal. 

In Section~\ref{sec:lw_three}, we complete the proof of Theorem~\ref{thm:main_three} by a computer search. 
To this end, we 
develop an algorithm which finds all $\Z^3$-maximal polytopes in $\cP(\Z^3)$ with lattice width at least three 
and verifies that all of them are also $\R^3$-maximal.
For enumerating all such polytopes, 
we establish 
bounds on the volume, the first successive minimum and the lattice diameter of $\Z^3$-maximal integral lattice-free polytopes with 
lattice width at least three, making use of classical
results from the geometry of numbers. 
Finally, an implementation of the search algorithm is provided in the appendix.

\subsection*{Basic notation and terminology}
For background information on the theory of mixed-integer optimization, convex sets and polyhedra, and the geometry of
numbers, see for example \cite{MR874114}, \cite{MR1940576}, \cite{MR1451876}, \cite{MR1216521} and \cite{MR893813}.

Throughout the text, $d \in \N$ is the dimension of the ambient space $\R^d$, which is equipped with the standard Euclidian scalar product,
denoted by $\sprod{\cdot}{\cdot}$. By $o$ we denote the zero vector and $e_i$ denotes the $i$-th unit vector, where in both cases 
the dimension is given by the context.
For two points $x,y \in \R^d$, we denote the closed line segment
connecting those points by $[x,y]$.
Accordingly, one has the open line segment $(x,y) := \setcond{\lambda x + (1-\lambda) y}{0 < \lambda < 1}$.
For a subset $X$ of $\R^d$, we write $Y \subseteq X$ if $Y$ is a subset of $X$ and $Y \varsubsetneq X$ if, furthermore, $Y \neq X$ holds.
The cardinality of $X$ is denoted by $|X|$. By $-X$ we denote the set $\setcond{-x}{x \in X}$.
For two subsets $X,Y$ of $\R^d$, we write $X + Y := \setcond{x+y}{x \in X, y \in Y}$ for the {\em Minkowski sum} of $X$ and $Y$, 
while $X - Y$ is short for $X + (-Y)$.
For $t \in \R$, we write $t X := \setcond{tx}{x \in X}$.
For a set $Z \subseteq \Z$, we denote by $\gcd(Z)$ the {\em greatest common divisor} of the set $Z$. 
We call a vector $z \in \Z^d \setminus \{o\}$ {\em primitive} if the greatest common divisor of the components of $z$ is one.

For a set $X \subseteq \R^d$, we use $\conv(X)$, $\aff(K)$ and $\lspan(X)$ to denote the \emph{convex hull}, \emph{affine hull} and \emph{linear hull} of $X$, respectively.
By $\bd{K}$, $\relbd{K}$,
$\relintr{K}$  and $\intr{K}$ we denote the {\em boundary}, the {\em relative boundary}, the \emph{relative interior} and the \emph{interior} of a 
convex set $K$, 
respectively. By $\vol(K)$, we denote the {\em volume} of $K$ and $\dim(K)$ denotes the {\em dimension} of $K$, which is defined as the dimension
of the affine hull of $K$. 
A convex set $K$ such that $K = -K$ is said to be $o$-symmetric. A compact convex set with non-empty interior is called {\em convex body}.
For a convex body $K \subseteq \R^d$, the convex body $K +(- K)$ is called the {\em difference body} of $K$.
For a convex set $K \subseteq \R^d$, we say that $K'$ is a {\em homothetic copy} of $K$
if $K' = \lambda K + v$ for some $\lambda \in \R$ and $v \in \R^d$. We call
$K'$ a positive (non-negative) homothetic copy of $K$ if $\lambda$ is a positive (non-negative) number. 
For a convex set $K \subseteq \R^d$ and a vector $u \in \R^d \setminus \{o\}$, 
we denote by $h(K,u)$ the \emph{support function} $h(K,u) := \operatorname{sup}\setcond{\sprod{u}{x}}{x \in K}$. 

Let $b_1, \ldots, b_d$ linearly independent elements of $\R^d$. 
Then we call the group $\Lambda = \setcond{z_1 b_1 + \ldots + z_d b_d}{z_1, \ldots, z_d \in \Z}$ a {\em lattice} of rank $d$. 
The vectors $b_1, \ldots, b_d$ are said to
form a {\em lattice basis}. The {\em determinant} of $\Lambda$ is defined as the determinant
of the matrix having columns $b_1, \ldots, b_d$.
We say that a subgroup $H$ of a lattice $\Lambda$ is a {\em lattice hyperplane} if $H$ spans a $(d-1)$-dimensional affine subspace of $\R^d$.
Two parallel lattice hyperplanes are said to be {\em adjacent} if there are no points of $\Lambda$ properly between them.

For a subset $X$ of $\R^d$ and a vector $u \in \R^d$, by
\[
 \width{X,u} = \sup_{x \in X} \sprod{u}{x} - \inf_{y \in X} \sprod{u}{y}
\]
we denote the {\em width function} of $X$. If $u$ is a primitive integral vector, then
$\width{X,u}$ is called the {\em (lattice) width} of $X$ in the direction of $u$.
Note that if $X$ is compact (say, a convex body), the infimum and supremum, respectively, are attained.
The {\em lattice width}\label{page:lw} of a set $X \subseteq \R^d$ is defined as the infimum of the width function of $X$ over all non-zero integral vectors, i.e. 
\[
 \lw{X} := \inf_{z \in \Z^d \setminus \{o\}} \width{X,z}.
\]
Again, for a convex body the infimum is attained. We say that the lattice width of a convex body $K$ is {\em attained}
for $z' \in \Z^d \setminus \{o\}$ if $\lw{K} = \width{K,z}$.

A \emph{polyhedron} $P \subseteq \R^d$ is the intersection of finitely many closed halfspaces.
A bounded polyhedron is called a {\em polytope} and a two-dimensional polytope is called {\em polygon}. 
A polyhedron $P$ is called {\em rational} if there exists $s \in \N$ such that $P \in \cP(\frac{1}{s}\Z^d)$, that is, $P = \conv(P \cap \frac{1}{s}\Z^d)$.

For a polyhedron $P$ and a vector $u \in \R^d \setminus \{o\}$, if $h(P,u)$ is finite, we define $F(P,u) := \setcond{x \in P}{\sprod{x}{u} = h(P,u)}$
and call $F(P,u)$ a {\em face} of $P$.
If $F := F(P,u)$ has dimension $\dim(P) - 1$, we say that $F$ is a {\em facet} of $P$ and say that 
$u$ is an {\em outer normal (facet) vector} of $F$. If $F$ consists only of a single point, then this point is called a {\em vertex} of $P$.
The set of vertices of $P$ is denoted by $\vertset{P}$. 
For a $d$-dimensional rational polyhedron $P$, we define $U(P)$ to be the set of all primitive vectors which are outer normal vectors of facets of $P$. 
Note that if $P$ is rational,
every facet of $P$ can be given as $F(P,u)$ for some $u \in U(P)$. 
Let $x \in \bd{P}$. Then the {\em normal cone} of $P$ at $x$ is defined as $N(P,x) := \{o\} \cup \setcond{u \in \R^d \setminus \{o\}}{x \in F(P,u)}$.
We call a vertex $v$ of $P$ {\em unimodular} in this article if $v$ has the 
property that there are exactly $d$ facets of $P$ containing $v$ and these facets can be given as $F(P,u_1), \ldots, F(P,u_d)$, where
$u_1, \ldots, u_d$ are elements of $U(P)$ forming a basis of the lattice $\Z^d$.

A $d$-dimensional polytope with $d+1$ vertices is called \emph{simplex} of dimension $d$.
Given a $d$-dimensional simplex $S \subseteq \R^d$ with vertices $v_1, \ldots, v_{d+1}$ and a point $x \in \R^d$, the \emph{barycentric coordinates} of 
$x$ with respect to $S$ are uniquely determined real numbers $\beta_1, \ldots, \beta_{d+1}$ satisfying 
$x = \sum_{i=1}^{d+1} \beta_i v_i$ and $\sum_{i=1}^{d+1} \beta_i =1$. In this case, for $i \in \{1, \ldots, d+1\}$, we say that $\beta_i$ is the barycentric coordinate associated with $v_i$.
One has $x \in \intr{S}$ if and only if $\beta_i > 0$ for all $i \in \{1, \ldots, d+1\}$.

\section{Half-integral lattice-free polyhedra in dimension two}\label{sec:s2d2}

As explained in the outline of the proof, in this section we show that for two-dimensional \emph{half-integral} lattice-free
polygons, $\Z^2$-maximality implies $\R^2$-maximality.

We make use of the following characterization of $\R^2$-maximal lattice-free convex sets in $\R^2$.
Although the characterization does not seem to be available in the literature in precisely this form, it is largely known from \cite{MR1060014},
\cite[Proposition 1]{MR2679986} and \cite{MR2890359}.

\begin{theorem}[Classification of maximal lattice-free sets in dimension two]\label{thm:dey_wolsey}
			Let $L$ be an $\R^2$-maximal lattice-free convex set in $\R^2$. 
			Then there exists a unimodular transformation $\varphi$ such that $\varphi(L) \in \cL_1 \cup \ldots \cup \cL_5$, where $\cL_1, \ldots, \cL_5$ are the following families (see Fig.~\ref{fig:mlf_polygons}).
			\begin{enumerate}[label=(\alph*)]
				\item\label{eq:split} $\cL_1$ consists of exactly one polyhedron, namely the unbounded set $[0,1] \times \R$.
				\item\label{eq:axis-aligned} $\cL_2$ is the set of all `axis-aligned' triangles with vertices $v_1,v_2,v_3$ such that $v_1$ coincides with the vertex $o$ of the square $[0,1]^2$, 
				the vertices $v_2,v_3$ satisfy
				\begin{align*}
					v_2 & \in (1,+\infty) \times \{0\}, 
					& v_3 & \in \{0\} \times (1,+\infty) 
				\end{align*}
				and for the vertices $e_1, e_2, e_1 + e_2$ of the square $[0,1]^2$ one has
				\begin{align*}
					e_1 & \in (v_1,v_2), & e_2 & \in (v_1,v_3), &  e_1 + e_2 & \in (v_2,v_3).
				\end{align*}
				\item\label{eq:type2} $\cL_3$ is the set of triangles with vertices $v_1, v_2, v_3$ satisfying
				\begin{align*}
					v_1 & \in (-\infty,0) \times \{0\}, 
					& v_2 & \in (1,+\infty) \times \{0\},
					& v_3 & \in (0,1) \times (1,+\infty)
				\end{align*}
				and such that for the vertices $o, e_1, e_2, e_1 + e_2$ of the square $[0,1]^2$ one has
				\begin{align*}
					o,e_1 & \in (v_1,v_2), & e_2 & \in (v_1,v_3), &  e_1 + e_2 & \in (v_2,v_3).
				\end{align*}
				\item\label{eq:type3} $\cL_4$ is the set of triangles with vertices $v_1, v_2, v_3$ satisfying
				\begin{align*}
					 v_1 & \in \setcond{(x_1,x_2) \in \R^2}{x_2 > 1, \, 0 < x_1 + x_2 < 1},\\
					 v_2 & \in \setcond{(x_1,x_2) \in \R^2}{0 < x_1 < 1, \, x_1 + x_2 < 0},\\
					 v_3 & \in \setcond{(x_1,x_2) \in \R^2}{x_1 > 1, \, 0 < x_2 < 1}
				\end{align*}
				and such that for the vertices $o, e_1, e_2$ of the triangle $\conv(o,e_1,e_2)$ one has
				\begin{align*}
					o & \in (v_1,v_2), & e_1 & \in (v_2,v_3), & e_2 & \in (v_3,v_1).
				\end{align*}
				\item\label{eq:quadr} $\cL_5$ is the set of quadrilaterals with vertices $v_1, v_2, v_3, v_4$ satisfying
				\begin{align*}
					v_1 & \in (-\infty,0) \times (0,1), & v_2 & \in (0,1) \times (-\infty,0),\\
					v_3 & \in (1,+\infty) \times (0,1), & v_4 & \in (0,1) \times (1,+\infty), \\
				\end{align*}
				and such that for the vertices $o, e_1, e_2, e_1 + e_2$ of the square $[0,1]^2$ one has
				\begin{align*}
					o & \in (v_1,v_2), & e_1 & \in (v_2,v_3), & e_1+e_2 & \in (v_3,v_4), & e_2 & \in (v_4,v_1).
				\end{align*}
			\end{enumerate}
		\end{theorem}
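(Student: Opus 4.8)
The plan is to deduce the explicit coordinate description from the classical structure theory of $\R^2$-maximal lattice-free convex sets and to reduce everything to a unimodular normalization of finitely many cases. Two facts are available from the literature: by \cite{MR1060014}, \cite{MR2679986} and \cite{MR2890359} such an $L$ is a polyhedron which is either an unbounded strip or a polygon with at most $2^2 = 4$ edges, and, by the blocked-facet characterization quoted in the introduction \cite{MR1114315}, every edge of $L$ carries a point of $\Z^2$ in its relative interior. Together these give the coarse trichotomy into which the five families fall: $L$ is a strip between two adjacent lattice lines, or a triangle, or a quadrilateral, and in the two bounded cases each edge has a relative-interior lattice point.

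I would treat the unbounded case first, since it is immediate: a strip of lattice width one is unimodularly equivalent to $[0,1]\times\R$, which is $\cL_1$. For the bounded cases the remaining work is a normalization. In each case I would fix one relative-interior lattice point on every edge and apply a single unimodular map carrying a suitable reference subconfiguration of these points to $o,e_1,e_2$ (and, where needed, $e_1+e_2$), after which the conditions that the interior of $L$ is lattice-free and that every edge is blocked translate directly into the open intervals and regions prescribed for the vertices.

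For a triangle I would split according to the combinatorics of the boundary lattice points. If some vertex of $L$ is integral, send it to $o$ and the two primitive edge directions there to $e_1,e_2$; this puts $e_1,e_2$ on the two incident edges and forces $e_1+e_2$ onto the opposite edge, giving $\cL_2$. If no vertex is integral but some edge carries two lattice points, send two consecutive such points to $o,e_1$ so that this edge lies on the $x$-axis, and read off the positions of the remaining vertices and of $e_2,e_1+e_2$, giving $\cL_3$. If no vertex is integral and each edge carries exactly one lattice point, the three boundary lattice points span a unimodular triangle which I send to $\conv(o,e_1,e_2)$, giving $\cL_4$. For a quadrilateral I would take the four relative-interior lattice points $p_1,p_2,p_3,p_4$ in cyclic order, show they are the vertices of a fundamental cell of $\Z^2$, and map them to $o,e_1,e_1+e_2,e_2$; the four vertices of $L$ then fall into the open regions defining $\cL_5$.

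The main obstacle will be justifying the determinant-one statements on which the simultaneous normalizations rest, above all in the quadrilateral case: one must show that $p_1,p_2,p_3,p_4$ really span a parallelogram of determinant one, equivalently that $\bd{L}$ and the interior of $\conv(p_1,\dots,p_4)$ contain no further lattice points, so that a single unimodular map can realize all four placements at once. The same determinant-one issue arises for the three edge points in the triangle case with exactly one lattice point per edge, and is handled by the same combination of lattice-freeness with the absence of other boundary lattice points (for instance through Pick's theorem). Everything else reduces to elementary computations with the supporting lines of the edges.
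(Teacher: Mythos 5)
Your overall route is the same as the paper's: the structure theory from \cite{MR1114315} (strip, triangle, or quadrilateral, every edge blocked), followed by a unimodular normalization of the blocking lattice points onto $\{0,1\}^2$ (respectively onto $\{o,e_1,e_2\}$ in the three-lattice-point triangle case), with the determinant-one facts supplied by \cite{MR2679986} and \cite{MR1060014} or by Pick-type arguments. That matches the paper's case split exactly, and your identification of the determinant-one statements as the point needing justification is apt for the cases $\cL_2$, $\cL_3$ and $\cL_5$.

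The gap is in the $\cL_4$ case. You assert that once the three boundary lattice points are mapped to $o,e_1,e_2$, lattice-freeness and blockedness ``translate directly'' into the prescribed open regions for $v_1,v_2,v_3$. They do not. Writing $o=\alpha_1 v_1+(1-\alpha_1)v_2$, $e_1=\alpha_2 v_2+(1-\alpha_2)v_3$, $e_2=\alpha_3 v_3+(1-\alpha_3)v_1$ and inverting this system (the paper does so via Cramer's rule), one finds that membership of the vertices in the $\cL_4$ regions is \emph{equivalent} to the inequalities $\alpha_i+\alpha_j<1$ for all pairs $i\neq j$; for instance, the first coordinate of $v_3$ exceeds $1$ precisely when $\alpha_1+\alpha_3<1$. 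So one needs the nontrivial dichotomy of \cite[Lemma~5.1]{MR2890359} --- either all three sums $\alpha_i+\alpha_j$ are less than $1$ or all are greater than $1$ --- and in the second alternative the normalized triangle is \emph{not} in $\cL_4$: a further unimodular swap $(x_1,x_2)\mapsto(x_2,x_1)$ together with a re-indexing of the vertices is required to land in the family. Without this dichotomy and the extra transformation, the ``read off the positions'' step fails for half of the configurations. This is the only place where the paper does a genuine computation, and it is exactly the step your plan compresses into ``elementary computations with the supporting lines''; the rest of your proposal is sound.
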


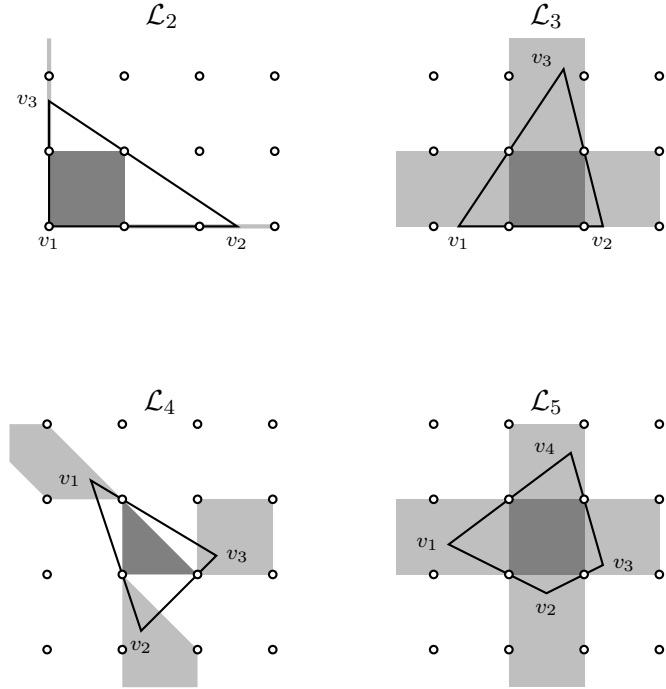
\begin{figure}[h!]
\begin{center}
\begin{tabular}{cc}
\begin{tikzpicture}
\draw[ultra thick, lightgray] (0,1) -- (0,2.5) (1,0) -- (3,0);
\fill[gray] (0,0) -- (0,1) -- (1,1) -- (1,0) --cycle;
 		\draw[fill=black] (0,0) circle (0.05);
 		\draw[fill=black] (0,1) circle (0.05);
 		\draw[fill=black] (1,0) circle (0.05);
 		\draw[fill=black] (1,1) circle (0.05);
		\draw[fill=black] (2,0) circle (0.05);
 		\draw[fill=black] (0,2) circle (0.05);
 		\draw[thick] (0,0) -- (2.5,0) -- (0,5/3) --cycle;
 		\node[above] at (1.5,2.5) {$\cL_2$};
\node[left] at (0,5/3) {\scriptsize $v_3$};
\node[below] at (0,0) {\scriptsize $v_1$};
\node[below] at (2.5,0) {\scriptsize $v_2$};
             \foreach \x in {0,...,3}
            {
                \foreach \y in {0,...,2}
                {
                    \fill[white,draw=black, thick] (\x,\y) circle (0.5mm);
                }
            }
\end{tikzpicture}
& \hspace{1cm}
\begin{tikzpicture}
\fill[lightgray] (0,1) -- (1,1) -- (1,2.5) -- (0,2.5) --cycle;
\fill[lightgray] (0,0) -- (-1.5,0) -- (-1.5,1) -- (0,1) --cycle;
\fill[lightgray] (1,0) -- (2,0) -- (2,1) -- (1,1) --cycle;
\fill[gray] (0,0) -- (0,1) -- (1,1) -- (1,0) --cycle;
 		\draw[fill=black] (0,0) circle (0.05);
 		\draw[fill=black] (0,1) circle (0.05);
 		\draw[fill=black] (1,0) circle (0.05);
 		\draw[fill=black] (1,1) circle (0.05);
		\draw[thick] (-2/3,0) -- (5/4,0) -- (8/11,23/11) --cycle;
 		\node[above] at (0.5,2.5) {$\cL_3$};
\node[below] at (-2/3,0) {\scriptsize $v_1$};
\node[below] at (5/4,0) {\scriptsize $v_2$};
\node[above left] at (8/11,2) {\scriptsize $v_3$};
             \foreach \x in {-1,...,2}
            {
                \foreach \y in {0,...,2}
                {
                    \fill[white,draw=black, thick] (\x,\y) circle (0.5mm);
                }
            }
\end{tikzpicture}
\\ \vspace{1cm} & \\
\begin{tikzpicture}
            \fill[lightgray] (0,0) -- (1,-1) -- (1,-3/2) -- (0,-3/2) -- cycle;
            \fill[lightgray] (1,0) -- (1,1) -- (2,1) -- (2,0) -- cycle;
            \fill[lightgray] (0,1) -- (-1,1) -- (-3/2,3/2) -- (-3/2,2) -- (-1,2) -- cycle;
\fill[gray] (0,0) -- (0,1) -- (1,0) --cycle;
 		\draw[fill=black] (0,0) circle (0.05);
 		\draw[fill=black] (0,1) circle (0.05);
 		\draw[fill=black] (1,0) circle (0.05);
		\draw[thick] (5/4,1/4) -- (1/4,-3/4) -- (-5/12,5/4) --cycle;
 		\node[above] at (0.5,2) {$\cL_4$};
\node[left] at (-5/12,5/4) {\scriptsize $v_1$};
\node[below] at (1/4,-3/4) {\scriptsize $v_2$};
\node[right] at (5/4,1/4) {\scriptsize $v_3$};
             \foreach \x in {-1,...,2}
            {
                \foreach \y in {-1,...,2}
                {
                    \fill[white,draw=black, thick] (\x,\y) circle (0.5mm);
                }
            }
\end{tikzpicture}
& \hspace{1cm}
\begin{tikzpicture}
\fill[lightgray] (0,1) -- (1,1) -- (1,2) -- (0,2) --cycle;

\fill[lightgray] (0,0) -- (-3/2,0) -- (-3/2,1) -- (0,1) --cycle;

\fill[lightgray] (1,0) -- (2,0) -- (2,1) -- (1,1) --cycle;

\fill[lightgray] (0,0) -- (0,-3/2) -- (1,-3/2) -- (1,0) --cycle;
\fill[gray] (0,0) -- (0,1) -- (1,1) -- (1,0) --cycle;
 		\draw[fill=black] (0,0) circle (0.05);
 		\draw[fill=black] (0,1) circle (0.05);
 		\draw[fill=black] (1,0) circle (0.05);
 		\draw[fill=black] (1,1) circle (0.05);
		\draw[thick] (-0.8,0.4) -- (0.5,-0.25) -- (1.25,0.125) -- (14/17,55/34) --cycle;
 		\node[above] at (0.5,2) {$\cL_5$};
\node[left] at (-0.8,0.4) {\scriptsize $v_1$};
\node[below] at (0.5,-0.25) {\scriptsize $v_2$};
\node[right] at (1.25,0.125) {\scriptsize $v_3$};
\node[left] at (13/17,57/34) {\scriptsize $v_4$};
             \foreach \x in {-1,...,2}
            {
                \foreach \y in {-1,...,2}
                {
                    \fill[white,draw=black, thick] (\x,\y) circle (0.5mm);
                }
            }
\end{tikzpicture}
\end{tabular}
\caption{Illustration of the classes $\cL_2, \ldots, \cL_5$ of Theorem~\ref{thm:dey_wolsey}. 
The area shaded in darker grey is $[0,1]^2$ for $\cL_2$, $\cL_3$, $\cL_5$ and $\conv(o, e_1, e_2)$ for $\cL_4$. The areas shaded in light grey are
the regions containing the vertices of $\varphi(L)$.}\label{fig:mlf_polygons}
\end{center}
\end{figure}

\begin{proof}
If $L$ is unbounded, then it is well known from \cite[\S 3]{MR1114315}, 
it is unimodularly equivalent to $[0,1] \times \R \in \cL_1$.
If $L$ is bounded, from \cite[\S 3]{MR1114315} we have that $L$ is either a triangle or a quadrilateral and again from \cite[\S 3]{MR1114315} we have that
each edge of $L$ is blocked. 
It is well-known (see, e.g., \cite{MR1060014},
\cite[Proposition~1]{MR2679986}) that if $L$ is a quadrilateral, then the four integral points blocking its edges can be unimodularly mapped onto $\{0,1\}^2$.
It is then easy to see that this mapping transforms $L$ into an element of $\cL_5$. 
This leaves the case that $L$ is a triangle. Assume first that
$L$ has an integral vertex $v$. 
Then we can choose an integral point in the relative interior of each of the three edges of $L$ such that those three points together with $v$
can be unimodularly mapped onto $\{0,1\}^2$ (see again \cite[Proposition~1]{MR2679986}) where $v$ is mapped onto $o$. Thus, $\varphi(L) \in \cL_2$.
We now switch to the case that $L$ does not have integral vertices.
If $L$ 
contains at least four integral points, then the relative interior of some edge of $L$ contains at least two integral points $u_1,u_2$ such that $(u_1,u_2) \cap \Z^2 = \emptyset$. 
Together with one point from the relative interior of each of the two remaining edges, they form an integral quadrilateral not containing any other
integral points. Unimodularly mapping this quadrilateral onto $\{0,1\}^2$ such that $u_1$ and $u_2$ are mapped onto $o$ and $e_1$, respectively, yields an element of $\cL_3$.

This leaves the case that $L$ has precisely three integral points.
There exists a unimodular
transformation $\varphi$ such that $\varphi(L) \cap \Z^2 = \{o, e_1, e_2\}$ and $\varphi(L)$ has vertices $v_1, v_2, v_3$
with $o  \in (v_1,v_2)$, $e_1  \in (v_2,v_3)$ and $e_2  \in (v_3,v_1)$ (see, e.g., \cite{MR1060014}). 
We prove $\varphi(L) \in \cL_4$, for which it remains to show that $v_1,v_2,v_3$ are situated as claimed
in \ref{eq:type3}. We can express $o,e_1,e_2$ with respect to $v_1,v_2,v_3$ as follows: 
there exist $0 < \alpha_1, \alpha_2, \alpha_3 < 1$ such that 
\begin{align}
o &= \alpha_1 v_1 + (1-\alpha_1) v_2, \notag \\
e_1 &= \alpha_2 v_2 + (1-\alpha_2)v_3, \label{eq:bc_def}\\
e_2 &= \alpha_3 v_3 + (1-\alpha_3)v_1 \notag.
\end{align}
We now want to express $v_1,v_2,v_3$ with respect to $o,e_1,e_2$. We follow the argumentation in \cite[Proof of Lemma 4.2]{MR2890359} and \cite[(7)]{MR1060014} 
and translate \eqref{eq:bc_def} into the following matrix representation:
\begin{align*}
\begin{pmatrix} 1 & 1 & 1 \\ 0 & 1 & 0 \\ 0 & 0 & 1 \end{pmatrix} = \begin{pmatrix} 1 & 1 & 1 \\ v_1 & v_2 & v_3 \end{pmatrix} \begin{pmatrix}
\alpha_1 & 0 & 1 - \alpha_3 \\ 1 - \alpha_1 & \alpha_2 & 0 \\ 0 & 1 - \alpha_2 & \alpha_3 \end{pmatrix}.
\end{align*}
The determinant of the matrix involving $\alpha_1, \alpha_2, \alpha_3$ is $D : = \alpha_1 \alpha_2 \alpha_3 + (1-\alpha_1)(1 - \alpha_2)(1 - \alpha_3) > 0$. Hence,
this matrix is invertible and we can
 directly compute the components of $v_1,v_2,v_3$ via Cramer's rule:
\begin{align}
v_1 &= \frac{1}{D}\Big(-(1-\alpha_1)\alpha_3, \; (1-\alpha_1)(1-\alpha_2)\Big), \notag \\
v_2 &= \frac{1}{D}\Big(\alpha_1\alpha_3, \; -\alpha_1(1-\alpha_2)\Big), \label{eq:bc_ineq_2}\\
v_3 &= \frac{1}{D}\Big((1-\alpha_1)(1-\alpha_3), \; \alpha_1\alpha_2\Big). \notag
\end{align}
In \cite[Lemma 5.1]{MR2890359} it was shown that either 
\begin{align}
 \alpha_i + \alpha_j < 1 \text{ for all } 1 \le i < j \le 3 \label{eq:bc_ineq_1}
\end{align}
or $\alpha_i + \alpha_j > 1 \text{ for all } 1 \le i < j \le 3$. 
In the case that \eqref{eq:bc_ineq_1} holds, it is straightforward to verify that the vertices $v_1,v_2,v_3$ are situated as claimed.
If $\alpha_i + \alpha_j > 1 \text{ for every } i,j \in \{1,2,3\}$ holds instead of \eqref{eq:bc_ineq_1}, one can instead prove in a straightforward way
that this yields the following regions for the vertices $v_1,v_2,v_3$:
				\begin{align*}
					 v_1 & \in \setcond{(x_1,x_2) \in \R^2}{x_1 + x_2 < 0, \, 0 < x_2 < 1},\\
					 v_2 & \in \setcond{(x_1,x_2) \in \R^2}{x_1 > 1, \, 0 < x_1 + x_2 < 1}\\
					 v_3 & \in \setcond{(x_1,x_2) \in \R^2}{0 < x_1 < 1, \, x_2 > 1}.
				\end{align*}
Applying the unimodular transformation $(x_1,x_2) \mapsto (x_2,x_1)$,
and appropriately re-indexing the vertices yields a triangle in $\cL_4$. This is again straightforward to verify.
\end{proof}

Based on the previous theorem, we now enumerate all $\Z^2$-maximal lattice-free half-integral polygons up to unimodular equivalence.
The polygons $Q_2, Q_3, Q_4, Q_5$ of the following theorem are depicted in Figure~\ref{fig:s2d2}.

\begin{figure}[h!]
\begin{center}
\begin{tabular}{cc}
	\begin{tikzpicture}
\fill[lightgray] (0,0) -- (2,0) -- (0,2) --cycle;
		\draw[thick](0,0) -- (2,0) -- (0,2) --cycle;
\fill[lightgray] (3,0) -- (6,0) -- (4.5,1.5) --cycle;
		\draw[thick](3,0) -- (6,0) -- (4.5,1.5) --cycle;
		\fill[black] (9/2,3/2) circle (0.5mm);
\fill[lightgray] (7.5,0) -- (9.5,0) -- (8.5,2) --cycle;
		\draw[thick] (7.5,0) -- (9.5,0) -- (8.5,2) --cycle;
		\fill[black] (7.5,0) circle (0.5mm);
		\fill[black] (8.5,2) circle (0.5mm);
		\fill[black] (9.5,0) circle (0.5mm);
\fill[lightgray] (10.5,0.5) -- (11.5,-0.5) -- (12.5,0.5) -- (11.5,1.5) --cycle;
		\draw[thick] (10.5,0.5) -- (11.5,-0.5) -- (12.5,0.5) -- (11.5,1.5) --cycle;
		\fill[black] (10.5,0.5) circle (0.5mm);
		\fill[black] (11.5,-0.5) circle (0.5mm);
		\fill[black] (12.5,0.5) circle (0.5mm);
		\fill[black] (11.5,1.5) circle (0.5mm);
             \draw[step=0.5, black, dotted] (0,-0.5) grid (13,2);
            \foreach \x in {0,...,13}
            {
                \foreach \y in {0,...,2}
                {
                    \fill[white,draw=black, thick] (\x,\y) circle (0.5mm);
                }
            }
	\end{tikzpicture}
\end{tabular}
\caption{All $\Z^2$-maximal lattice-free $\frac{1}{2}\Z^2$-polytopes.}\label{fig:s2d2}
\end{center}
\end{figure}
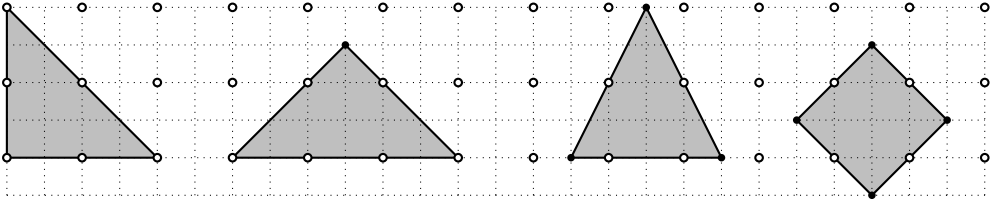

\begin{theorem}[$\Z^2$-maximal lattice-free polyhedra in $\cP(\frac{1}{2}\Z^2)$]\label{thm:half-integral}
Let $Q \in \cP(\frac{1}{2}\Z^2)$. Then $Q$ is $\Z^2$-maximal lattice-free if and only if $Q \simeq Q_i$ for some $i \in \{1, \ldots, 5\}$, where
    \begin{itemize}
\item[] $Q_1$ is the unbounded polyhedron $[0,1] \times \R$,
\item[] $Q_2$ is the triangle with vertices $(0,0),(2,0),(0,2)$,
\item[] $Q_3$ is the triangle with vertices $(\frac{1}{2}\pm \frac{3}{2},0),(\frac{1}{2},\frac{3}{2})$,
\item[] $Q_4$ is the triangle with vertices $(\frac{1}{2} \pm 1,0),(\frac{1}{2},2)$,
\item[] $Q_5$ is the quadrilateral with vertices $(-\frac{1}{2},\frac{1}{2}), (\frac{1}{2},\frac{3}{2}), (\frac{3}{2},\frac{1}{2}), (\frac{1}{2},-\frac{1}{2})$.
    \end{itemize}
In particular, $Q$ is $\Z^2$-maximal lattice-free if and only if $Q$ is $\R^2$-maximal lattice-free.
\end{theorem}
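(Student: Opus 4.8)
The plan is to prove both implications of the equivalence and to deduce the final sentence as a formal consequence. For the direction ``$Q \simeq Q_i \Rightarrow Q$ is $\Z^2$-maximal lattice-free'', I would first note that each $Q_i$ belongs to $\cP(\frac12\Z^2)$ and is lattice-free, and then verify by direct inspection that every edge of each $Q_i$ contains a point of $\Z^2$ in its relative interior: the three edges of $Q_2$ are blocked by $(1,0),(0,1),(1,1)$, the four edges of $Q_5$ by $(0,0),(1,0),(1,1),(0,1)$, and the edges of $Q_1,Q_3,Q_4$ are blocked similarly. By the characterization of $\R^2$-maximality recalled in the introduction (a lattice-free polyhedron is $\R^2$-maximal iff each facet is blocked; see \cite{MR1114315}), each $Q_i$ is therefore $\R^2$-maximal. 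Since $\Z^2 \subseteq \R^2$ makes $\R^2$-maximality the stronger property, each $Q_i$ is $\Z^2$-maximal. This settles one direction, together with the nontrivial half of the final equivalence.

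For the converse, let $Q \in \cP(\frac12\Z^2)$ be $\Z^2$-maximal lattice-free. Being lattice-free, $Q$ is contained in some $\R^2$-maximal lattice-free convex set $L$, and after a unimodular transformation Theorem~\ref{thm:dey_wolsey} lets me assume $L \in \cL_1 \cup \cdots \cup \cL_5$. The first key observation is the identity $Q \cap \Z^2 = L \cap \Z^2$: indeed, for any $p \in L \cap \Z^2$ the set $\conv(Q \cup \{p\})$ lies in the lattice-free set $L$ and is hence lattice-free, so $\Z^2$-maximality forces $p \in Q$. Consequently $\conv(L \cap \Z^2) \subseteq Q \subseteq L$. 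For $L \in \cL_1$ this already finishes the argument, since $\conv(L \cap \Z^2) = [0,1] \times \R = L$, whence $Q = L = Q_1$.

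The heart of the proof is then to show, for $L \in \cL_2 \cup \cdots \cup \cL_5$, that in fact $Q = L$ --- equivalently, that every edge of $Q$ is blocked, so that $Q$ is itself $\R^2$-maximal. I would argue by contradiction: suppose some edge $F = F(Q,u)$ of $Q$, with $u$ primitive integral and $c := h(Q,u) \in \frac12\Z$ (as the vertices of $Q$ lie in $\frac12\Z^2$), is not blocked. Then $F$ cannot lie on $\bd L$, for a facet of $L$ is blocked and its blocking integral point lies in $Q$ by the identity above, landing in $\relintr{F}$; hence $F$ cuts into $\intr{L}$. Using half-integrality, the next integral line strictly above level $c$ sits at level $\lfloor c \rfloor + 1$, and the open slab $\{\,c < \sprod{u}{\cdot} < \lfloor c\rfloor+1\,\}$ contains no integral points; placing an integral point $x$ on that line \emph{directly over} $F$ (so that $\conv(Q \cup \{x\})$ coincides with $Q$ below level $c$ and adds only a lattice-point-free sliver) produces a proper lattice-free integral extension of $Q$, contradicting $\Z^2$-maximality. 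The delicate point --- and the main obstacle --- is guaranteeing that such an $x$ exists over \emph{short} facets; here I would use the explicit geometry of each family $\cL_j$ together with the identity $Q \cap \Z^2 = L \cap \Z^2$ to control the length and position of $F$. Once $Q = L$ is established, $Q$ is a half-integral $\R^2$-maximal set, and I would finish by reading off Theorem~\ref{thm:dey_wolsey} under the constraint that all vertices lie in $\frac12\Z^2$: normalizing coordinates via the blocking integral points turns this into a finite check within each family, showing that $\cL_2$ and $\cL_3$ yield exactly $Q_2,Q_3,Q_4$ (with members merging across families under unimodular equivalence, e.g.\ $\conv(o,(\tfrac32,0),(0,3)) \simeq Q_3$), that $\cL_5$ yields $Q_5$, and that $\cL_4$ contains no half-integral member at all. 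This gives $Q \simeq Q_i$ for some $i$, and the equivalence of $\Z^2$- and $\R^2$-maximality then follows from the $\R^2$-maximality of the $Q_i$ established in the first paragraph.
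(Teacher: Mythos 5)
Your easy direction (each $Q_i$ has all edges blocked, hence is $\R^2$-maximal and a fortiori $\Z^2$-maximal), the reduction to $L \in \cL_1 \cup \dots \cup \cL_5$, the identity $Q \cap \Z^2 = L \cap \Z^2$ (the paper's inclusion $\conv(L \cap \Z^2) \subseteq Q$), and your terminal enumeration of the half-integral members of each family are all correct; in particular your observation that $\cL_4$ has no half-integral member is a clean reformulation of the paper's Case~4. The gap is exactly the step you yourself call ``the main obstacle'': showing that every edge of $Q$ is blocked. Your proposed witness --- an integral point on the next integral line placed \emph{directly over} an unblocked edge $F$ --- need not exist. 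For example, the half-integral lattice-free polygon $Q = \conv\big((0,0),(\tfrac{5}{2},0),(1,1),(0,\tfrac{3}{2})\big)$ has the unblocked edge $\big[(\tfrac{5}{2},0),(1,1)\big]$ on the line $2x_1+3x_2=5$, while the adjacent integral line $2x_1+3x_2=6$ carries integral points only at $(3,0),(0,2),(6,-2),\dots$, none of which projects into that edge; the point certifying non-maximality of this $Q$ is $(3,0)$, which is not over the edge at all (here $\conv(Q \cup \{(3,0)\}) = \conv\big((0,0),(3,0),(0,\tfrac{3}{2})\big)$ is lattice-free). This particular $Q$ violates $\conv(L\cap\Z^2)\subseteq Q$ and so might be excluded by your side conditions --- but that is precisely the point: the existence of a valid witness must be extracted from those conditions, and you defer that to unspecified ``explicit geometry of each family,'' which is where essentially all the work lies. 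Two further unproved assertions in the same step: even when $x$ exists over $F$, the region $\conv(Q\cup\{x\})\setminus Q$ can bulge sideways past the edges adjacent to $F$ and pull a boundary lattice point of $Q$ into the interior, so ``adds only a lattice-point-free sliver'' is not automatic; and your claim that an unblocked edge cannot lie on $\bd{L}$ fails, since the integral point blocking the corresponding facet of $L$ may be an \emph{endpoint} of $F$ rather than a point of $\relintr{F}$.

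For contrast, the paper never constructs pointwise witnesses over facets. It pairs $\conv(L\cap\Z^2)\subseteq Q$ with the second sandwich inclusion $Q \subseteq \conv\big(L\cap\tfrac{1}{2}\Z^2\big)$ and then shows, family by family, that if $Q$ contained any of the far-out half-integral points of $L$, then $Q$ would be contained in an unbounded lattice-free integral polyhedron unimodularly equivalent to $[0,1]\times\R$, properly containing the bounded $Q$ and contradicting $\Z^2$-maximality; the non-maximality certificates are thus entire splits rather than single added points. This leaves finitely many candidate vertices in each family and reduces the theorem to a finite check. To salvage your edge-by-edge strategy you would have to carry out a comparable case analysis anyway, so as written the proposal does not constitute a proof.
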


\begin{proof}
Let $Q \in \cP(\frac{1}{2}\Z^2)$ be $\Z^2$-maximal lattice-free. We show that then, $Q \simeq Q_i$ for some $i \in \{1, \ldots, 5\}$.
It is known, see for example \cite[Proposition 3.1]{MR2890359}, that every lattice-free set in $\R^2$ is a subset of an $\R^2$-maximal lattice-free set.
Thus, there exists an $\R^2$-maximal lattice-free set $L$ such that $Q \subseteq L$.
By applying a suitable unimodular transformation, we can assume $L \in \cL_i$ with $i \in \{1, \ldots, 5\}$, where $\cL_1,\ldots, \cL_5$ are defined as in 
Theorem~\ref{thm:dey_wolsey}. We observe that one has
\begin{align}
 \conv(L \cap \Z^2) \subseteq Q \label{eq:q_l_integral}
\end{align}
and
\begin{align}
Q \subseteq \conv\Big(L \cap \frac{1}{2}\Z^2\Big), \label{eq:q_l_halfintegral}
\end{align}
where \eqref{eq:q_l_integral} holds since $Q$ is $\Z^2$-maximal and \eqref{eq:q_l_halfintegral} holds since $Q \in \cP(\frac{1}{2}\Z^2)$ and $Q \subseteq L$. 

{\em Case 1: $L \in \cL_1$.}
Then $L = [0,1] \times \R$ and hence, \eqref{eq:q_l_integral} together with \eqref{eq:q_l_halfintegral} yields $Q = L$.

\begin{figure}[h!]
\begin{center}
\begin{tabular}{ccc}
 \begin{tikzpicture}[scale=1]
\fill[lightgray] (0,1) -- (1,1) -- (1,3) -- (0,3) --cycle;

\fill[lightgray] (1,0) -- (1,1) -- (4,1) -- (4,0) --cycle;

\fill[gray] (0,0) -- (1,0) -- (1,1) -- (0,1) --cycle;

             \draw[step=0.5, black, dotted] (0,0) grid (4,3);
\fill[black] (0,1.5) circle (0.3mm) (0,2) circle (0.3mm)
(0.5,1.5) circle (0.3mm) (0.5,2) circle (0.3mm) (1.5,0.5) circle (0.3mm) (2,0.5) circle (0.3mm)
(2.5,0.5) circle (0.3mm) (3,0.5) circle (0.3mm)
(3.5,0.5) circle (0.3mm) (4,0.5) circle (0.3mm)
(3.5,0) circle (0.3mm) (0,2.5) circle (0.3mm) (0.5,2.5) circle (0.3mm) (0.5,3) circle (0.3mm)
(1.5,0) circle (0.3mm) (2,0) circle (0.3mm) (2.5,0) circle (0.3mm);
            \foreach \x in {0,...,4}
            {
                \foreach \y in {0,...,3}
                {
                    \fill[white,draw=black, thick] (\x,\y) circle (0.5mm);
                }
            }
  \end{tikzpicture}
&
\hspace{1cm}
&
  \begin{tikzpicture}[scale=1]
\fill[lightgray] (0,1) -- (1,1) -- (1,3) -- (0,3) --cycle;
\fill[lightgray] (0,0) -- (-1.5,0) -- (-1.5,1) -- (0,1) --cycle;
\fill[lightgray] (1,0) -- (3,0) -- (3,1) -- (1,1) --cycle;
\fill[gray] (0,0) -- (1,0) -- (1,1) -- (0,1) --cycle;

             \draw[step=0.5, black, dotted] (-1.5,0) grid (3,3);
\fill[black] (0.5,1.5) circle (0.3mm) (0.5,2) circle (0.3mm)
(0.5,2.5) circle (0.3mm) (0.5,3) circle (0.3mm)
(1.5,0.5) circle (0.3mm) (1.5,0) circle (0.3mm)
(2,0.5) circle (0.3mm) (3,0.5) circle (0.3mm)
(2.5,0.5) circle (0.3mm) (2.5,0) circle (0.3mm)
(-0.5,0) circle (0.3mm) (-0.5,0.5) circle (0.3mm) (-1.5,0.5) circle (0.3mm) (-1.5,0) circle (0.3mm)
(-1,0.5) circle (0.3mm);
            \foreach \x in {-1,...,3}
            {
                \foreach \y in {0,...,3}
                {
                    \fill[white,draw=black, thick] (\x,\y) circle (0.5mm);
                }
            }
  \end{tikzpicture}
\\ Case 2 &
& Case 3 \\
        \begin{tikzpicture}[scale=1]
            \fill[lightgray] (0,0) -- (1,-1) -- (1,-3/2) -- (0,-3/2) -- cycle;
            \fill[lightgray] (1,0) -- (1,1) -- (2,1) -- (2,0) -- cycle;
            \fill[lightgray] (0,1) -- (-1,1) -- (-3/2,3/2) -- (-3/2,2) -- (-1,2) -- cycle;
            \fill[gray] (0,0) -- (1,0) -- (0,1) -- cycle;
            \draw[step=0.5, black, dotted] (-1.5,-1.5) grid (2,2);
            \fill[black] (-0.5,1) circle (0.3mm)  (-1,1.5) circle
(0.3mm) (-1.5,2) circle (0.3mm) (1,0.5) circle (0.3mm)  (1.5,0.5)
circle (0.3mm) (2,0.5) circle (0.3mm)   (0.5,-0.5) circle (0.3mm)
(0.5,-1) circle (0.3mm) (0.5,-1) circle (0.3mm) (0.5,-1.5) circle
(0.3mm);
circle (0.3mm);
            \foreach \x in {-1,...,2}
            {
                \foreach \y in {-1,...,2}
                {
                    \fill[white,draw=black, thick] (\x,\y) circle (0.5mm);
                }
            }
        \end{tikzpicture}
&
\hspace{1cm}
&
 \begin{tikzpicture}[scale=1]
\fill[lightgray] (0,1) -- (1,1) -- (1,2) -- (0,2) --cycle;
\fill[lightgray] (0,0) -- (-1,0) -- (-1,1) -- (0,1) --cycle;
\fill[lightgray] (1,0) -- (2,0) -- (2,1) -- (1,1) --cycle;
\fill[lightgray] (0,0) -- (0,-1) -- (1,-1) -- (1,0) --cycle;
\fill[gray] (0,0) -- (1,0) -- (1,1) -- (0,1) --cycle;

             \draw[step=0.5, black, dotted] (-1,-1) grid (2,2);
\fill[black] 
(0.5,1.5) circle (0.3mm) (0.5,2) circle (0.3mm)
(0.5,-0.5) circle (0.3mm) (0.5,-1) circle (0.3mm)
(1.5,0.5) circle (0.3mm) (2,0.5) circle (0.3mm)
(-0.5,0.5) circle (0.3mm) (-1,0.5) circle (0.3mm);
            \foreach \x in {-1,...,2}
            {
                \foreach \y in {-1,...,2}
                {
                    \fill[white,draw=black, thick] (\x,\y) circle (0.5mm);
                }
            }
  \end{tikzpicture}
\\ Case 4 &
& Case 5 \\
\end{tabular}
\caption{Illustration of the proof of Theorem~\ref{thm:half-integral}: $Q$ is contained in the shaded area, the white dots are the elements of $\Z^2$
and the black dots are the elements of $\frac{1}{2}\Z^2$ involved in the proof.}\label{fig:cases_in_hi_proof}
\end{center}
\end{figure}
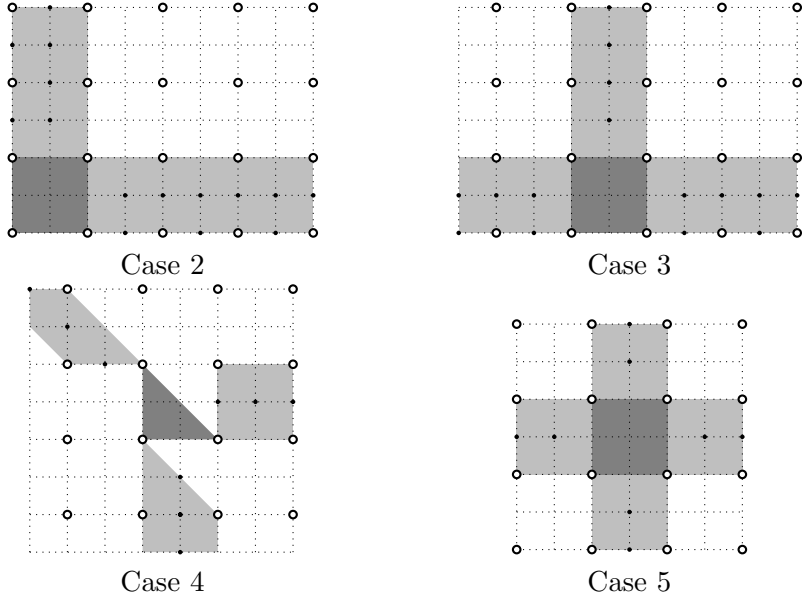

{\em Case 2: $L \in \cL_2$.}
In view of \eqref{eq:q_l_integral}, we have $[0,1]^2 \subseteq Q$. 
From the definition of $\cL_2$, we have that every point of $L \cap \left(\frac{1}{2}\Z^2 \setminus [0,1]^2\right)$
has one of the following forms:
\begin{align*}
&\left(3/2,0 \right) + k \left(1/2,0 \right), &
 \left(0,3/2\right) + k\left(0, 1/2\right), \\
& \left(3/2,1/2 \right)+ k \left(1/2,0 \right),&
 \left(1/2, 3/2\right) + k\left(0,1/2\right),
\end{align*}
where $k$ is some non-negative integer; see also Figure~\ref{fig:cases_in_hi_proof}.
Let us first assume $(\frac{3}{2}, \frac{1}{2})$ and $(\frac{1}{2}, \frac{3}{2})$ are both in $Q$. Since
$(1,1)$ is contained in the relative interior of the line segment between these two points, 
$\left[(\frac{3}{2}, \frac{1}{2}),(\frac{1}{2}, \frac{3}{2})\right]$ is contained in an edge of $Q$. Consequently, $Q \subseteq \conv \big((0,0), (2,0), (0,2)\big) = Q_2$ and
since $Q$ is $\Z^2$-maximal, this implies the equality $Q = Q_2$. 
Let us now assume that
$(\frac{1}{2}, \frac{3}{2})$ or $(\frac{3}{2}, \frac{1}{2})$ is not in $Q$. 
By applying a linear unimodular transformation that swaps $e_1$ and $e_2$ if necessary,
we can assume $(\frac{1}{2}, \frac{3}{2}) \not\in Q$. 
This implies that no point of the form
$ \left(\frac{1}{2}, \frac{3}{2}\right) + k\left(0,\frac{1}{2}\right)$ is in $Q$.
Likewise, since $(1,1) \in Q$ and $(\frac{1}{2}, \frac{3}{2}) \in [(1,1),(0,2)]$,
we have $(0,2) \not\in Q$ and consequently, $\left(0,\frac{3}{2}\right) + k\left(0, \frac{1}{2}\right) \not\in Q$ for every $k  \ge 1$. 
Hence, either $(0,3/2) \in Q$ or $Q \subseteq [0,+\infty) \times [0,1]$. 
If $(0,3/2) \in Q$, we have $Q \subseteq \conv \big((0,0), (0,3/2), (3,0)\big) \simeq Q_3$ and the $\Z^2$-maximality of $Q$ implies the equality $Q \simeq Q_3$.
Otherwise, we then have $Q \subseteq [0, +\infty) \times [0,1] \varsubsetneq \R \times [0,1]$,
a contradiction to the $\Z^2$-maximality of $Q$. 

{\em Case 3: $L \in \cL_3$.}
In view of \eqref{eq:q_l_integral}, we have $[0,1]^2 \subseteq Q$.
From the definition of $\cL_3$, we have that every point in $L \cap \left(\frac{1}{2}\Z^2 \setminus [0,1]^2\right)$ has one of the following forms:
\begin{align*}
 &\left( 3/2,0 \right) + k \left(1/2,0 \right), & & \left(-1/2,0\right) + k\left(-1/2,0\right), &\left(1/2,3/2\right) + k\left(0, 1/2\right), \\
 &\left(3/2,1/2 \right)+ k \left(1/2,0 \right), & &
 \left(-1/2, 1/2\right) + k\left(-1/2,0\right), &
\end{align*}
where $k$ is some non-negative integer; see also Figure~\ref{fig:cases_in_hi_proof}.

Analogously to Case 2, we can now restrict the values of $k$ for each of the five forms: if a point of any of the forms is in $L$ for which the value of $k$ is too large, then, because $(0,1)$ and $(1,1)$ are in the boundary of $L$, we have that $L \cap \frac{1}{2}\Z^2$ is contained in $\R \times [0,1]$ or $[0,1] \times \R$. 
In those cases, in view of \eqref{eq:q_l_halfintegral}, $Q$ is contained in a lattice-free unbounded polyhedron in $\cP(\Z^2)$. Since $Q$ is bounded,
this inclusion is strict, a contradiction
to the $\Z^2$-maximality of $Q$. This leaves a finite number of points in $\frac{1}{2}\Z^2$
which can be in $Q$ and from \eqref{eq:q_l_halfintegral}, we obtain that the vertices of $Q$ which are not in $\{0,1\}^2$ are among the points
\[ \big(-1,0\big), \big(- \frac{1}{2},0\big),  \big(-\frac{1}{2},\frac{1}{2}\big), \big(\frac{1}{2},1\big),
\big(\frac{1}{2},\frac{3}{2}\big), \big(\frac{3}{2},0\big),\big(\frac{3}{2},\frac{1}{2}\big),\big(2,0\big).
\]
It can be checked in a straightforward way that the only $\Z^2$-maximal lattice-free polygons in $\cP(\frac{1}{2}\Z^2)$ with vertices in this set
are $Q_3$ and $Q_4$.

{\em Case 4: $L \in \cL_4$.}
Here, we deduce $\Delta := \conv\big((0,0),(0,1),(1,0)\big) \subseteq Q$ from \eqref{eq:q_l_integral}. From the definition of $\cL_4$, we have that every point of $L \cap \left(\frac{1}{2}\Z^2 \setminus \Delta \right)$ has one of the following forms:
\begin{align*}
\left(1, 1/2\right) + k \left(1/2,0\right), \qquad \left(1/2,-1/2\right) + k \left(0, - 1/2\right), \qquad \left(-1/2, 1\right) + k \left(- 1/2, 1/2 \right),
\end{align*}
where $k$ is some non-negative integer; see also Figure~\ref{fig:cases_in_hi_proof}. As in the previous cases, we exploit the fact that $(0,0),(0,1),(1,0)$ lie in the boundary of $L$ to obtain
bounds on $k$ for each of the three forms: if $k$ is too large, we again deduce that $Q$ is contained in an unbounded set unimodularly equivalent to $\R \times [0,1]$. Since $Q$ is bounded, this contradicts the $\Z^2$-maximality of $Q$.
In fact, we have $k < 2$ for each of the three forms. In view of \eqref{eq:q_l_halfintegral}, this yields a finite set of points of $\frac{1}{2}\Z^2$ which can be vertices of $Q$. The straightforward verification that no lattice-free half-integral polygon with vertices in this set is $\Z^2$-maximal is left to the reader.

{\em Case 5: $L \in \cL_5$.}
In view of \eqref{eq:q_l_integral}, we have $[0,1]^2 \subseteq L$. Furthermore, it is easy to see that every point of
$(L \cap \frac{1}{2}\Z^2) \setminus \{0,1\}^2$ has one of the following forms:
\begin{align*}
\left(\frac{1}{2},\frac{1}{2}\right) \pm k\left(\frac{1}{2},0\right), \qquad \left(\frac{1}{2},\frac{1}{2}\right) \pm k\left(0, \frac{1}{2}\right),
\end{align*} 
where $k$ is an integer and $k \ge 2$; see also Figure~\ref{fig:cases_in_hi_proof}. 
One can observe that, since the points of $\{0,1\}^2$ are in the boundary of $Q$, if there exists a point of any of the four forms with $k > 2$ we have that 
$Q \subseteq L \cap \frac{1}{2}\Z^2$ is contained in $\R \times [0,1]$ or $[0,1] \times \R$. Since $Q$ is bounded, this contradicts the $\Z^2$-maximality of $Q$.
This yields $Q \subseteq Q_5$. It is straightforward to check that every $\frac{1}{2}\Z^2$-maximal integral polygon properly contained in $Q_5$ is not
$\Z^2$-maximal, which implies $Q = Q_5$.

We have shown that every $\Z^2$-maximal polyhedron in $\cP(\frac{1}{2}\Z^2)$ is unimodularly equivalent to $Q_i$ for some $i \in \{1, \ldots, 5\}$.
It is easy to see that for every $i \in \{1, \ldots, 5\}$, the polyhedron $Q_i$ is also $\R^2$-maximal as all of its edges are blocked. This yields
the equivalence of $\Z^2$-maximality and $\R^2$-maximality for lattice-free polyhedra in $\cP(\frac{1}{2}\Z^2)$.
\end{proof}

\begin{remark}\label{rem:one}
One can ask the question whether $\Z^d$-maximality and $\R^d$-maximality
are equivalent for lattice-free polyhedra in $\cP(\frac{1}{s}\Z^d)$ for given $s \in \N$. The previous theorem
shows the equivalence for $d=2$ and $s=2$.
This answers a question from \cite[Section 7.4]{wagner_diss}. Furthermore, since $\cP(\Z^2) \subseteq \cP(\frac{1}{2}\Z^2)$,
this equivalence also holds for $d=2$ and $s=1$.
\end{remark}

\section{$\Z^d$-maximal lattice-free integral polytopes with lattice width two}\label{sec:lw_two_dim_gen}%

We will now prove that every $\Z^3$-maximal lattice-free polyhedron in $\cP(\Z^3)$
is also $\R^3$-maximal. As explained in the proof strategy, we only need to prove this for polytopes, because for unbounded
polyhedra in $\cP(\Z^3)$ the equivalence of $\Z^3$-maximality and $\R^3$-maximality is straightforward to see in view of \cite[Proposition 3.1]{MR2855866}.

In the beginning of the previous section, we remarked that $\Z^2$-maximal lattice-free polytopes in $\cP(\frac{1}{2}\Z^2)$ appear naturally when considering $\Z^3$-maximal lattice-free polytopes in $\cP(\Z^3)$. 
More precisely, they appear as hyperplane sections of $\Z^3$-maximal lattice-free polytopes in $\cP(\Z^3)$, 
as we will explain in detail in this section. However, where possible, we do not restrict ourselves to dimension three 
but instead present the argumentation concerning lattice-free integral polytopes with lattice width two in arbitrary dimension $d \ge 3$. 
Our general setting is the following: let $P \in \cP(\Z^d)$ be a $\Z^d$-maximal lattice-free polytope with lattice width two. 
Then there exists a primitive vector $u \in \Z^d \setminus \{o\}$ such that $\width{P,u} = 2$. Changing the coordinates using an appropriate linear unimodular
transformation, we can always assume $u$ to be $e_d$ and using some translation by an integral vector if necessary, we can furthermore without loss
of generality assume $P \subseteq \R^{d-1} \times [-1,1]$.

\subsection*{Properties of faces and cross-sections}

If $P \in \cP(\Z^d)$ is contained in $\R^{d-1} \times [-1,1]$, then
the integral points of $P$ are contained in $\R^{d-1} \times \{-1,0,1\}$ and in particular, the latter set contains all vertices of $P$.
Note that if moreover $P$ has lattice width two, then $\R^{d-1} \times \{0\}$ meets the interior of $P$ and since $P$ is lattice-free,
this allows us to derive a characterization of $P \cap \R^{d-1} \times \{0\}$. 
We then provide relations between $P \cap \R^{d-1} \times \{0\}$ and $F(P,e_d), F(P,-e_d)$.
For the characterization of $P \cap \R^{d-1} \times \{0\}$, we need the following lemma.

	\begin{lemma}\label{prop:aux_basic_convexity}
		Let $H$ be an affine subspace of $\R^d$. Let $A, B$ be convex subsets of $\R^d$. Then
                the following statements hold:
\begin{itemize}
 \item[(a)]\label{A:subset:B:arbitrary} If $A \subseteq H$, then $
			H \cap \conv(A \cup B) = \conv\big(A \cup (H \cap B)\big).$
 \item[(b)]\label{cross-section:of:prismatoid} If $A \not\subseteq H$, $B \not\subseteq H$ and $(1-\lambda) A + \lambda B \subseteq H$ for some $0 < \lambda < 1$, then
\[			H \cap \conv( A \cup B) = (1-\lambda) A + \lambda B.
		\]
\end{itemize}
	\end{lemma}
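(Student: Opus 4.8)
The plan is to reduce both parts to the elementary representation of the convex hull of a union of two convex sets: if $A$ and $B$ are nonempty convex sets, then
\[
\conv(A \cup B) = \setcond{(1-t) a + t b}{a \in A,\ b \in B,\ t \in [0,1]}.
\]
The inclusion $\supseteq$ is clear, and for $\subseteq$ one groups any convex combination of points of $A \cup B$ into its $A$-part and its $B$-part and normalizes each, using convexity of $A$ and $B$. The degenerate cases $A = \emptyset$ or $B = \emptyset$ are immediate and can be dispatched separately, so I would assume $A, B \neq \emptyset$ throughout.

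For part~(a), the inclusion $\supseteq$ is formal: $A \subseteq H$ and $H \cap B \subseteq H$ give $A \cup (H \cap B) \subseteq H$, hence $\conv\big(A \cup (H\cap B)\big) \subseteq H$ since $H$ is affine and thus convex; and $A \cup (H \cap B) \subseteq A \cup B$ yields the inclusion into $\conv(A \cup B)$. For $\subseteq$, I would take $x \in H \cap \conv(A \cup B)$ and write $x = (1-t)a + tb$ with $a \in A \subseteq H$, $b \in B$, $t \in [0,1]$. If $t = 0$ then $x = a \in A$ and we are done; if $t > 0$, then $b = \tfrac1t x - \tfrac{1-t}{t} a$ is an affine combination of the two points $x, a \in H$, so $b \in H$, whence $b \in H \cap B$ and $x \in \conv\big(A \cup (H \cap B)\big)$. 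The only nontrivial input is that an affine subspace is closed under affine combinations of its points.

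For part~(b), the key structural observation is that the hypothesis $(1-\lambda)A + \lambda B \subseteq H$ forces $A$ and $B$ each to lie in a single translate of the linear subspace $V$ parallel to $H$. Indeed, fixing $a_0 \in A$, $b_0 \in B$ and subtracting $(1-\lambda)a_0 + \lambda b_0 \in H$ from $(1-\lambda)a + \lambda b_0 \in H$ shows $(1-\lambda)(a - a_0) \in V$, hence $a - a_0 \in V$ since $\lambda \neq 1$; thus $A \subseteq a_0 + V$ and symmetrically $B \subseteq b_0 + V$. The assumptions $A \not\subseteq H$, $B \not\subseteq H$ then say that these two translates differ from $H$. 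I would next choose a linear functional $\phi$ vanishing on $V$ (hence constant on each translate of $V$) that genuinely separates the translate containing $A$ from $H$; writing $\phi \equiv c$ on $H$, $\phi \equiv \alpha$ on $A$, $\phi \equiv \beta$ on $B$, we have $\alpha \neq c$, and the relation $(1-\lambda)\alpha + \lambda\beta = c$ together with $\alpha \neq c$ forces $\beta \neq c$ and $\alpha \neq \beta$ (the levels $\alpha,\beta$ lie strictly on opposite sides of $c$). Finally, for $x \in H \cap \conv(A\cup B)$ written as $x = (1-t)a + tb$, applying $\phi$ gives $(1-t)\alpha + t\beta = c = (1-\lambda)\alpha + \lambda\beta$, and since $\alpha \neq \beta$ this yields $t = \lambda$; hence $x \in (1-\lambda)A + \lambda B$. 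The reverse inclusion is immediate, since $(1-\lambda)A + \lambda B \subseteq H$ by hypothesis and $(1-\lambda)a + \lambda b \in [a,b] \subseteq \conv(A \cup B)$.

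The main difficulty is part~(b): the single containment $(1-\lambda)A + \lambda B \subseteq H$ must be leveraged into the global statement that $A$ and $B$ each lie in one translate of $V$, with their $\phi$-levels strictly on opposite sides of that of $H$; once this is in place, evaluating $\phi$ pins the crossing parameter $t$ down to $\lambda$ uniformly in $a$ and $b$. The one subtlety requiring care, when $H$ is not a hyperplane, is to choose $\phi$ so that it really separates the translate containing $A$ from $H$, which is possible precisely because $A \not\subseteq H$.
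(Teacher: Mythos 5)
Your proof is correct and follows essentially the same route as the paper's: part (a) by writing a point of $H \cap \conv(A\cup B)$ as a convex combination and observing that $b$ lies on the line through $x$ and $a$ inside $H$, and part (b) by evaluating a scalar function that is constant on $H$ and takes constant, distinct values on $A$ and $B$, which pins the mixing parameter to $\lambda$. The one difference is that you choose a linear functional vanishing on the direction space of $H$ after first showing $A$ and $B$ each lie in a translate of that space, which handles an affine subspace $H$ of arbitrary codimension, whereas the paper's choice of a single affine function $f$ with $H=f^{-1}(0)$ implicitly assumes $H$ is a hyperplane (the only case it needs).
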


	\begin{proof}
		{\em (a):} Let $A \subseteq H$. It suffices to verify the inclusion $H \cap \conv(A \cup B) \subseteq \conv\big(A \cup (H \cap B)\big)$; 
		the reverse inclusion is trivial, as both $A$ and $H \cap B$ are subsets of $H \cap \conv(A \cup B)$. 
		Consider an arbitrary $x \in H \cap \conv(A \cup B)$. 
		That is, $x$ is a point of $H$ that can be written as a convex combination of some $a \in A$ and $b \in B$. If $x = a$, then in particular $x \in A$ and we are done. If not, then $b$ lies on the line passing through $x$ and $a$. Since $x,a \in H$, this line lies in $H$ and therefore, we also have $b \in H$. 

		{\em (b):} Let $A \not\subseteq H$ and $B \not\subseteq H$. 
It suffices to verify the inclusion $H \cap \conv(A \cup B) \subseteq (1-\lambda) A + \lambda B$, as the reverse inclusion is trivial. 
Obviously, one has $\conv(A \cup B) = \bigcup_{\mu \in [0,1]} ((1-\mu) A + \mu B))$. 
Now we fix an affine function $f : \R^d \rightarrow \R$ such that $H = \setcond{x \in \R^d}{f(x) = 0}$. 
Since $H \neq \R^d$, the function $f$ is not identically equal to zero. 
Applying $f$ to $(1-\lambda) A + \lambda B \subseteq H$, we arrive at $(1-\lambda) f(A) + \lambda f(B) \subseteq f(H)$, 
where by the choice of $f$, one has $f(H) = \{0\}$. In view of $0 < \lambda < 1$, the latter shows that $f(A)$ and $f(B)$ must be singletons. 
We interpret the singletons $f(A), f(B), f(H)$ as real values, where $f(H) = 0$. Let $f(A) = \alpha$ and $f(B) = \beta$. 
Since $A$ and $B$ are not subsets of $H$, we have $\alpha,\beta \neq 0$ and in view of $(1- \lambda) \alpha + \lambda \beta = 0$, 
we have $\alpha \neq \beta$.
Hence, for $\mu \in [0,1]$, one has $(1 - \mu) \alpha + \mu \beta = 0$ if and only if $\mu = \lambda$ and, 
thus, we have shown that $(1 - \mu) A + \mu B$ is disjoint with $H$ unless $\mu = \lambda$. Consequently, we obtain (b).
\end{proof}

\begin{theorem}\label{lem:mega_lemma}
 Let $P \in \cP(\Z^d)$ be a $\Z^d$-maximal lattice-free polytope such that $P \subseteq \R^{d-1} \times [-1,1]$ and $\width{P,e_d} = 2$. 
For $i \in \{-1,0,1\}$, let $P_i := \setcond{x \in \R^{d-1}}{(x,i) \in P}$. Then the following statements hold:
\begin{enumerate}[label=(\alph*)]
\item\label{item:part_one}	
$P_0$ belongs to $\cP(\frac{1}{2}\Z^{d-1})$ and is $\Z^{d-1}$-maximal lattice-free.
\item\label{item:part_two} 
$P_{-1}$ and $P_1$ belong to $\cP(\Z^{d-1})$ and have the following properties relative to $P_0$:
\begin{enumerate}[label=(b\arabic*)]
			\item\label{eq:layers_inclusion_up} $P_1$ and $P_{-1}$ are integral polytopes satisfying
\begin{align}
 \label{eq:mink_sum_up} 
P_1 + P_{-1} \subseteq 2P_0. 
\end{align}
			\item\label{eq:layers_maximality} The pair $(P_1, P_{-1})$ satisfies the following maximality condition:
for polytopes $R_{-1}, R_1 \in \cP(\Z^{d-1})$, the inclusions $P_1 \subseteq R_1$,
$P_{-1} \subseteq R_{-1}$ and $R_1 + R_{-1} \subseteq 2P_0$ imply $R_1 = P_1$ and $R_{-1} = P_{-1}$.
			\item\label{eq:layers_inclusion_low} $P_1$ and $P_{-1}$ satisfy
\begin{align}
 \label{eq:mink_sum_lhs} 
 2\conv(\vertset{P_0} \setminus \Z^{d-1}) \subseteq P_1 + P_{-1}. 
\end{align}
\end{enumerate}
\end{enumerate}
\end{theorem}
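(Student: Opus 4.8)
The plan is to exploit repeatedly the elementary fact that, since $P\subseteq\R^{d-1}\times[-1,1]$ is integral, all integral points of $P$ — in particular all its vertices — lie in $\R^{d-1}\times\{-1,0,1\}$, so that $P = \conv\big((P_{-1}\times\{-1\})\cup(P_0\times\{0\})\cup(P_1\times\{1\})\big)$. Because $\width{P,e_d}=2$, the hyperplane $H_0 := \R^{d-1}\times\{0\}$ meets $\intr{P}$, whence $\intr{P}\cap H_0 = \relintr{P_0}\times\{0\}$; and since the interior of any polytope $Q$ with $Q\subseteq\R^{d-1}\times[-1,1]$ lies in the open slab $\R^{d-1}\times(-1,1)$, whose only integral points have height $0$, every $Q$ with $P\subseteq Q\subseteq\R^{d-1}\times[-1,1]$ is lattice-free if and only if its zero-level section is lattice-free in $\R^{d-1}$. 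To prove \ref{item:part_one} I would first read off the vertices of the section $P_0$: each is either a vertex of $P$ in layer $0$ (hence integral) or the transversal intersection of $H_0$ with an edge of $P$ joining layer $-1$ to layer $+1$, hence the midpoint of two integral points and so an element of $\frac12\Z^{d-1}$; this gives $P_0\in\cP(\frac12\Z^{d-1})$, while lattice-freeness of $P_0$ is the section fact above. For $\Z^{d-1}$-maximality I argue by contraposition: if some $y\in\Z^{d-1}\setminus P_0$ kept $\conv(P_0\cup\{y\})$ lattice-free, then for $P' := \conv(P\cup\{(y,0)\})$ Lemma \ref{prop:aux_basic_convexity}(a) (with $H=H_0$, $A=\{(y,0)\}$, $B=P$) yields zero-level section $\conv(P_0\cup\{y\})$, so $P'$ is lattice-free although $(y,0)\in\Z^d\setminus P$, contradicting the $\Z^d$-maximality of $P$.

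For part \ref{item:part_two}, integrality of $P_1$ and $P_{-1}$ is immediate, since $P_{\pm1}\times\{\pm1\}=F(P,\pm e_d)$ are faces of the integral polytope $P$. Inclusion \eqref{eq:mink_sum_up} is midpoint convexity: for $p\in P_1$, $q\in P_{-1}$ the midpoint of $(p,1),(q,-1)\in P$ is $(\tfrac{p+q}{2},0)\in P$, so $p+q\in 2P_0$. The maximality condition \ref{eq:layers_maximality} is the heart of the statement. Given $R_{\pm1}\in\cP(\Z^{d-1})$ with $P_{\pm1}\subseteq R_{\pm1}$ and $R_1+R_{-1}\subseteq 2P_0$, I would introduce the auxiliary polytope $\widetilde{P} := \conv\big((R_1\times\{1\})\cup(R_{-1}\times\{-1\})\cup(P_0\times\{0\})\big)\supseteq P$ and compute its zero-level section: by Lemma \ref{prop:aux_basic_convexity}(b) the section of $\conv((R_1\times\{1\})\cup(R_{-1}\times\{-1\}))$ is $\tfrac12(R_1+R_{-1})$, and feeding this into Lemma \ref{prop:aux_basic_convexity}(a) gives $\widetilde{P}\cap H_0 = \conv(P_0\cup\tfrac12(R_1+R_{-1}))\times\{0\} = P_0\times\{0\}$, the last equality using $R_1+R_{-1}\subseteq 2P_0$. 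Thus $\widetilde{P}$ shares the lattice-free zero-level section of $P$ and is therefore lattice-free. If now $R_1\neq P_1$, integrality forces an integral point $y\in R_1\setminus P_1$; then $(y,1)\in\widetilde{P}$ is an integral point outside $P$, and $\conv(P\cup\{(y,1)\})\subseteq\widetilde{P}$ is lattice-free, again contradicting the $\Z^d$-maximality of $P$. Hence $R_1=P_1$, and symmetrically $R_{-1}=P_{-1}$.

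Finally, for \eqref{eq:mink_sum_lhs} I return to the vertex description from part \ref{item:part_one}: a vertex $v$ of $P_0$ that is not integral cannot be a vertex of $P$, so it is the midpoint of an edge $[(a,-1),(b,1)]$ of $P$ with $a\in P_{-1}\cap\Z^{d-1}$ and $b\in P_1\cap\Z^{d-1}$; thus $2v = a+b\in P_1+P_{-1}$. Taking convex hulls over all such $v$ and using that scaling commutes with $\conv$ gives $2\conv(\vertset{P_0}\setminus\Z^{d-1})\subseteq P_1+P_{-1}$. I expect the main obstacle to be the maximality condition \ref{eq:layers_maximality}: one must guess the right auxiliary polytope $\widetilde{P}$ and, crucially, show that its zero-level section collapses back to $P_0$ (so that no new interior integral point appears), after which the reduction to adding the single integral point $(y,1)$ makes the $\Z^d$-maximality of $P$ applicable. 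The remaining verifications — that section vertices are integral points or midpoints of crossing edges, and the midpoint-convexity inclusions — are routine.
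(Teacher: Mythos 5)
Your proposal is correct and follows essentially the same route as the paper: the same two-part convexity lemma for hyperplane sections, the same computation showing that the zero-level section of the enlarged polytope collapses back to $P_0$, and the same auxiliary polytope $\conv\big((R_1\times\{1\})\cup(R_{-1}\times\{-1\})\cup(P_0\times\{0\})\big)$ for the maximality condition (b2). The only deviations are cosmetic and harmless: you phrase the $\Z^{d-1}$-maximality of $P_0$ contrapositively rather than exhibiting an integral point in $\relintr{\conv(\overline{P}_0\cup\{p\})}$ directly, and in (b2) you finish by extracting an integral point of $R_1\setminus P_1$ instead of arguing that the auxiliary polytope is integral and must therefore coincide with $P$.
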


	\begin{proof}
Throughout the proof, we will use the following notation: $H$ denotes the hyperplane $\R^{d-1} \times \{0\}$ and for $i \in \{-1,0,1\}$, we write $\overline{P}_i := P_i \times \{i\}$.
We will repeatedly make use of the fact that $P \subseteq \R^{d-1} \times [-1,1]$ and hence, 
$P \cap \Z^d \subseteq \Z^{d-1} \times \{-1,0,1\}.$
In particular, taking into account $P \in \cP(\Z^d)$, we have
\begin{align}
 \vertset{P} \subseteq \oq{-1} \cup \oq{0} \cup \oq{1}. \label{eq:vertices_in_layers}
\end{align}
The assumption $\width{P,e_d} = 2$ implies $P_1 \neq \emptyset$ and $P_{-1} \neq \emptyset$.

		Consider
		\begin{align*}
		A := \conv(H \cap \vertset{P}) \qquad \textand \qquad  B := \conv\big(\oq{-1} \cup \oq{1}\big).\end{align*}
		From \eqref{eq:vertices_in_layers}, it is clear that $\conv(A \cup B) =  P$.
		Hence, in view of Lemma~\ref{prop:aux_basic_convexity}(a), we get \begin{align}\label{eq:oline_q0} \oq{0} = H \cap \conv(A \cup B)= \conv\big(A \cup (H \cap B)\big), \end{align} 
		where the second equality is a consequence of Lemma~\ref{prop:aux_basic_convexity}(a).

{\em Assertion (a):}
		We first prove that $P_0$ belongs to $\cP(\frac{1}{2}\Z^d)$, which is equivalent to $\vertset{\oq{0}} \subseteq \frac{1}{2}\Z^{d-1} \times \{0\}$. 
		In \eqref{eq:oline_q0}, the set $A$ is an integral polytope, possibly empty. 
		Thus, it suffices to verify that the vertices of $H \cap B$ belong to $\frac{1}{2} \Z^{d-1} \times \{0\}$. In fact, Lemma~\ref{prop:aux_basic_convexity}(b) yields 
		\begin{equation}
			\label{B:cap:H:eq}
			H \cap B = H \cap \conv\big(\oq{-1} \cup \oq{1}\big) = \frac{1}{2} \oq{-1} + \frac{1}{2} \oq{1}.
		\end{equation}
  Thus, since $\oq{-1}$ and $\oq{1}$ are integral polytopes, $H \cap B$ is half-integral and so is $P_0$. 
		
		Next, we show that $P_0$ is lattice-free. By \cite[Corollary 6.5.1]{MR1451876} we have that $\relintr{P \cap H} = \intr{P} \cap H$ and thus
    \begin{align*}
        \relintr{\oq{0}} \cap \Z^d &= \relintr{P \cap H} \cap \Z^d = \intr{P} \cap H \cap \Z^d\\ &\subseteq \intr{P} \cap \Z^d = \emptyset.
    \end{align*}
    Hence, $\relintr{ \oq{0} }$ does not contain points of $\Z^{d-1} \times \{0\}$ and thus, $ P_0 $ is lattice-free.

		To complete the proof of \ref{item:part_one}, it remains to show that $P_0$ is $\Z^{d-1}$-maximal. 
    Let $ p \in \Z^{d-1} \times \{0\} \setminus \oq{0} $.
    As~$ p \notin P $ and~$ P $ is $ \Z^d $-maximal lattice-free, there exists some $ z  \in \Z^d$ in $\intrbig{\conv(P \cup \{p\})}$.
    Since~$ P $ is lattice-free, we have~$ z = \lambda p + (1-\lambda) x $ for some~$ 0 < \lambda < 1 $ and~$ x \in P $.
    Let $x_d, p_d, z_d$ denote the $d$-th component of $x,p,z$, respectively.
    Because~$ x_d \in [-1,1] $ and~$ p_d = 0 $, we obtain~$ -1 < z_d < 1 $ and hence~$ z_d = 0 $.
    This implies
    \begin{align*}
        z \in \relintrbig{\conv(P \cup \{p\})} \cap H & = \relintrbig{\conv(P \cup \{p\}) \cap H} \\
        & = \relintrbig{\conv(\oq{0} \cup \{p\})},
    \end{align*}
    where the first equality follows from \cite[Corollary 6.5.1]{MR1451876} and the second equality follows from Lemma~\ref{prop:aux_basic_convexity}(a).
    Hence,~$ P_0 $ is $ \Z^{d-1} $-maximal lattice-free.

{\em Assertion (b)}:
		Obviously, $P_{-1}, P_1$ are integral polytopes because $\oq{-1}, \oq{1}$ are faces of the integral polytope $P$.

\emph{(b1):}
		Clearly, $\frac{1}{2}\oq{-1} + \frac{1}{2}\oq{1}$ is a subset of both $P$ and $H$. This yields $\frac{1}{2}\oq{-1} + \frac{1}{2}\oq{1} \subseteq \oq{0}$
		and hence assertion (b1) follows.

		\emph{(b2):} Let $R_1, R_{-1} \subseteq \R^{d-1}$ be integral polytopes such that $R_1 + R_{-1} \subseteq 2P_0$ and $P_1 \subseteq R_1$, $P_{-1} \subseteq R_{-1}$. 
We write $\overline{R}_i = R_i \times \{i\}$ for $i  \in \{-1,1\}$. Consider the integral polytope
\[R := \conv\big(\overline{R}_{-1} \cup \overline{R}_{1} \cup \oq{0}\big),\] 
which is contained in $\R^{d-1} \times [-1,1]$ and contains $P$.
In view of Lemma~\ref{prop:aux_basic_convexity}(a), we have
\begin{align*}
R \cap H = \conv\Big(\oq{0} \cup \conv \big(\big(\overline{R}_1 \cup \overline{R}_{-1}\big) \cap H\big) \Big).
\end{align*}
Applying Lemma~\ref{prop:aux_basic_convexity}(b) yields
\[\conv \big(\overline{R}_1 \cup \overline{R}_{-1}\big) \cap H = \big(\frac{1}{2}R_1 + \frac{1}{2}R_{-1}\big) \times \{0\} \subseteq \oq{0},\] where the inclusion follows from $R_1 + R_{-1} \subseteq 2P_0$. Hence, $R \cap H = \oq{0}$.
		Therefore, one has $\relintr{H \cap R} \cap \Z^d = \relintr{\oq{0}} \cap \Z^d$.
		By \ref{item:part_one}, $P_0$ is lattice-free and hence, we have $\relintr{\oq{0}} \cap \Z^d = \emptyset$. 
		Since $R \cap \Z^d \subseteq \Z^{d-1} \times \{-1,0,1\}$, where $R \cap \Z^{d-1} \times \{-1,1\}$ is contained in the boundary of $R$,
		this shows that $R$ is a lattice-free integral polytope containing $P$. Since $P$ is $\Z^d$-maximal, we have $P = R$ and in particular,
		$R_1 = P_1$ and $R_{-1} = P_{-1}$.

		\emph{Assertion (b3):} Let $v \in \R^{d-1}$ be a non-integral vertex of $P_0$ and let $\overline{v} := (v,0) \in \R^d$. Then $\overline{v}$
		is not integral and since
		$P$ is an integral polytope, $\overline{v}\not\in \vertset{P}$. On the other hand, since $v$ is a vertex of $P_0$, $\overline{v}$ cannot be written as convex combination of any points of $P \cap H$. Thus, we have $v \not\in A$ and consequently, in view of \eqref{eq:oline_q0}, we also have $v \in H \cap B$. By \eqref{B:cap:H:eq}, there exist $x_1 \in \oq{1}$, $x_{-1} \in \oq{-1}$ such that $\overline{v} = \frac{1}{2}x_1 + \frac{1}{2}x_{-1}$. 
Accordingly, $2v \in P_1 + P_{-1}$. This proves the assertion.
	\end{proof}

\subsection*{Analysis of special cross-sections}

From Figure~\ref{fig:s2d2} on page ~\pageref{fig:s2d2}, one can make the following observation: three of the four $\Z^2$-maximal
half-integral polytopes are simplices with the property that all of their integral vertices are unimodular\index{vertex!unimodular}. It turns out that if $P_0$ in the setting of Theorem~\ref{lem:mega_lemma} is a simplex with this property, one can strengthen the assertions about the relation between $P_1,P_{-1}$ and $P_0$
presented in Theorem~\ref{lem:mega_lemma}. 

\begin{lemma}\label{lem:hom_minimal}
Let $Q \in \cP(\Z^d)$ be a polytope such that $Q$ is not a singleton and let $v$ be a vertex of $Q$. Let $g := \gcd\big(\setcond{\sprod{w}{e_i}}{w \in \vertset{Q-v}, \, i \in \{1, \ldots, d\}}\big)$, i.e., $g$ is the greatest common divisor of all the components of the vertices of $Q-v$.
Let $\tq := \frac{1}{g} (Q - v) \in \cP(\Z^d)$. Then every non-negative homothetic copy $Q'$ of $Q$ belonging to $\cP(\Z^d)$
has the form $Q' = a\tq + b$, where $a \in \N \cup \{0\}$ and $b \in \Z^d$.
\end{lemma}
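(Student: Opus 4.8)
The plan is to exploit the fact that, up to the homothety $x \mapsto g x + v$, the polytope $\tq$ is a \emph{primitive} integral representative of $Q$, namely one whose vertex coordinates have greatest common divisor $1$, and to show that any integral homothetic copy of $Q$ must be an \emph{integer} dilate of $\tq$ followed by an \emph{integer} translation. First I would record the elementary facts needed. Since $v$ is a vertex of the integral polytope $Q$, every vertex of $Q - v$ is an integral vector, one of them being $o$, and by the definition of $g$ each coordinate of each such vertex is divisible by $g$; hence the vertices $\frac{1}{g}(u - v)$ of $\tq$ (as $u$ ranges over $\vertset{Q}$) are integral, confirming $\tq \in \cP(\Z^d)$. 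Note also that $g \ge 1$, because $Q$ is not a singleton and so $Q - v$ has a nonzero vertex. I will further use the standard fact that, for a bounded polyhedron, membership in $\cP(\Z^d)$ is equivalent to integrality of all of its vertices, and that an invertible homothety $x \mapsto \lambda x + c$ with $\lambda > 0$ maps $\vertset{Q}$ bijectively onto the vertex set of $\lambda Q + c$.

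Now let $Q' = \lambda Q + c$ be a non-negative homothetic copy of $Q$ with $Q' \in \cP(\Z^d)$, where $\lambda \ge 0$ and $c \in \R^d$. If $\lambda = 0$, then $Q' = \{c\}$ is a singleton, which lies in $\cP(\Z^d)$ only if $c \in \Z^d$; in this case $Q' = 0 \cdot \tq + c$ has the desired form with $a = 0$ and $b = c$. So assume $\lambda > 0$. Substituting $Q = g\tq + v$ into $Q' = \lambda Q + c$ gives
\[
Q' = \lambda g\, \tq + (\lambda v + c),
\]
so it remains to show that $a := \lambda g \in \N$ and $b := \lambda v + c \in \Z^d$.

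For this I would read off the integrality of the vertices of $Q'$. Since $\lambda > 0$, the vertices of $Q'$ are exactly the points $\lambda u + c$ with $u \in \vertset{Q}$, and because $Q' \in \cP(\Z^d)$ each of these is integral. Applying this to the vertex $v$ yields $b = \lambda v + c \in \Z^d$ immediately. Subtracting the vertex coming from $v$ from the vertex coming from an arbitrary $u \in \vertset{Q}$ gives $\lambda(u - v) \in \Z^d$; in coordinates, $\lambda \sprod{u - v}{e_i} \in \Z$ for every $u \in \vertset{Q}$ and every $i \in \{1, \ldots, d\}$.

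The key (and only nonroutine) step is to upgrade these per-coordinate relations to integrality of the single scalar $\lambda g$. By the definition of $g$ as the greatest common divisor of the integers $\sprod{u - v}{e_i}$, Bézout's identity expresses $g$ as an integral linear combination of these numbers; multiplying that identity by $\lambda$ and using that each $\lambda \sprod{u-v}{e_i}$ is an integer shows $\lambda g \in \Z$. Since $\lambda > 0$ and $g \ge 1$, we have $\lambda g > 0$, whence $a = \lambda g \in \N$, completing the argument. The main obstacle, modest though it is, is precisely this passage from coordinatewise integrality to integrality of $\lambda g$, which is exactly what the greatest common divisor together with Bézout's identity is designed to furnish.
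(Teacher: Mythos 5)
Your proposal is correct and follows essentially the same route as the paper: identify the translation part as an integral vertex of $Q'$ (using that $o$ is a vertex of $\tq$), reduce to showing that the dilation factor relative to $\tq$ is a non-negative integer, and conclude from the definition of $g$. The only cosmetic difference is in the last step, where the paper argues by contradiction with a reduced fraction $p/q$ and a divisibility argument, whereas you invoke B\'ezout's identity directly; the two are interchangeable.
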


\begin{proof}
Let $Q' \in \cP(\Z^d)$ be a non-negative homothetic copy of $Q$.
Since obviously $Q'$ is then also a non-negative homothetic copy of $\tq$, there exist
$a \ge 0$ and $b \in \R^d$ such that $Q' = a \tq + b$. 
By definition of $\tq$, one of its vertices is $o$.
Hence, $b \in \vertset{Q'}$ and therefore, in view of $Q' \in \cP(\Z^d)$, we have $b \in \Z^d$.
Thus, $Q' - b \in \cP(\Z^d)$ and hence from
$\vertset{Q' -b} = \frac{a}{g}(\vertset{Q} - v) \subseteq \Z^d$ we get
$\frac{a}{g}\sprod{w'-v}{e_i} \in \Z$ for every $w' \in \vertset{Q}$ and every $i \in \{1, \ldots, d\}$. 
Suppose now that $a$ is not an integer,
i.e. $a$ has a representation as $p/q$ with $p \in \N$ and $q \in \N \setminus \{1\}$ being relatively prime. 
Then $q$ divides $\frac{\sprod{w' - v}{e_i}}{g}$ for every $w' \in \vertset{Q}$ and every $i \in \{1, \ldots, d\}$ and hence,
$q g$ divides $\sprod{w}{e_i}$ for every $w \in \vertset{Q-v}$ and every $i \in \{1, \ldots, d\}$, which contradicts the definition of $g$.
\end{proof}

\begin{theorem}\label{lem:homothetic}
 Let $P, P_0, P_1, P_{-1}$ be as in Theorem~\ref{lem:mega_lemma}. If $P_0$ is a $(d-1)$-dimensional simplex such that all integral vertices of $P_0$ are unimodular,
then the following statements hold:
\begin{itemize}
 \item[(a)] $P_1$ and $P_{-1}$ are non-negative homothetic copies of $P_0$,
 \item[(b)] $P_1 + P_{-1} = 2P_0$.
\end{itemize}
\end{theorem}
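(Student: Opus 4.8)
The plan is to control $P_1$ and $P_{-1}$ by circumscribing them with homothetic copies of the simplex $P_0$ and then to invoke the maximality condition \eqref{eq:mink_sum_up} together with \ref{eq:layers_maximality}. Write $P_0 = \setcond{x \in \R^{d-1}}{\sprod{a_i}{x} \le c_i \text{ for } i = 1, \ldots, d}$, where $a_1, \ldots, a_d$ are the primitive outer facet normals, and for $j \in \{1,\ldots,d\}$ let $w_j$ denote the vertex of $P_0$ opposite the $j$-th facet, i.e. the unique point with $\sprod{a_i}{w_j} = c_i$ for all $i \ne j$. For $k \in \{1,-1\}$ I would take $R_k$ to be the smallest non-negative homothetic copy of $P_0$ containing $P_k$; the associated linear program is attained with all $d$ facet constraints tight, so that $R_k = \setcond{x}{\sprod{a_i}{x} \le c_i^{(k)} \text{ for all } i}$ with $c_i^{(k)} = h(P_k, a_i)$, meaning $R_k$ supports $P_k$ along each facet. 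Since $P_k \in \cP(\Z^{d-1})$ and each $a_i$ is integral, the offsets $c_i^{(k)}$ are integers. From $h(P_1 + P_{-1}, a_i) = c_i^{(1)} + c_i^{(-1)}$ together with \eqref{eq:mink_sum_up} one gets $c_i^{(1)} + c_i^{(-1)} \le 2 c_i$ for all $i$, and hence $R_1 + R_{-1} \subseteq 2 P_0$.

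The heart of the argument is to show that $R_1$ and $R_{-1}$ are \emph{integral}; once this is done, the maximality condition \ref{eq:layers_maximality} applied to $R_1,R_{-1}$ forces $P_1 = R_1$ and $P_{-1} = R_{-1}$, which is exactly assertion (a). The vertex of $R_k$ opposite the $j$-th facet is the unique solution of the system $\sprod{a_i}{x} = c_i^{(k)}$, $i \ne j$. If $w_j$ is an integral (hence, by hypothesis, unimodular) vertex of $P_0$, then $\setcond{a_i}{i \ne j}$ is a basis of $\Z^{d-1}$, so this system has unimodular matrix and integral right-hand side, whence its solution is integral. The more delicate case is when $w_j$ is a non-integral vertex of $P_0$: here I would use \eqref{eq:mink_sum_lhs}, which gives $2 w_j \in P_1 + P_{-1} \subseteq 2 P_0$. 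As $2 w_j$ is a vertex of $2 P_0$, it is an extreme point of $P_1 + P_{-1}$, hence decomposes uniquely as $2 w_j = v^{(1)} + v^{(-1)}$ with $v^{(k)} \in \vertset{P_k} \subseteq \Z^{d-1}$. Comparing $\sprod{a_i}{2 w_j} = 2 c_i$ ($i \ne j$) with $\sprod{a_i}{v^{(k)}} \le c_i^{(k)}$ and the inequality $c_i^{(1)} + c_i^{(-1)} \le 2 c_i$ forces $\sprod{a_i}{v^{(k)}} = c_i^{(k)}$ for every $i \ne j$; thus $v^{(1)}$ is precisely the vertex of $R_1$ opposite facet $j$, which is therefore integral. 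Hence every vertex of $R_1$ and of $R_{-1}$ is integral, so $R_1, R_{-1} \in \cP(\Z^{d-1})$ and \ref{eq:layers_maximality} yields (a).

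For assertion (b) I would exploit that, by (a), $P_1$ and $P_{-1}$ are non-negative integral homothetic copies of $P_0$, equivalently of the integral simplex $2 P_0$. Applying Lemma~\ref{lem:hom_minimal} to $Q = 2 P_0$, with $T := \widetilde{2 P_0}$ and $2 P_0 = g T + v$, I can write $P_k = a_k T + b_k$ with $a_k \in \N \cup \{0\}$ and $b_k \in \Z^{d-1}$ (and $a_k \le g$, since $P_k$ lies in a translate of $2 P_0$). From $P_1 + P_{-1} \subseteq 2 P_0$ and the simplex structure of $T$ one reads off, for the normals $\alpha_i$ and offsets $\tau_i$ of $T$, the facet inequalities $\sprod{b_1 + b_{-1} - v}{\alpha_i} \le (g - a_1 - a_{-1}) \tau_i$. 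These same inequalities show that the integral homothet $R_1 := (g - a_{-1}) T + (v - b_{-1})$ contains $P_1$ and satisfies $R_1 + P_{-1} = g T + v = 2 P_0$. Invoking \ref{eq:layers_maximality} once more (with $R_{-1} := P_{-1}$) gives $R_1 = P_1$, hence $a_1 = g - a_{-1}$ and $b_1 = v - b_{-1}$; that is, $P_1 + P_{-1} = g T + v = 2 P_0$, proving (b).

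The step I expect to be the main obstacle is the integrality of the circumscribed homothets at the vertices \emph{opposite the non-integral vertices} of $P_0$: there the defining linear system is no longer unimodular, and integrality is recovered only indirectly, by matching the relevant vertex of $R_1$ with an honest integral vertex of $P_1$ through the lower bound \eqref{eq:mink_sum_lhs} and the unique vertex decomposition of the Minkowski sum. The remaining points that still need careful verification are routine: that the minimal circumscribed homothet is well defined with supporting offsets $c_i^{(k)} = h(P_k, a_i)$, and that the normals $\alpha_i$ of $T$ positively span $\R^{d-1}$, so that the equality $b_1 + b_{-1} = v$ forced above indeed yields $P_1 + P_{-1} = 2 P_0$.
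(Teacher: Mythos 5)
Your proposal is correct and follows essentially the same route as the paper: circumscribe non-negative homothetic copies of $P_0$ with offsets $h(P_k,a_i)$, prove their integrality using the unimodularity hypothesis at the integral vertices of $P_0$ and \eqref{eq:mink_sum_lhs} at the non-integral ones, invoke the maximality condition of Theorem~\ref{lem:mega_lemma}\ref{eq:layers_maximality}, and deduce (b) from Lemma~\ref{lem:hom_minimal}. The only local differences are that at non-integral vertices you replace the paper's normal-cone argument ($N(P_1+P_{-1},2v_0)=N(P_1,v_1)\cap N(P_{-1},v_{-1})$) by an equivalent support-function tightness computation, and in (b) you anchor the complementary homothet as $(g-a_{-1})T+(v-b_{-1})$ rather than the paper's $(g-a)\tp+b'$ --- a choice that makes the inclusion $R_1+P_{-1}\subseteq 2P_0$ an identity and renders that step, if anything, slightly more transparent than in the paper.
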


\begin{proof}
{\em Assertion (a)}. Let $u_1, \ldots, u_d \in \Z^{d-1} \setminus \{o\}$ be primitive vectors such that $U(P_0) = \{u_1, \ldots, u_d\}$. We circumscribe a non-negative homothetic copy of $P_0$
around $P_1$. More precisely, we consider the polytope
\[P'_1 := \setcond{x \in \R^{d-1}}{\sprod{x}{u_j} \le h(P_1,u_j) \text{ for every } j \in \{1, \ldots, d\}},\]
which contains $P_1$ and is a non-negative homothetic copy of $P_0$ since $P'_1$ is a simplex
having the same set of outer normal facet vectors as $P_0$. 
We will now prove that $P'_1 = P_1$, which implies that $P_1$ is a non-negative homothetic copy of $P_0$.

Let $v \in \vertset{P'_1}$.
Appropriately reindexing $u_1, \ldots, u_d$, we assume 
\[
\sprod{v}{u_i} = h(P'_1,u_i) \text{ for every } i \in \{1, \ldots, d-1\}.
\]
Denote by $v_0$ the vertex of $P_0$ with the property 
\[
\sprod{v_0}{u_i} = h(P_0,u_i) \text{ for every } i \in \{1, \ldots, d-1\}. 
\]
Let us first consider the case $v_0 \not\in \Z^{d-1}$. 
In this case, \eqref{eq:mink_sum_lhs} yields $2v_0 \in P_1 + P_{-1}$. 
Thus, taking into account that $2v_0$ is a vertex of $2P_0$ and $P_1 + P_{-1} \subseteq 2P_0$, we get that $2v_0$ is a vertex
of $P_1 + P_{-1}$. 
Note that $P_1 + P_{-1} = \conv(\vertset{P_1}) + \conv(\vertset{P_{-1}}) = \conv(\vertset{P_1} + \vertset{P_{-1}})$ (see \cite[Theorem 1.1.2]{MR1216521}) and hence,
$\vertset{P_1 + P_{-1}} \subseteq \vertset{P_1} + \vertset{P_{-1}}$. Therefore,
there exist $v_1 \in \vertset{P_1}$, $v_{-1} \in \vertset{P_{-1}}$ with $2v_0 = v_1 + v_{-1}$.
In view of $P_1 + P_{-1} \subseteq 2P_0$, we have $\cone(u_1, \ldots, u_{d-1}) = N(2P_0,2v_0) \subseteq N(P_1 + P_{-1}, 2v_0)$. Furthermore, 
we have $N(P_1 + P_{-1}, 2v_0) = N(P_1,v_1) \cap N(P_{-1},v_{-1})$; see \cite[Theorem 2.2.1]{MR1216521}. Together, this yields
$\cone(u_1, \ldots, u_{d-1}) \subseteq N(P_1,v_1)$.
In other words, $u_1, \ldots, u_{d-1}$
are outer normal vectors of $P_1$ at $v_1$. Hence, we have
\[\sprod{v_1}{u_i} = h(P_1,u_i) \text{ for every } i \in \{1, \ldots, d-1\},\] 
and thus, $v$ and $v_1$ are defined by the same (uniquely solvable) system of linear equalities. Hence, $v = v_1$ and in particular, $v$ is integral.

We now treat the case that $v_0 \in \Z^{d-1}$, i.e., $v_0$ is a vertex of $P_0$. 
In this case, since all integral vertices of $P_0$ are unimodular, $u_1, \ldots, u_{d-1}$ form a basis of $\Z^{d-1}$,
i.e., the matrix with rows $u_1, \ldots, u_{d-1}$ has determinant one.
Since the vertex $v$ of $P'_1$ is the unique solution of the system of linear inequalities given by
\[\sprod{u_i}{v} = h(P_1,u_i) \text{ for every } i \in \{1, \ldots, d-1\},\]
and $h(P_1,u_i)$ is integral for every $i \in \{1, \ldots, d-1\}$, it thus follows
that $v$ is integral.

We have now shown that all vertices of $P'_1$ are integral and hence, $P'_1$ is an integral polytope containing $P_1$. Observe now that
$P'_1 + P_{-1} \subseteq 2P_0$, since for $x \in P'_1$ and $y \in P_{-1}$ and every $j \in \{1, \ldots, d\}$, one has 
\[
 \sprod{x+y}{u_j} = \sprod{x}{u_j} + \sprod{y}{u_j} \le h(P_1,u_j) + h(P_{-1},u_j) = h(P_1 + P_{-1}, u_j) \le h(2P_0,u_j).
\]
Here, the last inequality follows from $P_1 + P_{-1} \subseteq 2P_0$.
In view of Theorem~\ref{lem:mega_lemma}\ref{eq:layers_maximality} with $R_1 = P'_1$ and $R_{-1} = P_{-1}$, we then immediately obtain $P_1 = P'_1$. This proves that $P_1$ is a non-negative homothetic copy of $P_0$. To show that $P_{-1}$ is a non-negative homothetic copy of $P_0$, one can argue in exactly the same way by interchanging the roles of $P_1$ and $P_{-1}$.

{\em Assertion (b).} 
Let $v$ be any vertex of $2P_0$. We define
$\tp := \frac{1}{g}(2P_0 - v)$ using $g$ as in Lemma~\ref{lem:hom_minimal} with $Q = 2P_0$.  By (a),
$P_1$ and $P_{-1}$ are homothetic copies of $P_0$. Thus, Lemma~\ref{lem:hom_minimal} yields
$P_1 = a \tp + b$ and $P_{-1} = a' \tp + b'$, where $a,a' \in \N \cup \{0\}$
and $b,b' \in \Z^{d-1}$.
By Theorem~\ref{lem:mega_lemma}\ref{eq:layers_inclusion_up}, we have $P_1 + P_{-1} \subseteq 2P_0$.
Summarizing these arguments, we get
\[
a \tp + a' \tp + (b + b') \subseteq 2P_0 = g \tp + v.
\]
Since $g, a, a' \ge 0$ and since $\tp$ is $(d-1)$-dimensional, we have $a + a' \le g$, i.e. $0 \le a' \le g - a$.
Then since $g,a \in \N \cup \{0\}$, the polytope $R_{-1} := (g-a) \tp + b'$ is in $\cP(\Z^{d-1})$ and contains $P_{-1}$.
In view of $P_1 + R_{-1} \subseteq 2P_0$ and Theorem~\ref{lem:mega_lemma}\ref{eq:layers_maximality}, we obtain $R_{-1} = P_{-1}$ and by this, $a' = g - a$
and $a + a' = g$.
This yields $g \tp + (b + b') = g \tp + v$ and implies $b + b' = v$.
\end{proof}

\subsection*{Enumeration for lattice width two and dimension three}\label{sec:lw_two_dim_three}

In view of the results of this section,
all possible choices for $P_0$ are contained in the list of polygons given
in Theorem~\ref{thm:half-integral}. Based on this, we 
can now construct all $\Z^3$-maximal lattice-free polytopes $P$ in $\cP(\Z^3)$ with lattice width two.
We determine all such $P$ and see that all of them are also $\R^3$-maximal.

\begin{proposition}\label{prop:lattice_width_two}
 Let $P \in \cP(\Z^3)$ be a lattice-free polytope with $\lw{P} = 2$. Then, $P$ is $\Z^3$-maximal if and only if $P$ is
unimodularly equivalent to one of the following polytopes (see Figure~\ref{fig:s1d3_width_two}):
\begin{itemize}
\item[] the simplex $\maximizer_{4,6} := \conv(-2e_1, 4e_1, e_1 + 3e_2, 2e_3)$,
\item[] the simplex $\maximizer_{4,4} := \conv(o, 4e_1, 4e_2, 2e_3)$,
\item[] the simplex $\maximizer_{4,2} := \conv(-e_1+e_2, e_1 + 3e_2, 2e_3, 2e_1 - 2e_2 + 2e_3)$,
\item[] the simplex $\maximizer'_{4,4} := \conv(-e_1,3e_1,e_1 + 4e_2, 2e_3)$,
\item[] the pyramid $\maximizer_{5,4} := \conv(e_1 - e_2, -e_1 + e_2, 3e_1 + e_2, e_1 + 3e_2, 2e_3)$,
\item[] the prism $\maximizer_{5,2} := \conv(-e_1,e_1,2e_2,2e_3,2e_1 + 2e_3,e_1 + 2e_2 + 2e_3)$,
\item[] the parallelepiped $\maximizer_{6,2} := \conv(o, -e_1+e_2, 2e_2, e_1+e_2, 2e_3, e_1+e_2+2e_3, 2e_1 + 2e_3, e_1 -e_2 + 2e_3)$.
\end{itemize}
In particular, $P$ is
 $\Z^3$-maximal if and only if $P$ is $\R^3$-maximal.
\end{proposition}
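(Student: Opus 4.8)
The plan is to convert the structural results of this section into a finite enumeration. Suppose $P\in\cP(\Z^3)$ is $\Z^3$-maximal lattice-free with $\lw{P}=2$. After a unimodular transformation and an integral translation I may assume $P\subseteq\R^2\times[-1,1]$ and $\width{P,e_3}=2$, so that Theorem~\ref{lem:mega_lemma} applies with $d=3$. By Theorem~\ref{lem:mega_lemma}\ref{item:part_one} the middle cross-section $P_0$ is a bounded $\Z^2$-maximal lattice-free polytope in $\cP(\frac12\Z^2)$; hence Theorem~\ref{thm:half-integral} forces $P_0\simeq Q_i$ for some $i\in\{2,3,4,5\}$, the unbounded $Q_1$ being excluded since $P_0$ is bounded. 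As this planar equivalence lifts to a unimodular transformation of $\R^3$ fixing the $e_3$-axis, I may assume $P_0=Q_i$ outright. The decisive preliminary observation is that in all four cases one has the \emph{equality} $P_1+P_{-1}=2P_0$: for the triangles $Q_2,Q_3,Q_4$ this is Theorem~\ref{lem:homothetic}(b), while for $Q_5$, all of whose vertices are non-integral, it follows by combining \eqref{eq:mink_sum_up} with \eqref{eq:mink_sum_lhs}, since $2\conv(\vertset{Q_5}\setminus\Z^2)=2Q_5=2P_0$. Therefore the middle layer equals $\frac12P_1+\frac12P_{-1}=P_0$, so $P=\conv(\oq{-1}\cup\oq{1})$ is completely determined by its two extreme integral layers, and the whole task reduces to classifying the integral Minkowski decompositions $P_1+P_{-1}=2P_0$ and then checking which reconstructed $P$ are genuinely $\Z^3$-maximal.

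For the simplex cases $Q_2,Q_3,Q_4$ I would invoke Theorem~\ref{lem:homothetic}(a): $P_1$ and $P_{-1}$ are non-negative homothetic copies of the triangle $P_0$, hence each is a point or a positively scaled translate of $P_0$. Writing $P_1=a\tp+b$, $P_{-1}=a'\tp+b'$ as in Lemma~\ref{lem:hom_minimal} (with $\tp$ the primitive copy of $2P_0$ and $g$ its scale), the argument of Theorem~\ref{lem:homothetic}(b) gives $a,a'\in\N\cup\{0\}$ with $a+a'=g$, leaving only finitely many scaling pairs for each of $Q_2,Q_3,Q_4$, which I reduce modulo the involution that swaps the two layers. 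The residual relative translation between the layers is constrained to finitely many positions and is largely removed by the unimodular shear $(x,z)\mapsto(x+\alpha z,z)$ with $\alpha\in\Z^2$, which shifts level $+1$ against level $-1$ by $2\alpha$; the remaining positions are handled by the symmetries of $P_0$. Reconstructing $P=\conv(\oq{-1}\cup\oq{1})$ for each admissible pair then produces an explicit, finite list of candidate simplices, prisms and frusta.

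The quadrilateral case $Q_5$ is the place where Theorem~\ref{lem:homothetic} is unavailable, and I expect it to be the main obstacle. Here $2Q_5$ is a lattice square that is a zonotope, namely a translate of $[o,(2,2)]+[o,(2,-2)]$; an integral decomposition $P_1+P_{-1}=2Q_5$ amounts to distributing the two generating segments (and their integral subdivisions) between $P_1$ and $P_{-1}$, so that each summand is a point, an integral segment in direction $(1,1)$ or $(1,-1)$, or an integral sub-parallelogram, again normalising the relative translation by a shear. Enumerating these distributions up to the swap involution and the symmetries of $Q_5$ yields the remaining candidates, among them the crossed ``segment $+$ segment'' decomposition (whose convex hull is a tetrahedron), the ``square $+$ point'' decomposition (a pyramid over a quadrilateral), and the balanced ``parallelogram $+$ parallelogram'' decomposition (a parallelepiped).

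It remains to prune the candidate list. The key criterion is that if an extreme layer $P_{1}$ or $P_{-1}$ is two-dimensional but contains no integral point in its relative interior, then the corresponding facet $\oq{1}$ or $\oq{-1}$ of $P$ is not blocked, and adjoining an integral apex one lattice level beyond that facet (say in $\R^2\times\{-2\}$) enlarges $P$ while preserving lattice-freeness, because the newly created region meets no hyperplane $\R^2\times\{k\}$, $k\in\Z$, in its interior; such a $P$ is therefore not $\Z^3$-maximal and is discarded. This is exactly what eliminates all the frustum candidates (e.g.\ the unbalanced scalings for $Q_2$ and $Q_3$) and the ``too thin'' prisms. The candidates that survive are precisely the seven polytopes $\maximizer_{4,6},\maximizer_{4,4},\maximizer_{4,2},\maximizer'_{4,4},\maximizer_{5,4},\maximizer_{5,2},\maximizer_{6,2}$ of Figure~\ref{fig:s1d3_width_two}; for each of these I would verify directly that every facet contains an integral point in its relative interior, which shows that each is $\R^3$-maximal and hence $\Z^3$-maximal. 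This simultaneously yields the classification and the asserted equivalence of $\Z^3$- and $\R^3$-maximality. The bulk of the effort, and the step most prone to bookkeeping errors, is this exhaustive but elementary verification of lattice-freeness, maximality and blocked-facet status across all reconstructed candidates, together with the case distinctions needed for $Q_5$.
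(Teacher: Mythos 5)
Your proposal follows the paper's proof essentially step for step: the reduction to $P_0=Q_i$ via Theorems~\ref{thm:half-integral} and~\ref{lem:mega_lemma}, the equality $P_1+P_{-1}=2P_0$ (via Theorem~\ref{lem:homothetic} for the triangles, via \eqref{eq:mink_sum_up} and \eqref{eq:mink_sum_lhs} for $Q_5$), the parametrization $P_{\pm 1}=a\tp+b,\ a'\tp+b'$ with $a+a'=g$ from Lemma~\ref{lem:hom_minimal}, the normalization of $b$ by a shear and of $(a,a')$ by the reflection in $\R^2\times\{0\}$, the segment decomposition $P_{\pm1}=k_1S_1+k_2S_2+v$ for $Q_5$ (the paper derives the ``distribution of generators'' from $F(P_1+P_{-1},u)=F(P_1,u)+F(P_{-1},u)$; you should make that step explicit, but it is the same idea), and the final blocked-facet check of the seven survivors.

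The one step whose justification would fail as written is your pruning criterion. You claim that if $P_1$ (say) is two-dimensional with no integral point in its relative interior, then adjoining an integral apex $p=(p',2)$ preserves lattice-freeness ``because the newly created region meets no hyperplane $\R^2\times\{k\}$ in its interior.'' This is false in general: the cross-sections of $\conv(P\cup\{p\})$ at levels $0$ and $1$ grow to $\conv\bigl(P_0\cup(\tfrac{1}{2}p'+\tfrac{1}{2}P_1)\cup(\tfrac{1}{3}p'+\tfrac{2}{3}P_{-1})\bigr)$ and $\conv\bigl(P_1\cup(\tfrac{1}{2}p'+\tfrac{1}{2}P_0)\cup(\tfrac{2}{3}p'+\tfrac{1}{3}P_{-1})\bigr)$ respectively, their relative interiors lie in the interior of the enlarged polytope, and for a badly placed $p'$ they swallow integral points (for the frustum with layers $\tp$ and $3\tp$, the apex $(2,2,2)$ already puts $(1,1,1)$ into the interior). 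So the existence of a \emph{suitable} integral apex is a nontrivial condition on $p'$ that must be checked per candidate; it is not implied by the facet being unblocked (indeed, that implication in general is precisely the theorem being proved). The paper sidesteps this by exhibiting, for each of the three discarded candidates, an explicit lattice-free integral polytope properly containing it: $\conv(o,3e_1,3e_2,3e_3)-e_3$ and $\conv(o,2e_1,2e_2)\times\R$ in the $Q_2$ case, and a simplex with apex $(0,0,2)$ in the $Q_3$ case. Your plan is repaired by replacing the blanket criterion with these (easy, finite) explicit verifications; everything else matches the paper.
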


\begin{proof}
We remark that throughout the proof, whenever we claim that two polytopes which are explicitly given are unimodularly equivalent, we omit
the verification of this claim. This verification can be carried out either by hand, using elementary linear algebra, or using specialized computer software.

Let $P$ be $\Z^3$-maximal.
By applying a unimodular transformation, we can assume $P \subseteq \R^2 \times [-1,1]$ and $\width{P,e_3} = 2$. 
For $i \in \{-1,0,1\}$, let $P_i := \setcond{x \in \R^2}{(x,i) \in P}$. 
In view of Theorem~\ref{thm:half-integral}  and Theorem~\ref{lem:mega_lemma}\ref{item:part_one}, we have that $P_0$ is unimodularly equivalent to one of the polygons
$Q_2, Q_3, Q_4, Q_5$ as in Theorem~\ref{thm:half-integral}. By applying another unimodular transformation to $P$, we assume that 
$P_0 = Q_i$ for some $i \in \{2,3,4,5\}$.
Furthermore, $P_1$ and $P_{-1}$ satisfy the conditions of Theorem~\ref{lem:mega_lemma}\ref{item:part_two}.

{\em Case 1: $P_0$ is a simplex.}
That is, $P$ is $Q_2,Q_3$ or $Q_4$. These latter three triangles have the property
that each of their integral vertices is unimodular. 
In view of Theorem~\ref{lem:homothetic}, we obtain that if $P_0 \in \{Q_2,Q_3,Q_4\}$, then $P_1,P_{-1} \in \cP(\Z^2)$
are non-negative homothetic copies of $P_0$ such that $P_1 + P_{-1} = 2P_0$. In order to apply Lemma~\ref{lem:hom_minimal},
we fix a vertex $v \in \cP(\Z^2)$ of $2P_0$ and consider $\tp := \frac{1}{g}(2P_0 - v)$, where
$g := \gcd(\{w_1,w_2,w_3,w_4\})$ with $(w_1,w_2)$ and $(w_3,w_4)$ being the elements of $\vertset{2P_0 -v} \setminus \{o\}$.
Then there exist $a, a' \in \N \cup \{0\}$ and $b,b' \in \Z^2$ such that $P_1 = a\tp + b$, $P_{-1} = a' \tp + b'$ and
$2P_0 = g\tp + v = (a + a') \tp + (b+b')$.
As shown in the proof of Theorem~\ref{lem:homothetic}(b), we have $b + b' = v$, i.e. $b' = v - b$, and $a + a' = g$. It is also obvious that reversing the roles of $P_1$ and $P_{-1}$
corresponds to reflecting $P$ with respect to $\R^2 \times \{0\}$, which is a unimodular transformation. Without loss of generality, we can
therefore assume $a \le a'$. Moreover, observe that we can also assume $b = o$ (and hence, $b' = v$). To see this, let $b = (b_1,b_2)$ and
let $\varphi \colon \R^3 \to \R^3$ be the linear mapping given by $\varphi(x_1,x_2,x_3) = (x_1 - b_1 x_3, x_2 - b_2 x_3, x_3)$.
Then $\varphi$ is indeed a unimodular transformation mapping $(a\tp + b) \times \{1\}$ onto $a\tp \times \{1\}$ and $(a'\tp + v - b) \times \{-1\}$
onto $(a'\tp + v) \times \{-1\}$ while leaving $g \tp \times \{0\}$ unchanged.

We now proceed as follows: for given $P_0$, we select a vertex $v$ and determine $g$ and $\tp$. Then
for every pair of non-negative integers $a,a'$ with $a \le a'$ and $a+a' = g$, we check the polytope
$P = \conv\big((a'\tp + v) \times \{-1\} \cup a\tp \times \{1\}\big)$ for $\Z^3$-maximality. If $P$ is $\Z^3$-maximal, we show that it is unimodularly
equivalent to one of the seven polytopes given in the formulation of the theorem.

For the three cases, we select the vertex $v$ and obtain $\tp$ as follows:
\begin{align*}
&\text{Case 1.1:} & P_0 = Q_2, & \qquad v = (0,0), & & \tp = \conv\big((0,0), (1,0), (0,1)\big); \\
&\text{Case 1.2:} & P_0 = Q_3, & \qquad v = (-2,0), & &\tp = \conv\big((0,0),(2,0),(1,1)\big);\\
&\text{Case 1.3:} & P_0 = Q_4, & \qquad v = (-1,0), & & \tp = \conv\big((0,0),(2,0),(1,2)\big).
\end{align*}

\begin{figure}[h!]
\begin{center}
\begin{tabular}{ccc}
\begin{tikzpicture}
\draw[fill = lightgray] (0,0) -- (0,1) -- (1,0) --cycle;
\draw[black] (0,0) -- (0,2) -- (2,0) -- cycle;
 		\draw[fill=black] (0,0) circle (0.05);
 		\draw[fill=black] (0,1) circle (0.05);
 		\draw[fill=black] (1,0) circle (0.05);
 		\draw[fill=black] (1,1) circle (0.05);
		\draw[fill=black] (2,0) circle (0.05);
 		\draw[fill=black] (0,2) circle (0.05);
 		\node[below] at (0,0) {\scriptsize{$(0,0)$}};
\end{tikzpicture}
&
\begin{tikzpicture}
\draw[fill = lightgray] (0,0) -- (2,0) -- (1,1) --cycle;
\draw[black] (-1,0) -- (2,0) -- (1/2,3/2) -- cycle;
 		\draw[fill=black] (0,0) circle (0.05);
 		\draw[fill=black] (0,1) circle (0.05);
 		\draw[fill=black] (1,0) circle (0.05);
 		\draw[fill=black] (1,1) circle (0.05);
		\draw[fill=black] (2,0) circle (0.05);
 		\draw[fill=black] (-1,0) circle (0.05);
 		\node[below] at (0,0) {\scriptsize{$(0,0)$}};
\end{tikzpicture}
&
\begin{tikzpicture}
\draw[fill = lightgray] (0,0) -- (2,0) -- (1,2) --cycle;
\draw[black] (-1/2,0) -- (3/2,0) -- (1/2,2) -- cycle;
 		\draw[fill=black] (0,0) circle (0.05);
 		\draw[fill=black] (0,1) circle (0.05);
 		\draw[fill=black] (1,0) circle (0.05);
 		\draw[fill=black] (1,1) circle (0.05);
		\draw[fill=black] (2,0) circle (0.05);
 		\draw[fill=black] (1,2) circle (0.05);
 		\node[below] at (0,0) {\scriptsize{$(0,0)$}};
\end{tikzpicture}
\\
Case 1.1 & Case 1.2 & Case 1.3
\end{tabular}
\caption{Illustration of $P_0$ and $\tp$ (shaded) in the Cases 1.1-1.3 of the proof of Proposition~\ref{prop:lattice_width_two}.}
\end{center}
\end{figure}
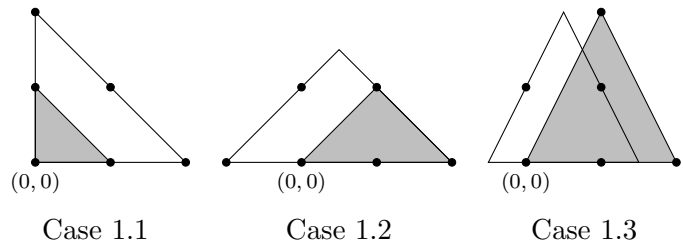

{\em Case 1.1: $P_0 = Q_2$.} Then $2P_0 = 4\tp$. 
In view of $a \le a'$, we have three cases to distinguish:
first, $a = 0$ and $a' = 4$, second, $a = 1$ and $a' = 3$, and third, $a = a' = 2$.
In the first case, $P$ is the simplex $\maximizer_{4,4} - e_3$. 
In the second case, $P$ is $\conv\big((3\tp - e_3) \cup (\tp + e_3)\big)$, which is properly contained in the integral lattice-free polytope $\conv(o, 3e_1, 3e_2, 3e_3) - e_3$ and is thus
not $\Z^3$-maximal. If $a = 2$, then $P = \conv(o, 2e_1, 2e_2) \times [-1,1]$, which is properly contained in the integral lattice-free polyhedron $\conv(o, 2e_1, 2e_2) \times \R$
and thus not $\Z^3$-maximal.

{\em Case 1.2: $P_0 = Q_3$.} Then $2P_0 = 3\tp - (2,0)$.
There are two cases to distinguish: $a = 0$, $a' = 3$ and $a = 1$, $a' = 2$. In the former case,
$P$ is the simplex $\maximizer_{4,6} - e_3$.  
In the latter case, \[P = \conv\big((-2,0,-1),(0,0,-1),(-1,1,-1),(0,0,1),(2,0,1),(1,1,1)\big),\]
which is properly contained in $\conv\big((-2,0,-1),(0,0,-1),(-1,1,-1),(0,0,2)\big)$. This implies that $P$ is not $\Z^3$-maximal.

{\em Case 1.3: $P_0 = Q_4$.} Then $2P_0 = 2\tp - (1,0)$. 
There are two cases to distinguish: $a = 0$, $a' = 2$ and $a = a' = 1$.
In the former case, $P$ is the simplex $\maximizer'_{4,4} - e_3$.
If $a = a' = 1$, then
$P = \conv\big((\tp - (1,0)) \times \{-1\} \cup \tp \times \{1\}\big) = \maximizer_{5,2} - e_3$.

{\em Case 2: $P_0 = Q_5$.}
Then we have $2P_0 = \conv(\pm (2,0), \pm (0,2)) + (1,1)$. 
Since $P_0$ does not have integral vertices, combining
\eqref{eq:mink_sum_up} and \eqref{eq:mink_sum_lhs} we obtain $P_1 + P_{-1} = 2P_0$. 
Consequently, for every $u \in \Z^2 \setminus \{o\}$, one has
$F(2P_0, u) = F(P_1 + P_{-1}, u) = F(P_1,u) + F(P_{-1},u)$; see \cite[Theorem 1.7.5(c)]{MR1216521}. 
The latter equality shows that every one-dimensional face of $P_1$ and $P_{-1}$ is parallel to an edge of $P_0$. In other words, if $P_1$ or $P_{-1}$ is not a singleton, each of its facets is parallel to one of the line segments $S_1 := [(0,0),(1,1)]$ and $S_2 := [(0,0),(1,-1)]$
and thus, there exist non-negative integers $k_1,k_2,l_1,l_2$ and vectors $v,w \in \Z^2$ such that 
\[P_1 = k_1 S_1 + k_2 S_2 + v \text{ and } P_{-1} = l_1 S_1 + l_2 S_2 + w.\]
Observe also that one has $2P_0 = 2S_1 + 2S_2 + (-1,1)$, which in view of $P_1 + P_{-1} = 2P_0$ implies
$v + w = (-1,1)$ and $k_1 + l_1 = k_2 + l_2 = 2$. As argued in Case 1, reversing the roles of $P_1$ and $P_{-1}$ results in reflecting $P$
with respect to $\R^2 \times \{0\}$ and hence, we can assume $k_1 \le l_1$, which in view of $k_1 + l_1 = 2$ implies $k_1 \le 1$. 
Likewise, in Case 1 we have fixed $b = (0,0)$ and here, we can do the same for $v$, which implies $w = (-1,1)$. Since $l_1$ and $l_2$ are determined by the choice of $k_1$ and $k_2$, respectively,
we have the following six cases depending on the choice of $(k_1,k_2) \in \{0,1\} \times \{0,1,2\}$.
\begin{compactitem}
 \item
If $(k_1,k_2) = (0,0)$, then $P = \conv(2P_0 \times \{-1\} \cup \{e_3\}) = \maximizer_{5,4} - e_3$. 
 \item
If $(k_1,k_2) = (0,1)$, then $P = \conv\big((2S_1 + S_2 + (-1,1)\big) \times \{-1\} \cup S_2 \times \{1\})$. Here, $P$ is a prism unimodularly equivalent to $\maximizer_{5,2}$.
 \item
If $(k_1,k_2) = (0,2)$, then $P = \conv\big((2S_1 +(-1,1)) \times \{-1\} \cup (2S_2) \times \{1\}\big) = \maximizer_{4,2} - e_3$.
 \item
If $(k_1,k_2) = (1,0)$, then $P$ is a prism unimodularly equivalent to the case $(k_1,k_2) = (0,1)$ and hence to $\maximizer_{5,2}$.
 \item
If $(k_1,k_2) = (1,1)$, then $P = \conv\big((S_1 + S_2 +(-1,1)\big) \times \{-1\} \cup (S_1 + S_2) \times \{1\}) = \maximizer_{6,2} - e_3$. 
 \item
If $(k_1,k_2) = (1,2)$, then $P$ is a prism unimodularly equivalent to the case $(k_1,k_2) = (0,1)$ and hence to $\maximizer_{5,2}$.
\end{compactitem}

This completes the proof that if $P$ is $\Z^3$-maximal, it is unimodularly equivalent to one of the seven polytopes specified in the proposition. 
Conversely, the seven polytopes listed are all $\R^3$-maximal (since each of their facets is blocked) and, in particular, also $\Z^3$-maximal.
\end{proof}

\section{The case of lattice width at least three}\label{sec:lw_three}

In view of Proposition~\ref{prop:lattice_width_two}, to complete the proof of Theorem~\ref{thm:main_three} it remains to show that every $\Z^3$-maximal lattice-free polytope $P \in \cP(\Z^3)$ with lattice width at least three is also $\R^3$-maximal.
Our goal is to derive an algorithm for
a computer search which finds all such polytopes and verifies the latter claim. To be able to do so, we first need reasonably good bounds on the (suitably defined) size of $P$.

\subsection*{Bounds on the size of lattice-free polytopes in $\cP(\Z^3)$ with lattice width at least three}%

The following parameters will be used to quantify the size of a lattice-free polytope $P \in \cP(\Z^3)$. We will use
the volume of both $P$ and its difference body $P-P$ and also the lattice diameter of $P$. Finally, we 
will also use the so-called successive minima: for a convex body $C \subseteq \R^d$ centrally symmetric with respect to the origin, $i \in \{1, \ldots, d\}$,
the minimal value $\lambda > 0$ such that $\lambda C$ contains $i$ linearly independent points of $\Z^d \setminus \{o\}$ is denoted by 
$\lambda_i(C)$ and called the $i$-th {\em successive minimum}\index{minima!successive} of $C$. We remark that clearly, $0 < \lambda_1(C) \le \ldots \le \lambda_d(C)$
and also $\lambda_1(t C) = \frac{1}{t}\lambda_1(C)$ for every positive number $t$. We will quantify the size of $P$ in terms of $\frac{1}{\lambda_1(P-P)}$.

In order to bound the volume from above, we make use of the first and second theorem of Minkowski\index{theorem!of Minkowski}; see \cite[Theorems 4 and 12]{MR1242995},
which link the volume of an $o$-symmetric body to its successive minima. For a convex
body $C \subseteq \R^d$ symmetric with respect to the origin, one has
\begin{align}
	\label{eq:minkowski}
	\vol(C) \le \frac{2^d}{\prod_{i=1}^d \lambda_i(C)} \le \frac{2^d}{\lambda_1(C)^d}.
\end{align}
Consequently, lower bounds on the first successive minimum imply upper bounds on the volume.
Let $K$ be a convex body in $\R^d$, not necessarily a centrally symmetric one. The volume of $K$ and the volume of its difference body $K-K$ are related by the well-known inequality $2^d \vol(K) \le \vol(K-K)$; see for example \cite[Theorem 7.3.1]{MR1216521}. 
Since $K-K$ is $o$-symmetric, one can apply \eqref{eq:minkowski} to obtain
\begin{align}
	\label{eq:gen_minkowski}
	\vol(K) \le \frac{1}{\prod_{i=1}^d \lambda_i(K-K)} \le \frac{1}{\lambda_1(K-K)^d}.
\end{align}
In the following, we will derive a lower bound on $\lambda_1(P-P)$ for a lattice-free polytope $P$ with $\lw{P} \ge 3$.

To establish a connection between the lattice width
of a convex body and the successive minima of its difference body, we use another sequence of parameters connected to a convex body $K$. 
For $i \in \{1,\ldots,d\}$, we denote by $\mu_i(K)$ the \emph{$i$-th covering minimum} of $K$, defined to be the infimum over all $\mu>0$ for which $\mu K + \Z^d$ meets every $(d-i)$-dimensional affine subspace of $\R^d$ (see \cite{MR970611}). 
Clearly, $0 < \mu_1(K) \le \cdots \le \mu_d(K)$. 
Kannan and Lov\'{a}sz \cite[Lemmas~2.3 and 2.5]{MR970611} provided a link between covering minima and successive minima by showing that 
\begin{align} \label{eq:kl}
	\mu_{i+1}(K) \le \mu_i(K) + \lambda_{d-i}(K-K)
\end{align}
holds for every $i \in \{1,\ldots,d-1\}$. Furthermore, they observed $\lw{K} = 1/\mu_1(K)$.
Note that the value $\mu_d(K)$ is the standard {\em covering radius} (also called {\em inhomogeneous minimum}) of $K$ and that $\mu_d(K) \ge 1$ if and only if some translation of $K$ is lattice-free.

Hurkens \cite{MR1060014} proved the following relation between $\mu_2$ and $\mu_1$ for $K \subseteq \R^2$ in the case $d = 2$:
\begin{align}
	\label{eq:hurkens}
	\mu_2(K) \le \left(1 + \frac{2}{\sqrt{3}}\right) \mu_1(K). 
\end{align}
Kannan and Lov\'{a}sz \cite{MR970611} observed that this also implies that \eqref{eq:hurkens} holds for every $d \ge 2$. 
Let us now return to the case $d=3$. 
For a lattice-free convex body $K \subseteq \R^3$, we can therefore combine \eqref{eq:hurkens} with \eqref{eq:kl} for $i=2$ and the fact that $\mu_3(K) \ge 1$ to obtain the following chain of inequalities:
\[
 1 \le \mu_3(K) \le \mu_2(K) + \lambda_1(K-K) \le \left(1 + \frac{2}{\sqrt{3}}\right) \mu_1(K) + \lambda_1(K-K).
\]
Consequently, one has
\begin{align}\label{eq:lambda_vs_lw}
 \lambda_1(K-K) \ge 1 - \left(1 + \frac{2}{\sqrt{3}}\right) \mu_1(K).
\end{align}
Recall that $\mu_1(K)$ is the reciprocal of $\lw{K}$. Hence, the right hand side of \eqref{eq:lambda_vs_lw} is positive whenever $\lw{K} > 1 + \frac{2}{\sqrt{3}}$.
Applying these observations to the case of a lattice-free polytope $P \in \cP(\Z^3)$ with lattice width at least three we obtain a positive lower bound on $\lambda_1(P-P)$ from \eqref{eq:lambda_vs_lw}:
 \begin{align}\label{eq:l1_lower}
  \lambda_1(P-P) \ge \frac{2}{3}\left(1 - \frac{1}{\sqrt{3}}\right) > \frac{1}{4}.
 \end{align}

We are now ready to list the inequalities needed for the proof of Theorem~\ref{thm:main_three}.

\begin{proposition}[Size bounds for lattice-free polytopes in $\cP(\Z^3)$ with lattice width at least three]\label{prop:volume_and_ld}
 Let $P \in \cP(\Z^3)$ be a lattice-free polytope with $\lw{P} \ge 3$. Then, the following statements hold: 
\begin{itemize}
 \item[(a)]\label{item:volume_bound} $\vol(P) \le 27$,
 \item[(b)]\label{item:volume_diff_body} $\vol(P-P) \le 8 \cdot 27$,
 \item[(c)]\label{item:lambda1_bound} $\lambda_1(P-P) > \frac{1}{4}$,
 \item[(d)]\label{item:ld_bound} $\ld{P} \le 3$.
\end{itemize}
\end{proposition}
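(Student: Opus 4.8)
The plan is to deduce all four bounds from the two theorems of Minkowski, i.e.\ from \eqref{eq:minkowski} and \eqref{eq:gen_minkowski}, together with the lower bound \eqref{eq:l1_lower} and the Kannan--Lov\'asz machinery recorded above. Statement (c) is nothing but the last inequality of \eqref{eq:l1_lower}, so I would dispose of it first. The heart of the matter is the single estimate
\[
 \lambda_1(P-P)\,\lambda_2(P-P)\,\lambda_3(P-P) \ge \tfrac{1}{27},
\]
since, once this is available, (a) is immediate from \eqref{eq:gen_minkowski} applied to $P$, and (b) is immediate from \eqref{eq:minkowski} applied to the $o$-symmetric body $P-P$, using $\vol(P-P) \le 8 / \prod_i \lambda_i(P-P) \le 8 \cdot 27$.

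To establish this product bound I would chain the Kannan--Lov\'asz inequality \eqref{eq:kl} for $i=1$ and $i=2$. Writing $\lambda_i := \lambda_i(P-P)$ and using $\mu_3(P) \ge 1$ (as $P$ is lattice-free) together with $\mu_1(P) = 1/\lw{P} \le 1/3$, one obtains
\[
 1 \le \mu_3(P) \le \mu_2(P) + \lambda_1 \le \mu_1(P) + \lambda_1 + \lambda_2 \le \tfrac{1}{3} + \lambda_1 + \lambda_2,
\]
and hence $\lambda_1 + \lambda_2 \ge \tfrac23$. Combining this with $\lambda_1 > \tfrac14$ from (c) and the monotonicity $\lambda_3 \ge \lambda_2$, the product bound reduces to the elementary claim $\lambda_1 \lambda_2^2 \ge \tfrac{1}{27}$ on the region $\{\tfrac14 < \lambda_1 \le \lambda_2,\ \lambda_1 + \lambda_2 \ge \tfrac23\}$. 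Since $\lambda_1\lambda_2^2$ is increasing in $\lambda_2$, the minimum is attained at $\lambda_2 = \max(\lambda_1, \tfrac23 - \lambda_1)$; because the function $\lambda_1(\tfrac23 - \lambda_1)^2$ is decreasing on $[\tfrac14,\tfrac13]$ (here one uses $\tfrac14 > \tfrac29$) and equals $\tfrac{1}{27}$ at $\lambda_1 = \tfrac13$, while $\lambda_1^3 > \tfrac{1}{27}$ for $\lambda_1 > \tfrac13$, the minimum is exactly $\tfrac{1}{27}$, attained at $\lambda_1 = \lambda_2 = \tfrac13$. This optimization, that is, squeezing out $\tfrac{1}{27}$ rather than the weaker $\tfrac{1}{64}$ that $\lambda_1^3$ alone would give, is the step I expect to require the most care: the crucial point is to feed in the sum bound $\lambda_1 + \lambda_2 \ge \tfrac23$ and not merely the bound on $\lambda_1$.

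Finally, for (d) I would argue directly from (c). If $\ld{P} = k$, then by convexity $P$ contains $k+1$ collinear integral points $a, a+v, \dots, a+kv$ with $v \in \Z^3$ primitive; consequently $kv \in P-P$ and, by central symmetry, $[-kv, kv] \subseteq P-P$, so that $v \in \tfrac1k(P-P)$. As $v$ is a nonzero integral vector, this forces $\lambda_1(P-P) \le \tfrac1k$, whence $k < 4$ by (c), and therefore $\ld{P} \le 3$ since the lattice diameter is an integer.
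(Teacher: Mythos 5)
Your proposal is correct and follows essentially the same route as the paper: (c) is read off from \eqref{eq:l1_lower}, the product bound $\lambda_1\lambda_2\lambda_3 \ge 1/27$ is obtained by chaining \eqref{eq:kl} for $i=1,2$ with $\mu_3 \ge 1$ and $\mu_1 \le 1/3$ and then minimizing $\lambda_1(2/3-\lambda_1)^2$ on $(1/4,1/3]$ (with the separate case $\lambda_1 > 1/3$), and (d) follows from the observation that a lattice diameter of $k$ produces a nonzero integral vector in $\frac{1}{k}(P-P)$, forcing $\lambda_1(P-P) \le 1/k$. The only cosmetic difference is that you phrase the middle step as an optimization of $\lambda_1\lambda_2^2$ over the region $\{\lambda_1 \le \lambda_2,\ \lambda_1+\lambda_2 \ge 2/3\}$, whereas the paper substitutes $\lambda_2 \ge 2/3 - \lambda_1$ directly; the substance is identical.
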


\begin{proof}[Proof of Proposition~\ref{prop:volume_and_ld}.]
 Let $\lambda_i := \lambda_i(P-P)$ and $\mu_i := \mu_i(P)$ for $i \in \{1,2,3\}$.
 Assertion (c) follows immediately from \eqref{eq:l1_lower}.

 We now prove (a) and (b).
 By \eqref{eq:minkowski} and \eqref{eq:gen_minkowski}, it suffices to prove that the product $\lambda_1 \lambda_2 \lambda_3$ is bounded from below by $1/27$. 
 In the case $\lambda_1 > 1/3$, this is clear, since in this case $\lambda_1 \lambda_2 \lambda_3 \ge \lambda_1^3 > 1/27$.
 We switch to the case $\lambda_1 \le 1/3$. Applying \eqref{eq:kl} for $i=1$, we
 get $\lambda_2 \ge \mu_2 - \mu_1$ and applying \eqref{eq:kl} for $i=2$ yields $\mu_2 \ge \mu_3 - \lambda_1$. Combining these two inequalities, we get
 $\lambda_2 \ge \mu_3 - \lambda_1 - \mu_1$.
 Since $P$ is lattice-free, we have $\mu_3 \ge 1$. We further have $\mu_1 \le 1/3$ because $1 / \mu_1 = \lw{P} \ge 3$.
 Hence, $\lambda_2 \ge 2/3 - \lambda_1$ and thus, $\lambda_1 \lambda_2 \lambda_3 \ge \lambda_1 (2/3 - \lambda_1)^2$.
Taking into account (c), we have $1/4 < \lambda_1 \le 1/3$. For a moment, we view $\lambda_1$ as a variable and determine a lower bound of the function
 $f(\lambda_1) := \lambda_1 (2/3 - \lambda_1)^2$ 
on the interval $\left(\frac{1}{4}, \frac{1}{3}\right]$.
 The latter function is decreasing on this interval and hence attains its minimum on this interval for $\lambda_1 = 1/3$. This
 yields $\lambda_1 \lambda_2 \lambda_3 \ge f(\lambda_1) \ge f(1/3) = 1/27$ and completes the proof of (a) and (b). 

This leaves us to show (d).
We observe the following connection between lattice diameter and first successive minimum. 
By the definition of $\ld{P}$, there exist distinct points $z, z' \in P \cap \Z^3$ with $u := \frac{1}{\ld{P}}(z - z') \in \Z^3$. Thus,
$u$ is a non-zero integral vector in $\frac{1}{\ld{P}}(P-P)$. Taking into account the definition of 
the first successive minimum, we get
\begin{align*}
 \lambda_1 \le \frac{1}{\ld{P}}. 
\end{align*}
Hence, we have $\ld{P} \le 1 / \lambda_1 < 4$ in view of (c) and since $\ld{P} \in \N$, this shows (d).
\end{proof}

\subsection*{Completing the case of lattice width at least three by a computer search}\label{sec:computer_search}%
Based on the bounds on the size of lattice-free polytopes in $\cP(\Z^3)$ with lattice width at least three, we
can now derive a search algorithm that finds, up to affine unimodular transformations, all
polytopes in the set
\begin{equation}
    \label{computerSearchGoal}
    \setcond{P \in \cP(\Z^3)}{P \text{ is } \Z^3\text{-maximal lattice-free and } \lw{P} \ge 3}.
\end{equation}
We present the algorithm along with mathematical arguments that verify its correctness.
We give a rather high-level description omitting algorithmic details that can be resolved in a straightforward manner.
We implemented our search algorithm in Python~2.7 -- a high-level language with a self-explanatory syntax.
The code of this implementation can be found in the appendix.
Our implementation contains a straightforward test that checks for each polytope~$ P $ in~\eqref{computerSearchGoal}
whether it is also $ \R^3 $-maximal:
we check whether the relative interior of each facet of~$ P $ contains an integral point.
As a result, an execution of our code confirms (within less than half an hour) that every polytope
in~\eqref{computerSearchGoal} is indeed $ \R^3 $-maximal.

Since all $ \R^3 $-maximal lattice-free polytopes in~$ \cP(\Z^3) $ were enumerated in~\cite{MR2855866}, it was not our intention
to output an irredundant list of all polytopes in~\eqref{computerSearchGoal}.
In fact, our algorithm enumerates many polytopes that are pairwise unimodularly equivalent. We decided to refrain from excluding such redundancy
to keep the code as simple as possible and thus more easily verifiable.

\subsubsection*{Step 1: Fixing a boundary fragment}
Let $P \in \cP(\Z^3)$ be $\Z^3$-maximal lattice-free and let $\fixedLatticeDiameter := \ld{P}$.
In view of Proposition~\ref{prop:volume_and_ld}, the possible values of~$ \fixedLatticeDiameter $
are~$ 1 $, $ 2 $, and~$ 3 $.
Because~$ P $ is lattice-free, there exists a facet~$ F $ of~$ P $ with~$ \ld{F} = \ld{P} = \fixedLatticeDiameter $ and by applying
an appropriate unimodular transformation, we can assume $F \subseteq \R^2 \times \{0\}$.
Let us first treat the case that~$ \fixedLatticeDiameter = 1 $.
Howe's Theorem (see, e.g., \cite[Theorem 1.5]{MR798388}) asserts that if $P \in \cP(\Z^3)$ satisfies $P \cap \Z^3 = \vertset{P}$, then 
the lattice width of $P$ is one. 
Since, up to unimodular equivalence, $\R^2 \times [0,1]$ is the only $\Z^2$-maximal lattice-free polyhedron with lattice width one, such $P$ is not
$\Z^3$-maximal.
Hence, $P$ has to contain an integral point which is not a vertex of $P$, and since $P$ is lattice-free, this point has to be in the boundary of $P$.
Clearly, there cannot be an integral point in the relative interior of an edge of
$P$, since otherwise this edge has at least three collinear integral points, contradicting $\fixedLatticeDiameter = 1$. 
This implies that we can choose $F$ such that $\relintr{F} \cap \Z^3 \neq \emptyset$. Suppose $|F \cap \Z^3| > 4$. Then two points in $|F \cap \Z^3|$, say $x$ and $y$, are congruent modulo two and hence, the line segment $[x,y]$ contains $\frac{1}{2}(x+y) \in \Z^3$, a contradiction
to $\fixedLatticeDiameter = 1$. Thus, $F$ is a triangle with precisely one integral point in its relative interior. It follows that $F$
is unimodularly equivalent to $\conv\big((-1,-1,0), \, (1,0,0), \, (0,1,0) \big)$; see \cite{MR1039134}, where all integral polygons with precisely one interior
integral point were listed.

It is easy to check that in the the case~$ \fixedLatticeDiameter = 2 $, up to unimodular transformation, ~$ F $ contains the triangle $\conv\big((0,0,0), \, (2,0,0), \, (0,1,0)\big)$
and likewise, in the case~$ \fixedLatticeDiameter = 3 $, up to unimodular
transformation, $F$ contains the triangle $\conv\big((0,0,0), \, (3,0,0), \, (0,1,0)\big)$. 
Thus, we may assume that~$ P $ contains one of the following three triangles~$ \fixedBase $ in its boundary:
\begin{equation}
    \label{eqFixedBase}
    \fixedBase :=
    \begin{cases}
        \conv\big((-1,-1,0), \, (1,0,0), \, (0,1,0)\big) & \text{if } \fixedLatticeDiameter = 1, \\
        \conv\big((0,0,0), \, (2,0,0), \, (0,1,0)\big) & \text{if } \fixedLatticeDiameter = 2, \\
        \conv\big((0,0,0), \, (3,0,0), \, (0,1,0)\big) & \text{if } \fixedLatticeDiameter = 3.
    \end{cases}
\end{equation}

\subsubsection*{Step 2: Inscribing a pyramid}
Clearly, after fixing~$\ell$ and the corresponding $ \fixedBase $, without loss of generality we can assume that we have~$ x_3 \ge 0 $ for all points~$
(x_1,x_2,x_3) \in P $.
Let~$h := \max \setcond{x_3}{(x_1,x_2,x_3) \in P}$ and fix some vertex of $P$ of the form $ \fixedApex = (a_1,a_2,h) $.
By the definition of the lattice width, we have $h \ge \lw{P}$ and since we assume $\lw{P} \ge 3$, we have $h \ge 3$.
By applying an appropriate unimodular transformation that keeps the third component unchanged, we may assume that~$ 0 \le a_1,a_2 \le
h - 1 $ holds. To see that, observe that for every pair of integers $k,k'$ the linear mapping $\varphi$\label{page:ut_a_h} given
by $\varphi(e_1) = e_1$, $\varphi(e_2) = e_2$ and $\varphi(e_3) = k e_1 + k' e_2 + e_3$ is unimodular and maps $(a_1,a_2,h)$ to $(a_1 + k h, a_2 + k' h, h)$.
In order to obtain an upper bound on~$ h $, we define the tetrahedron~$ \fixedPyramid := \conv(\fixedBase \cup \{\fixedApex\}) $
and observe that we have
\[
    \tbinom{6}{3} \cdotp \vol(\fixedPyramid)
    = \vol(\fixedPyramid - \fixedPyramid)
    \le \vol(P-P)
    \le 8 \cdotp 27,
\]
where the first equality follows from \cite[Theorem 2]{MR0092172} and the last inequality follows from
Proposition~\ref{prop:volume_and_ld}.
Denoting by~$ \operatorname{area}(\fixedBase) $ the area of~$ \fixedBase $ and using $ \vol(\fixedPyramid) = \tfrac{1}{3} \cdotp h \cdotp
\operatorname{area}(\fixedBase) $, we thus obtain
\[
    h \le \left\lfloor \frac{3 \cdotp 8 \cdotp 27}{\tbinom{6}{3} \cdotp \operatorname{area}(\fixedBase)} \right\rfloor.
\]
In the cases~$ \fixedLatticeDiameter = 2 $ and~$ \fixedLatticeDiameter = 3 $, this bound evaluates to~$ h \le 32 $ and~$
h \le 21 $, respectively.
However, in the case~$ \fixedLatticeDiameter = 1 $, the following Lemma provides a tighter upper bound on~$ h $.
\begin{lemma}
    Let~$ P \in \cP(\Z^3) $ be a lattice-free polytope such that~$ P $ contains~$ (a_1,a_2,h) \in \Z^3 $ and also the points
    $ (-1, -1, 0) $, $ (1, 0, 0) $ and $ (0, 1, 0) $.
    Then,~$ h \le 12 $.
\end{lemma}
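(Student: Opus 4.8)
The plan is to reduce everything to the tetrahedron
$T := \conv\big(\{(-1,-1,0),(1,0,0),(0,1,0),a\}\big)$ with apex $a := (a_1,a_2,h)$. Since $T \subseteq P$ and $P$ is lattice-free, $T$ is lattice-free, so it suffices to bound the height of an arbitrary lattice-free tetrahedron over the base triangle $B := \conv((-1,-1),(1,0),(0,1))$, which carries the single interior lattice point $o$. First I would normalise the apex: the shear $(x_1,x_2,x_3)\mapsto(x_1+kx_3,\,x_2+k'x_3,\,x_3)$ is unimodular, fixes the plane $\R^2\times\{0\}$ pointwise (hence fixes $B$) and sends $a$ to $(a_1+kh,\,a_2+k'h,\,h)$; choosing $k,k'$ suitably I may assume $0\le a_1,a_2\le h-1$. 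The reflection $(x_1,x_2)\mapsto(x_2,x_1)$ maps $B$ to itself, so I may also assume $a_1\ge a_2$.

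Next I would slice $T$ by the horizontal planes $H_t := \R^2\times\{t\}$ at integer heights $t\in\{1,\dots,h-1\}$. Applying Lemma~\ref{prop:aux_basic_convexity}(b) to the sets $B\times\{0\}$ and $\{a\}$ with $\lambda=t/h$ identifies the slice $H_t\cap T$ with the triangle $\Delta_t := \tfrac{h-t}{h}B + \tfrac{t}{h}(a_1,a_2)$ and shows that the relative interior of this slice is exactly $\intr{T}\cap H_t$. Hence $T$ is lattice-free if and only if no $\Delta_t$ contains a point of $\Z^2$ in its interior. Using the facet description $\intr{B} = \{(x,y): x-2y<1,\ 2x-y>-1,\ x+y<1\}$, the statement that a fixed integer column $(p,q)$ satisfies $(p,q,t)\in\intr{T}$ unwinds, after clearing the positive denominator $h-t$, into three strict linear inequalities in $t$: two of them confine $t$ to an open interval whose endpoints are explicit ratios in $a_1,a_2,h$, while the third is a $t$-independent side condition. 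Lattice-freeness therefore forces each such interval to contain no integer in $\{1,\dots,h-1\}$.

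I would then read off constraints from a short explicit list of columns. The column $(0,0)$ (only $t=1$ is binding) yields $2a_1-a_2\ge h-1$ — the symmetric alternative $2a_2-a_1\ge h-1$ being excluded by $a_1\ge a_2$ apart from the degenerate case $a_1=a_2=h-1$. The column $(1,0)$ yields $a_1\le 2a_2+1$, and the two inequalities together force $a_1\ge(2h-3)/3$, i.e.\ the apex must sit far out along the $x_1$-axis. Feeding this range of $(a_1,a_2)$ into further columns such as $(2,1)$ and its mirror $(1,2)$ shows that the corresponding forbidden $t$-intervals slide into the range $t\in\{2,3,4,\dots\}$ and, once $h$ is large, unavoidably capture an integer $t\le h-1$, producing an interior lattice point of $T$ and hence a contradiction; the surviving configurations satisfy $h\le 12$.

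The hard part will be this last step: choosing a finite set of columns that provably covers every admissible apex position, and organising the interval-avoidance conditions so that their simultaneous satisfiability fails exactly for $h\ge 13$. This is elementary but requires careful bookkeeping, together with separate treatment of a handful of degenerate sub-cases (such as $a_1=a_2$ or $2a_2-a_1\ge h-1$). Since Steps~1–2 confine all relevant parameters $(a_1,a_2,h)$ to an explicit bounded range, Step~3 can, if desired, be closed by a bounded finite check rather than a fully hand-written covering argument.
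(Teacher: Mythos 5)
Your overall reduction is sound as far as it goes: passing from $P$ to the tetrahedron $T=\conv(B\cup\{a\})$, normalising to $0\le a_1,a_2\le h-1$ and $a_1\ge a_2$, and slicing at integer heights via Lemma~\ref{prop:aux_basic_convexity}(b) are all legitimate, and your computations for the columns $(0,0)$ and $(1,0)$ check out (modulo the degenerate branch $a_1+a_2\le 1$ from the third inequality of column $(1,0)$, which you should dispose of explicitly; it forces $h\le 3$). But the proof has a genuine gap at exactly the point where the bound $h\le 12$ is supposed to emerge. The assertion that the columns $(2,1)$ and $(1,2)$ ``unavoidably capture an integer $t\le h-1$ once $h$ is large'' is never established, and the constraints you do derive, namely $2a_1-a_2\ge h-1$, $a_1\le 2a_2+1$ and $a_1\ge (2h-3)/3$, are compatible with $a_1,a_2\le h-1$ for \emph{every} $h$, so they yield no bound on $h$ whatsoever. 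Worse, the proposed escape hatch --- closing Step~3 ``by a bounded finite check'' --- is circular: Steps~1--2 confine $(a_1,a_2)$ only to $\{0,\dots,h-1\}^2$, and $h$ itself is unbounded until the lemma is proved, so there is no finite range to check. To make your route work you would have to exhibit a finite list of columns and prove a covering statement: for every $(a_1,a_2,h)$ with $h\ge 13$ satisfying the earlier constraints, some listed column produces an interior lattice point at some integer height. That is the entire content of the lemma and it is missing.

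For comparison, the paper avoids all of this bookkeeping with a three-line volume argument: it symmetrises the base, setting $D':=D\cap(-D)$ where $D$ is the base triangle, notes that $D'$ has area $1$ and contains no nonzero lattice point, and forms the $o$-symmetric double pyramid $R=\conv(D'\cup\{\pm a\})$ of volume $\tfrac{2}{3}h$. If $h>12$ this volume exceeds $2^3$, so Minkowski's first theorem gives a nonzero lattice point in $\intr{R}$; by symmetry one such point has positive third coordinate, hence lies in $\intr{P}$, contradicting lattice-freeness. Note that the constant $12$ falls out of $\tfrac{2}{3}h>8$ with no case analysis. If you want to salvage your slicing approach, you must either complete the covering argument by hand or first obtain some a priori bound on $h$ (for instance via exactly such a volume estimate), after which a finite verification becomes legitimate --- but at that point the volume argument has already done the work.
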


\begin{proof}
We argue by contradiction by assuming $h > 12$ and showing that this contradicts the lattice-freeness of $P$. Let $D$ denote the triangle $\conv\big((-1,-1,0),(1,0,0),(0,1,0)\big)$ and consider $D' := D \cap (-D)$. Observe that $o \in D$ and hence,
$o \in D'$. Furthermore, $D'$ is symmetric with respect to $o$ by construction. We also introduce the double pyramid $ R := \conv(D' \cup \{\pm (a_1,a_2,h)\}) $. One can verify directly that the area of $D'$ is one and thus, the volume of $R$ is $\frac{2}{3}h > 8$. 
Since $R$ is an $o$-symmetric convex body, we can apply \eqref{eq:minkowski} to obtain
$\frac{2^3}{\lambda_1(R)^3} \ge \vol(R) > 2^3$ or, equivalently, $\lambda_1(R) < 1$. By definition of the successive minima, this implies that
$\intr{R}$ contains a point $z$ of $\Z^3 \setminus \{o\}$. Because of the symmetry of $R$, this also implies $-z \in \intr{R}$. As $D' \cap \Z^3 \setminus \{o\} = \emptyset$,
either $z$ or $-z$ has to be contained in $\intr{R} \cap \big( \R^2 \times (0,\infty) \big)$. The latter set, however, is contained in $\intr{P}$,
a contradiction to the lattice-freeness of $P$.
\end{proof}

We now summarize the previous observations as follows: we may assume that~$ P $ is contained in $\R^2 \times [0,\bar{h}]$,
where
\[
    \bar{h} := \begin{cases}
        12 & \text{if } \fixedLatticeDiameter = 1, \\
        21 & \text{if } \fixedLatticeDiameter = 2, \\
        32 & \text{if } \fixedLatticeDiameter = 3,
    \end{cases}
\]
and that $P$
 contains the tetrahedron~$ \fixedPyramid = \conv(\fixedBase \cup \{a\})$, where $B$ is defined as in \eqref{eqFixedBase} and
\begin{align}\label{eq:set_for_a}
   a \in S_{\bar{h}} := \setcond{(x_1, x_2, x_3) \in \Z^3}{0 \le x_1,x_2 \le x_3 - 1, \, 0 \le x_3 \le \bar{h}}.
\end{align}

We now enumerate all elements of the set in \eqref{eq:set_for_a}.
Without further qualifications, this enumeration yields several ten thousand points.
However, recall that~$ P $ is lattice-free and satisfies
$ \lambda_1(P-P) > \frac{1}{4} $ by Proposition~\ref{prop:volume_and_ld}.
Since $\fixedPyramid \subseteq P$, these two properties also have to be satisfied by~$ \fixedPyramid $.
Furthermore, because $P$ has lattice diameter $\fixedLatticeDiameter$ and $\fixedPyramid \subseteq P$, we have $\ld{\fixedPyramid} \le \fixedLatticeDiameter$, while
on the other hand $\fixedBase \subseteq \fixedPyramid$ yields $\ld{\fixedPyramid} \ge \ld{\fixedBase} = \fixedLatticeDiameter$.
Hence, we obtain that $T$ is lattice-free and satisfies $\ld{T} = \fixedLatticeDiameter$ as well $\lambda_1(T-T) > \frac{1}{4}$.
The computer search checks for each of the possible choices for~$ \fixedApex = (a_1,a_2,h) $
whether these three properties are fulfilled for the tetrahedron $\conv(\fixedBase \cup \{(a_1,a_2,h)\})$. 
As a consequence, we end up with only $ 69 $ valid choices for~$ \fixedApex $ in total.

\subsubsection*{Step 3: Collecting all candidate vertices}
In the next step, for a given combination of fixed lattice diameter~$ \fixedLatticeDiameter $ and pyramid~$ \fixedPyramid $ (given by $\fixedBase$
and $a = (a_1,a_2,h)$ as introduced above), we enumerate all lattice-free polytopes $P$ in $\cP(\Z^3)$ which have 
lattice diameter $\fixedLatticeDiameter$ and contain $\fixedPyramid$. 
In order to do so, we
enumerate a finite subset of $\Z^3$, depending only on $\fixedPyramid$, which we denote by
$\candidates{\fixedPyramid}$ (we call the elements of $\candidates{\fixedPyramid}$ {\em candidate vertices}) 
such that for every $P$ as above one has $ \vertset{P} \subseteq \candidates{\fixedPyramid}$.
This is done as follows.
First, recall that every vertex~$ v = (v_1,v_2,v_3) $ of~$ P $ satisfies~$ 0 \le v_3 \le h $ by our choice of $h$.
By Proposition~\ref{prop:volume_and_ld}, we further have
\[
    \vol\big(\conv(\fixedPyramid \cup \{v\})\big) \le \vol(P) \le 27.
\]
The following lemma shows that all points $x \in \R^3$ satisfying $ \vol\big(\conv(\fixedPyramid \cup \{x\})\big) \le 27 $ lie in an appropriate
homothetic copy of $\fixedPyramid$.
\begin{lemma}
    Let~$ T \subseteq \R^3 $ be a tetrahedron, $c$ the barycenter of its vertices and let $ t \ge \vol(T) $. Then
    \[
        \Big\{x \in \R^3 \; : \; \vol\big(\conv(T \cup \{x\})\big) \le t\Big\} \subseteq \lambda \cdotp T + (1-\lambda) \cdotp c,
    \]
    where~$ \lambda := 4 \cdotp \left( \tfrac{t}{\vol(T)} - 1 \right) + 1 $.
\end{lemma}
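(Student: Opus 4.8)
The plan is to prove the contrapositive: if $x \notin \lambda T + (1-\lambda)c$, then $\vol\big(\conv(T \cup \{x\})\big) > t$. I would fix the vertices $v_1,\dots,v_4$ of $T$, so that $c = \tfrac14(v_1+\cdots+v_4)$, and work throughout with the barycentric coordinates $\beta_1,\dots,\beta_4$ with respect to $T$. The first step is to describe the scaled tetrahedron $\lambda T + (1-\lambda)c$ in these coordinates. Writing a point $y = \lambda x' + (1-\lambda)c$ with $x' \in T$ and using that $c$ has barycentric coordinates $(\tfrac14,\dots,\tfrac14)$, a direct computation shows that the barycentric coordinates of $y$ are $\gamma_i = \lambda\,\beta_i(x') + \tfrac{1-\lambda}{4}$; since $x' \in T$ is equivalent to $\beta_i(x') \ge 0$ for all $i$, this yields
\[
\lambda T + (1-\lambda)c = \setcond{y \in \R^3}{\beta_i(y) \ge \tfrac{1-\lambda}{4} \textand i \in \{1,2,3,4\}}.
\]
Here $\lambda \ge 1$ because $t \ge \vol(T)$, so $\tfrac{1-\lambda}{4} \le 0$ and this is indeed an enlarged copy of $T$.

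Next, suppose $x \notin \lambda T + (1-\lambda)c$. Then some barycentric coordinate, say $\beta_1(x)$ after reindexing, satisfies $\beta_1(x) < \tfrac{1-\lambda}{4} \le 0$, so $x$ lies strictly on the opposite side of the facet hyperplane $H_1 := \aff(F_1)$ from the vertex $v_1$, where $F_1$ denotes the facet of $T$ opposite $v_1$. The key geometric step is the volume estimate
\[
\vol\big(\conv(T \cup \{x\})\big) \ge \vol(T) + \vol\big(\conv(F_1 \cup \{x\})\big).
\]
This holds because both $T$ and the pyramid $\conv(F_1 \cup \{x\})$ are contained in $\conv(T \cup \{x\})$ and lie on opposite closed sides of $H_1$: every point of the pyramid is a convex combination of points with $\beta_1 = 0$ and of $x$ with $\beta_1(x) < 0$, hence has $\beta_1 \le 0$, while $T$ has $\beta_1 \ge 0$. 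Thus the two solids have disjoint interiors and their volumes add.

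Finally I would evaluate the pyramid volume. Both $T$ and $\conv(F_1 \cup \{x\})$ are pyramids over the common base $F_1$, with apices $v_1$ and $x$, respectively. Since $\beta_1$ is the affine function vanishing on $H_1$ with $\beta_1(v_1) = 1$, the distance of a point to $H_1$ is proportional to the absolute value of its first barycentric coordinate; hence $\vol\big(\conv(F_1 \cup \{x\})\big) = |\beta_1(x)| \cdot \vol(T) = -\beta_1(x)\,\vol(T)$. Combining this with the previous inequality gives $\vol\big(\conv(T \cup \{x\})\big) \ge (1 - \beta_1(x))\,\vol(T)$, and substituting $\beta_1(x) < \tfrac{1-\lambda}{4} = \tfrac{t}{\vol(T)} - 1$ (which is exactly the definition of $\lambda$) yields $\vol\big(\conv(T \cup \{x\})\big) > \tfrac{t}{\vol(T)} \cdot \vol(T) = t$, as required. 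The only points demanding care are the additivity of volumes across $H_1$ and the proportionality between the barycentric coordinate $\beta_1$ and the height over $F_1$; both are elementary but should be stated precisely.
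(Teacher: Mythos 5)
Your proposal is correct and follows essentially the same route as the paper: prove the contrapositive, identify a negative barycentric coordinate $\beta_1(x) < \tfrac{1-\lambda}{4}$, split $\conv(T \cup \{x\})$ into $T$ and the pyramid over the opposite facet, and evaluate the pyramid's volume as $-\beta_1(x)\vol(T)$. The only (cosmetic) difference is that you describe $\lambda T + (1-\lambda)c$ directly by the inequalities $\beta_i \ge \tfrac{1-\lambda}{4}$ in the barycentric coordinates of $T$, whereas the paper starts from the barycentric coordinates with respect to the enlarged tetrahedron and converts.
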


\begin{proof}
Consider a point $x \in \R^3$ not belonging to $\lambda T + (1 - \lambda) c$. We show that then
one has $\vol\big(\conv(T \cup \{x\})\big) > t$. Let $\beta_1, \ldots, \beta_4$ be the barycentric coordinates of
$x$ with respect to the vertices of $\lambda T + (1 - \lambda) c$. It is straightforward to verify that the barycentric coordinates
of $x$ with respect to the vertices of $T$ are $\beta_1 + \frac{1 - \lambda}{4}, \ldots, \beta_4 + \frac{1 - \lambda}{4}$. Since
$x \not\in \lambda T + (1 - \lambda) c$, at least one of $\beta_1, \ldots, \beta_4$, say $\beta_1$, is negative and hence, the corresponding barycentric
coordinate of $x$ with respect to $T$ is $\gamma_1 := \beta_1 + \frac{1 - \lambda}{4} < \frac{1 - \lambda}{4} = - \frac{t}{\vol(T)} + 1 < 0$.

Let $v_1$ be the vertex of $T$ associated with this barycentric coordinate and let $F := \conv(\vertset{T} \setminus \{v_1\})$.
Let $d_{v_1}$ denote the distance of $v_1$ from $H := \aff(\vertset{T} \setminus \{v_1\})$ and let $d_x$ denote the distance of $x$ from $H$.
Then from the geometric interpretation of
the barycentric coordinates, one has $d_x = |\gamma_1|d_{v_1} = - \gamma_1 d_{v_1}$. 
Likewise, it follows from the geometric interpretation and
the fact that $\gamma_1 < 0$ that $T$ and $\conv(F \cup \{x\})$ do not have common interior points.
In view of $- \gamma_1 < 1 - \frac{t}{\vol(T)}$, this yields
the following chain of inequalities:
\begin{align*}
\vol\big(\conv(T \cup \{x\})\big) 
&\ge \vol(T) + \vol\big(\conv(F \cup \{x\})\big)\\ &= \vol(T) - \gamma_1\vol\big(\conv(F \cup \{v_1\})\big)\\ 
& = (1 - \gamma_1)\vol(T) > \vol(T) \big(1 + \frac{t}{\vol(T)} - 1\big) = t,
\end{align*}
which proves the statement.
\end{proof}
Thus, if~$ c $ denotes the barycenter of the vertices of~$ \fixedPyramid $, each vertex of~$ P $ is contained in 
\[
    \safeRegion{\fixedPyramid} := \setcond{(x_1,x_2,x_3) \in \big( \lambda \cdotp \fixedPyramid + (1-\lambda) \cdotp
    c \big) \cap \Z^3}{0 \le x_3 \le h },
\]
where~$ \lambda := 4 \cdotp \left( \tfrac{27}{\vol(\fixedPyramid)} - 1 \right) + 1 $.
We call $\safeRegion{\fixedPyramid}$ the {\em search region} for $T$.
Similarly to the previous step, for each vertex~$ v $ of~$ P $ we have that~$ \conv(\fixedPyramid \cup \{v\}) $ is
lattice-free and its lattice diameter is bounded by~$ \fixedLatticeDiameter $.
Hence, a valid choice for the set~$ \candidates{\fixedPyramid} $ is
\begin{align*}
    \candidates{\fixedPyramid} := \{ v \in \safeRegion{\fixedPyramid} \setcondsep
        \ & \conv(\fixedPyramid \cup \{v\}) \text{ is lattice free}, \\
        & \ld{\conv(\fixedPyramid \cup \{v\})} = \fixedLatticeDiameter \}.
\end{align*}
For the enumeration of $\candidates{\fixedPyramid}$, we enumerate $\safeRegion{\fixedPyramid}$ and check the defining conditions for $\candidates{\fixedPyramid}$.
Since for most choices of $\fixedPyramid$, the set $\safeRegion{\fixedPyramid}$ is huge, this is the most time-consuming step. Therefore, in order to reduce the
running time, we define $\candidates{\fixedPyramid}$ without use of the successive minima. At the end of Step 3, $\candidates{\fixedPyramid}$ is stored
as a list of points and available for further computations.

\subsubsection*{Step 4: Combining the candidate vertices}
In the final step, for each fixed~$ \fixedLatticeDiameter $ and~$ \fixedPyramid $ it remains to enumerate all lattice-free polytopes
with lattice diameter $\ell$ and lattice width at least three which can be obtained by taking the convex hull of $T$ and a set of
elements of $\candidates{\fixedPyramid}$, i.e., all polytopes
in the set
\[
    \computerFinalSet := \setcond{P \in \cP(\Z^3)}{\vertset{P} \subseteq \candidates{\fixedPyramid}
, \, T \subseteq P, \, P \text{ lattice-free}, \, \ld{P} =
\fixedLatticeDiameter, \, \lw{P} \ge 3}.
\]
This is done in the following way.
Let~$ v_1,\dotsc,v_k $ be the points of~$ \candidates{\fixedPyramid}$.
We construct $\computerFinalSet$ iteratively by listing the elements of sets $\computerIntermediateSet{0}, \ldots, \computerIntermediateSet{k}$, where 
$\computerIntermediateSet{0} = \{T\}$ and $\computerIntermediateSet{k} = \computerFinalSet$. The set $\computerIntermediateSet{i}$ is constructed
from $\computerIntermediateSet{i-1}$ in a way that for every $i \in \{0, \ldots, k\}$, $\computerIntermediateSet{i}$ is the set of all polytopes
in $\cP(\Z^3)$ satisfying the following conditions:
\begin{align}
 \vertset{P} \subseteq \vertset{T} \cup \{v_1,\dotsc,v_i\}, \label{eq:set_i_first}\\
 T \subseteq P, \, P \text{ is lattice-free}, \, \ld{P} = \fixedLatticeDiameter, \label{eq:set_i_second}\\
 \lw{P \cup \{v_{i+1},\dotsc,v_k\}} \ge 3. \label{eq:set_i_third}
\end{align}
Obviously one has~$ \computerIntermediateSet{0} := \{T\} $ and can
compute $\computerIntermediateSet{i}$ as the union of the set of all polytopes of $\computerIntermediateSet{i-1}$ 
satisfying \eqref {eq:set_i_third} and the set of all polytopes $P = \conv(P' \cup \{v_i\})$, where $P' \in \computerIntermediateSet{i-1}$, satisfying
conditions \eqref{eq:set_i_second} and \eqref{eq:set_i_third}.
Only the third condition needs explaining: this is to ensure that for our preliminary polytope `built' from $\fixedPyramid$ and the first $i$ elements of  $\candidates{\fixedPyramid}$, which might not have lattice width three, it is still possible to obtain a polytope of lattice width at least three by `adding' points of $\candidates{\fixedPyramid}$ which have not yet been considered. If this is not possible, there is no need to consider this polytope any further.
Observe that we have~$ \computerFinalSet = \computerIntermediateSet{k} $ and that each~$
\computerIntermediateSet{i} $ can be recursively computed for $i \in \{1, \ldots, k\}$. 

With an arbitrary choice of the sequence~$ v_1, \ldots, v_k $, the cardinalities of some of the sets $ \computerIntermediateSet{1}, \ldots, \computerIntermediateSet{k} $
can be so large that a computation becomes practically impossible.
However, this can be avoided by sorting the~$ v_j $'s decreasingly according to the absolute value of their second
coordinates.
The idea behind this order is that, for most~$ \fixedPyramid $, the majority of points in~$ \candidates{\fixedPyramid} $
(including the vertices of $ \fixedPyramid $) have a second coordinate with small absolute value.
Thus, in order to arrive at a polytope with lattice width at least three, at least one of the~$ v_j $'s appearing early in the sorted list
has to belong to~$ P $.
Due to condition~\eqref{eq:set_i_third}, any polytope not containing one of the first~$ v_j $'s is
then discarded from the sets~$ \computerIntermediateSet{i} $ for already small~$ i $.
Furthermore, for any computed polytope containing one of the~$ v_j $'s from the beginning of the sorted list, the number of further candidates
that can be added to arrive at a lattice-free polytope turns out to be very small.

In the described procedure, for all constructed polytopes we need to decide whether their lattice width is at least
three.
This is done by simply computing the width with respect to a few explicit directions; more specifically, the directions in the set $V$ defined in
the following lemma.

\begin{lemma}
    Let~$ P \subseteq \R^3 $ be a polytope containing~$ o, e_1, e_2 $ and some point~$ a = (a_1, a_2, h) \in \Z^3 $ with
    $ h \ge 1 $.
    Then~$ \lw{P} \ge 3 $ holds if and only if
    \begin{equation}
        \label{eqCondLatticeWidthAtLeastThree}
        \width{P,v} \ge 3
    \end{equation}
    holds for all~$ v $ in the set
    \begin{align*}
        V := \big\{(v_1, v_2, v_3) \in \Z^3 \setminus \{o\} \setcondsep \ & |v_1|, |v_2|, |v_1 - v_2| \le 2, \\
        & \tfrac{1}{h}(-2 - v_1 a_1 - v_2 a_2) \le v_3 \le \tfrac{1}{h}(2 - v_1 a_1 - v_2 a_2) \big\}.
    \end{align*}
\end{lemma}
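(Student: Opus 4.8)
The plan is to reduce the infimum over all of $\Z^3 \setminus \{o\}$ in the definition $\lw{P} = \inf_{z \in \Z^3 \setminus \{o\}} \width{P,z}$ to the finite set $V$, exploiting that $P$ contains the four explicit points $o, e_1, e_2$ and $a$. The ``only if'' direction is immediate: since $V \subseteq \Z^3 \setminus \{o\}$, the assumption $\lw{P} \ge 3$ gives $\width{P,v} \ge 3$ for every $v \in V$ at once.

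For the converse I would argue by contradiction. Suppose $\width{P,v} \ge 3$ holds for all $v \in V$, but nonetheless $\lw{P} < 3$, so that some $z = (z_1,z_2,z_3) \in \Z^3 \setminus \{o\}$ satisfies $\width{P,z} < 3$. The key observation is that the four points $o, e_1, e_2, a$ lie in $P$, so their images under the functional $\sprod{z}{\cdot}$ --- namely the integers $0$, $z_1$, $z_2$ and $\sprod{z}{a} = z_1 a_1 + z_2 a_2 + z_3 h$ --- all belong to the interval $\bigl[\inf_{y \in P}\sprod{z}{y}, \, \sup_{x \in P}\sprod{z}{x}\bigr]$, whose length is precisely $\width{P,z} < 3$.

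Since these four quantities are integers confined to an interval of length strictly less than $3$, any two of them differ by at most $2$. In particular, $|z_1| \le 2$, $|z_2| \le 2$, $|z_1 - z_2| \le 2$ and $|\sprod{z}{a}| \le 2$. The first three are exactly the inequalities $|v_1|,|v_2|,|v_1 - v_2| \le 2$ in the definition of $V$, while the last, after substituting $\sprod{z}{a} = z_1 a_1 + z_2 a_2 + z_3 h$ and solving for $z_3$ (which is legitimate since $h \ge 1 > 0$), is exactly the two-sided bound $\tfrac{1}{h}(-2 - z_1 a_1 - z_2 a_2) \le z_3 \le \tfrac{1}{h}(2 - z_1 a_1 - z_2 a_2)$. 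Hence $z \in V$, and therefore $\width{P,z} \ge 3$ by hypothesis --- contradicting $\width{P,z} < 3$.

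I do not expect a genuine obstacle here. The two points deserving care are the elementary fact that integers lying in an interval of length strictly less than $3$ are pairwise within distance $2$, and the verification that the condition $|\sprod{z}{a}| \le 2$ rearranges precisely into the displayed bounds on $z_3$ that define $V$. Everything else is a direct unwinding of the definitions of the width and lattice width functions.
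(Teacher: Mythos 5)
Your proof is correct and follows essentially the same route as the paper: both arguments evaluate the functional $\sprod{z}{\cdot}$ at the four known points $o,e_1,e_2,a$ of $P$ to force $z$ into $V$ whenever $\width{P,z}<3$. Your explicit remark that the four integer values lie in an interval of length less than $3$ and hence differ pairwise by at most $2$ is in fact a slightly cleaner treatment of the strict inequality than the paper's passage from $\lw{P}<3$ to $\lw{P}\le 2$.
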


\begin{proof}
    If $\lw{P} \ge 3$, then by definition of the lattice width, \eqref{eqCondLatticeWidthAtLeastThree} holds for every
    $v \in  \Z^3 \setminus \{o\}$ and hence in particular for all $v \in V$.
    To show the reverse implication, it suffices to show that if~$ \lw{P} \le 2 $ holds, then there exists some~$ v \in
    V $ violating~\eqref{eqCondLatticeWidthAtLeastThree}.
    Suppose we have~$ \lw{P} \le 2 $.
    Then there exists a (primitive) vector~$ v^* = (v_1^*, v_2^*, v_3^*) \in \Z^3 \setminus \{ o \} $ with
    \begin{align*}
	2 \ge \width{P,v^*} \ge
\width{\{o, e_1, e_2\},v^*}
        = \max \{ |v^*_1|, \, |v^*_2|, \, |v^*_1 - v^*_2| \},
    \end{align*}
    where the second inequality follows from the fact that we have~$ o,e_1,e_2 \in P $.
    Finally, since we have~$ o,a \in P $, we must have
    \[
        | \langle v^*,a \rangle - \langle v^*, o \rangle | \le 2,
    \]
    which is equivalent to
    \[
        -2 \le v_1^* a_1 + v_2^* a_2 + v_3^* h \le 2,
    \]
    as claimed.
\end{proof}

Finally, having computed the set~$ \computerIntermediateSet{k} $, for each polytope~$ P \in
\computerIntermediateSet{k} $ we heuristically search for a certificate showing that $P$ is not $\Z^3$-maximal.
This is done by testing for a few points~$ p \in \Z^3 \setminus P $ in a certain neighborhood of $P$ if $ \conv(P \cup \{p\}) $ is still
lattice-free.
If this check fails for all considered choices of~$ p $, the polytope $ P $ is regarded as being potentially~$ \Z^3 $-maximal and we then
test whether~$ P $ is even $ \R^3 $-maximal.
It turns out that~$ P $ is indeed $ \R^3 $-maximal in all these cases and hence, we obtain the following result.

\begin{proposition}\label{prop:lattice_width_larger}
 Let $P \in \cP(\Z^3)$ be a lattice-free polytope with $\lw{P} \ge 3$. Then, $P$ is $\Z^3$-maximal if and only if $P$ is $\R^3$-maximal.
\end{proposition}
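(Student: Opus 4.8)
The plan is to establish the two implications separately, since one of them is essentially free. The direction ``$P$ is $\R^3$-maximal $\Rightarrow$ $P$ is $\Z^3$-maximal'' follows immediately from the definitions: because $\Z^3 \subseteq \R^3$, the quantifier defining $\R^3$-maximality ranges over a superset of the points relevant to $\Z^3$-maximality, so $\R^3$-maximality is the stronger property. The entire content therefore lies in the converse, and the strategy for it is to enumerate, up to unimodular equivalence, every $\Z^3$-maximal lattice-free polytope in the family~\eqref{computerSearchGoal} and to confirm for each that it is $\R^3$-maximal by checking that the relative interior of every facet contains a point of $\Z^3$ (the blocked-facet criterion).

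The enumeration runs along Steps~1--4 assembled above, all of whose bounds come from Proposition~\ref{prop:volume_and_ld}. First I would fix the lattice diameter $\fixedLatticeDiameter := \ld{P} \in \{1,2,3\}$ and, after a unimodular transformation, place a facet of $P$ of maximal lattice diameter into $\R^2 \times \{0\}$, so that $P$ contains one of the explicit base triangles $\fixedBase$ of~\eqref{eqFixedBase}. Next I would inscribe the tetrahedron $\fixedPyramid := \conv(\fixedBase \cup \{\fixedApex\})$ with an apex $\fixedApex = (a_1,a_2,h)$ of maximal height: the simplex difference-body identity $\binom{6}{3}\vol(\fixedPyramid) = \vol(\fixedPyramid - \fixedPyramid)$, combined with $\vol(P-P) \le 8 \cdot 27$ and the sharper bound $h \le 12$ in the case $\fixedLatticeDiameter = 1$, confines $h$ to a finite range, while the normalization $0 \le a_1, a_2 \le h-1$ leaves finitely many apices. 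Discarding those for which $\fixedPyramid$ fails to be lattice-free, to have $\ld{\fixedPyramid} = \fixedLatticeDiameter$, or to satisfy $\lambda_1(\fixedPyramid - \fixedPyramid) > \tfrac{1}{4}$ (all necessary because $\fixedPyramid \subseteq P$) reduces the list to the $69$ admissible pyramids.

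With $\fixedPyramid$ fixed, I would collect all candidate vertices. Since $\vol(\conv(\fixedPyramid \cup \{v\})) \le \vol(P) \le 27$ for every vertex $v$ of $P$, the homothety lemma places every such $v$ in the finite search region $\safeRegion{\fixedPyramid}$; filtering further by lattice-freeness and lattice diameter of $\conv(\fixedPyramid \cup \{v\})$ produces $\candidates{\fixedPyramid}$, which by construction satisfies $\vertset{P} \subseteq \candidates{\fixedPyramid}$. Finally I would generate, through the incremental sets $\computerIntermediateSet{0}, \ldots, \computerIntermediateSet{k} = \computerFinalSet$, all lattice-free polytopes with $\fixedPyramid \subseteq P$, $\ld{P} = \fixedLatticeDiameter$ and $\lw{P} \ge 3$ whose vertices lie in $\candidates{\fixedPyramid}$, deciding the lattice width of each polytope by testing only the finitely many directions in the explicit set $V$ of the last lemma. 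Every polytope surviving this process is then tested for $\R^3$-maximality via blocked facets, and the computation reports that all of them pass.

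The crux is not any isolated geometric estimate but the \emph{soundness and feasibility} of the search taken as a whole. For soundness one must be certain that every $\Z^3$-maximal polytope of~\eqref{computerSearchGoal} arises, up to unimodular equivalence, as some $P \in \computerFinalSet$ for a suitable $\fixedLatticeDiameter$ and $\fixedPyramid$; equivalently, that $\vertset{P} \subseteq \candidates{\fixedPyramid}$ holds in every case, which is precisely what the reduction in Step~1, the size bounds, and the homothety lemma guarantee, while the direction set $V$ certifies the width test. For feasibility, the naive Step~4 enumeration is far too large, so the pruning by condition~\eqref{eq:set_i_third} --- abandoning any partial polytope that can no longer be extended to lattice width at least three --- is indispensable, and one must verify that this pruning never discards a polytope that could eventually have reached lattice width at least three. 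Ordering the candidate vertices by decreasing absolute value of their second coordinate keeps the intermediate sets $\computerIntermediateSet{i}$ small and is what makes the whole search terminate in practice.
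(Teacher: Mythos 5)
Your proposal is correct and follows essentially the same route as the paper: the trivial implication from the definitions, followed by the four-step computer search (boundary fragment, inscribed pyramid with the difference-body volume bound, candidate vertices via the homothety lemma, incremental combination with the width test over the finite direction set $V$), concluding with the blocked-facet check for $\R^3$-maximality. The soundness points you single out --- that every $\Z^3$-maximal $P$ with $\lw{P}\ge 3$ is reached up to unimodular equivalence, and that the pruning by condition~\eqref{eq:set_i_third} is conservative --- are exactly the ones the paper's argument rests on.
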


Combining this with the results of the previous section, we obtain the proof of Theorem~\ref{thm:main_three}.

\begin{proof}[Proof of Theorem~\ref{thm:main_three}.]
 Let $P \in \cP(\Z^3)$ be lattice-free. If $P$ is $\R^3$-maximal, then it is also $\Z^3$-maximal by definition of $\Z^3$-maximality.
Conversely, if $P$ is $\Z^3$-maximal and bounded, then in
view of Propositions~\ref{prop:lattice_width_two} and \ref{prop:lattice_width_larger} we have that $P$ is also $\R^3$-maximal.
If $P$ is $\Z^3$-maximal and unbounded, then it is unimodularly equivalent 
to $\conv(o, 2e_1, 2e_2) \times \R$ or to
$[0,1] \times \R^2$; see \cite[Proposition 3.1]{MR2855866}. In both cases,
we have that P is also $\R^3$-maximal.
\end{proof}

\begin{remark}\label{rem:two}
Consider again the question asked in Remark~\ref{rem:one}: for which pairs $s,d \in \N$ are $\Z^d$-maximality and $\R^d$-maximality equivalent
for lattice-free polyhedra in $\cP(\frac{1}{s}\Z^d)$?
Theorem~\ref{thm:main_three} shows that this equivalence does hold for $d = 3$ and $s = 1$. 
In Remark~\ref{rem:one}, we stated that equivalence also holds for $d=2$ and $s \in \{1,2\}$,
while it trivially also holds for $d = 1$.
For all other pairs of $d,s \in \N$, this equivalence does not hold: polytopes in $\cP(\frac{1}{s}\Z^d)$
which are $\Z^d$-maximal but not $\R^d$-maximal can be constructed following the ideas of \cite[Proof of Theorem 3.2]{MR2832401}.
\end{remark}

\bibliographystyle{amsalpha}
\bibliography{references}

\section*{Appendix}

This appendix contains an implementation of the algorithm presented in Section~\ref{sec:lw_three}
in Python 2.7.
Executing this code yields Proposition~\ref{prop:lattice_width_larger}.
It is divided into three files, the first of
which contains the main part of the algorithm, while the other two provide basic functions for computations on three-dimensional polytopes
and basic functions and objects needed for geometric computations.

\lstset{language=Python, 
        basicstyle=\ttm, 
        keywordstyle=\ttb,
        commentstyle=\tti,
        stringstyle=\color{gray},
	breakatwhitespace=true,
breaklines=true,
  numbers=left,                    
  numbersep=5pt,                   
  numberstyle=\tiny\color{gray}, 
        showstringspaces=false,
rulecolor=\color{black}, 
        identifierstyle=\ttm,
        procnamekeys={def,class}}

\subsection*{search.py}

\lstinputlisting{scripts/search.py}

\subsection*{polytopes.py}

\lstinputlisting{scripts/polytopes.py}

\subsection*{scripts.py}

\lstinputlisting{scripts/basics.py}

\end{document}